\newcommand{\Pin}{\mathsf{Pin}} 
\newcommand{\Poset}{\mathcal{P}} 
\newcommand{\pinn}{\mathsf{pin}} 
\theoremstyle{plain}
\newtheorem{theorem}{Theorem}[section]
\newtheorem{question}{Question}[section]
\newtheorem{corollary}{Corollary}[theorem]
\newtheorem{lemma}[theorem]{Lemma}
\newtheorem{proposition}[theorem]{Proposition}
\theoremstyle{definition}
\newtheorem{example}[theorem]{Example}
\newtheorem{definition}[theorem]{Definition}
\newtheorem{remark}[theorem]{Remark}
\title{The Pinnacle Sets of a Graph}
\author[Bozeman]{Chassidy Bozeman}
\address[C. Bozeman]{Department of Mathematics and Statistics, Mount Holyoke College, 50 College St, South Hadley, MA 01075}
\email{
\textcolor{blue}{
\href{mailto:cbozeman@mtholyoke.edu}{cbozeman@mtholyoke.edu}
}
}
\author[Cheng]{Christine Cheng}
\address[C.~Cheng]{Department of Computer Science, University of Wisconsin-Milwaukee, WI 53211}
\email{
\textcolor{blue}{
\href{mailto:ccheng@uwm.edu}{ccheng@uwm.edu}
}
}
\author[Harris]{Pamela E. Harris}
\address[P.~E. Harris]{Department of Mathematical Sciences, University of Wisconsin-Milwaukee, WI 53211}
\email{
\textcolor{blue}{
\href{mailto:peharris@uwm.edu}{peharris@uwm.edu}
}
}
\author[S.~Lasinis]{Stephen Lasinis}
\address[S. Lasinis]{Department of Mathematical Sciences, University of Wisconsin-Milwaukee, WI 53211}
\email{
\textcolor{blue}{
\href{mailto:slasinis@uwm.edu}{slasinis@uwm.edu}
}
}
\author[Walker]{Shanise Walker}
\address[S. Walker]{Department of Mathematical Sciences, Clark Atlanta University, Atlanta, GA 30314}
\email{
\textcolor{blue}{
\href{mailto:swalker@cau.edu}{swalker@cau.edu}
}
}
\keywords{Graph labeling, pinnacle, pinnacle set, independent set, poset, path, cycle}
\subjclass[2020]{05C30, 05C78, 05C38, 06A06, 06A07}
\begin{document}

    \begin{abstract}
        We introduce and study the pinnacle sets of a simple graph $G$ with $n$ vertices. Given a bijective vertex labeling $\lambda\,:\,V(G)\rightarrow [n]$, 
        the label $\lambda(v)$ of vertex $v$ is a \textit{pinnacle} of $(G, \lambda)$ if $\lambda(v)>\lambda(w)$ for all vertices $w$ in the neighborhood of $v$. 
        The {\it pinnacle set of $(G, \lambda)$} contains all the pinnacles of the labeled graph. A subset $S\subseteq[n]$ is a {\it pinnacle set of $G$} if there exists a labeling $\lambda$ such that $S$ is the pinnacle set of $(G,\lambda)$.  Of interest to us is the question: {\it Which subsets of $[n]$ are the pinnacle sets of $G$?}
        
        Our main results are as follows. We show that when $G$ is connected, $G$ has a size-$k$ pinnacle set if and only if $G$ has an independent set of the same size.  Consequently, determining if $G$ has a size-$k$ pinnacle set and determining if $G$ has a particular subset $S$ as a pinnacle set are NP-complete problems.  Nonetheless, we completely identify all the pinnacle sets of complete graphs, complete bipartite graphs, cycles and paths. We also present two techniques for deriving new pinnacle sets from old ones that imply a typical graph has many pinnacle sets.  Finally, we define a poset on all the size-$k$ pinnacle sets of $G$ and show that it is a join semilattice.  If, additionally, the poset has a minimum element, then it is a distributive lattice. We conclude with some open problems for further study.
\end{abstract}

    \maketitle

\section{Introduction}

Graph labelings like proper colorings, graceful labelings, and distinguishing labelings have a long history in graph theory \cite{
albertson96distlabeling, bagga2015new, biatch2020survey, bloom1985graceful, 
cheng2006computing, 
edwards2006survey,
gallian2018dynamic,
golomb1972number,
robeva2011extensive, 
shivarajkumar2021graceful,
truszczynski1984graceful}.  In those settings, the vertices of a graph are labeled to achieve a certain goal.  In this paper, we study graph labelings to understand which labels can become prominent.  We refer to them as {\it pinnacles}.

Let $G$ be a graph with $n$ vertices and $[n] = \{1, 2, \hdots, n\}$.  A {\it (vertex) labeling} of $G$ is a function $\lambda: V(G) \rightarrow [n]$ that assigns distinct  labels to the vertices of $G$ from $[n]$. 
The label of vertex $u$, $\lambda(u)$,  is a {\it pinnacle} of  $(G, \lambda)$ if for any neighbor $v$ of $u$, $\lambda(u) > \lambda(v)$.  The {\it pinnacle set of $(G,\lambda)$} contains all the pinnacles of the labeled graph, and we denote this set by $\Pin(G,\lambda)$.
Figure~\ref{fig:pin example on graph} illustrates two distinct labelings of a graph with pinnacle sets $\{4,5\}$ and $\{5\}$. 
A graph has many labelings, and each one has their own pinnacle set.  We shall say that  $S \subseteq [n]$ is a {\it pinnacle set of $G$}  if there exists a labeling $\lambda$ of $G$ for which $S$ is the pinnacle set of $(G, \lambda)$. 
Of interest to us is the following question: {\it Given graph $G$, what are the pinnacle sets of $G$?}  

    \begin{figure}[h!]
    \centering
    \begin{tikzpicture}[roundnode/.style={circle, draw=black,  minimum size=.5mm}]
\node at (1.5,-1){$\Pin(G,\lambda_1)=\{4,5\}$};
\node[roundnode,fill=green!20](A1)at(0,0){$5$};
\node[roundnode,fill=white](A2)at(1.5,0){$2$};
\node[roundnode,fill=white](A3)at(3,0){$3$};
\node[roundnode,fill=white](A4)at(1.5,1.5){$1$};
\node[roundnode,fill=green!20](A5)at(3,1.5){$4$};

\draw (A1)--(A2)--(A3)--(A5)--(A4)--(A2)--(A5);
\end{tikzpicture}
\qquad\qquad\qquad
\begin{tikzpicture}[roundnode/.style={circle, draw=black,  minimum size=.5mm}]
\node at (1.5,-1){$\Pin(G,\lambda_2)=\{5\}$};
\node[roundnode](A1)at(0,0){$1$};
\node[roundnode,fill=white](A2)at(1.5,0){$3$};
\node[roundnode,fill=green!20](A3)at(3,0){$5$};
\node[roundnode,fill=white](A4)at(1.5,1.5){$2$};
\node[roundnode](A5)at(3,1.5){$4$};

\draw (A1)--(A2)--(A3)--(A5)--(A4)--(A2)--(A5);
\end{tikzpicture}
    \caption{Two graph labelings on $G$ with  pinnacles highlighted in green.} 
    \label{fig:pin example on graph}
\end{figure}
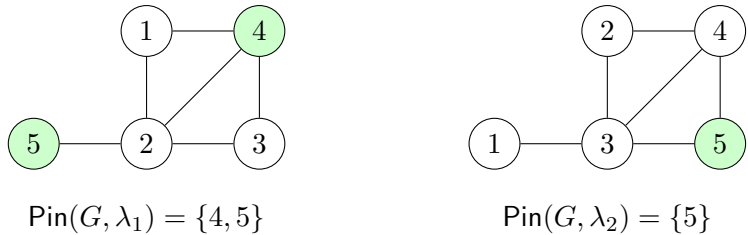

    Davis, Nelson, Petersen, and Tenner~\cite{davis2018pinnacle} were the first ones to introduce and study pinnacles in the context of permutations.
    Given a permutation of $[n]$ written in one line notation as $\pi=\pi_1\pi_2\cdots \pi_n$, the value $\pi_i$ is a \textit{pinnacle} of $\pi$ if $\pi_{i-1}<\pi_i>\pi_{i+1}$.  The {\it pinnacle set of $\pi$} contains all the pinnacles of $\pi$.  For example, when $\pi = 236154$, the pinnacle set of $\pi$ is $\{5,6\}$. Davis et al.'s work has now been extended to signed permutations and Stirling permutations in \cite{gonzalez2023pinnacle,mesas}. Additional enumerative results for pinnacle sets can be found in \cite{diaz2021formula,domagalski2022pinnacle,falque2021pinnacle,gonzalez2023pinnacle,rusu2021admissible}. We note that the study of pinnacles of permutations was motivated 
    by the study of \textit{peaks} of permutations, which are about the location of pinnacles and not their relative value. Much work was done in studying peaks of permutations, see \cite{billey2012permutations,castro2013number,DLsignedperms2017,peakpolyproof2017}, and Diaz-Lopez, Everham, Harris, Insko, Marcantonio, and Omar studied peaks on graphs in \cite{diaz2017peaks}.

    Every permutation $\pi$ of $[n]$ can be thought of as a labeling of the path $P_n$.  In our work,  we generalize the concept of pinnacles to graphs by replacing $P_n$ and $\pi$ with an arbitrary graph and a labeling of the graph.  We also make a small change -- in Davis et al.'s definition of pinnacles neither $\pi_1$ nor $\pi_n$ can be pinnacles because $\pi_0$ and $\pi_{n+1}$ do not exist; our definition has no such restrictions.  For example, the label of a degree-$0$ vertex is allowed to be a pinnacle because it satisfies the definition vacuously, and unlike in Davis et al.'s definition, our definition also allows pinnacles at leaves. As we shall see, the pinnacle sets of graphs have interesting connections with other concepts in graph theory.  They also behave nicely and have an orderly structure.
\bigskip

    \noindent {\it Our results.} 
    In Section 2, we present preliminary results about pinnacle sets of graphs.  First, we prove that {\it every} non-empty subset of integers with maximum element $n$ is the pinnacle set of some graph with $n$ vertices.  Second, we present a characterization of graphs that have size-$k$ pinnacle sets.  In particular, a connected graph $G$ has a pinnacle set of size $k$ if and only if $G$ has an independent set of the same size.  Third, the connection between pinnacle sets and independent sets allow us to prove that determining if a graph $G$ has a pinnacle set of size at least $k$ and determining if $G$ has a particular subset $S$ as a pinnacle set are both NP-complete problems.  Thus, two natural problems on pinnacle sets of graphs are computationally hard.

    In Section 3, we identify the pinnacle sets of complete graphs, complete bipartite graphs, cycles and paths.  In Section 4, we present two techniques for creating new pinnacle sets from old ones.  The first one involves switching the labels of two vertices.  Remarkably, by just performing a sequence of switches, we prove that if $P = \{p_1, p_2, \hdots, p_k\}$ with $p_1 < p_2 < \cdots< p_k$ is a pinnacle set of a graph $G$ with $n$ vertices and $Q = \{q_1, q_2, \hdots, q_k \}$ with $q_1 < q_2 < \cdots < q_k$ is a subset of $[n]$ such that $p_i \leq q_i$ for $i = 1, 2,\hdots, k$, then $Q$ is also a pinnacle set of $G$.   
    The second technique involves the use of what we call an {\it ordered tree partition} of a graph which allows us to switch the labels of two or more vertices.  
    We prove that if $G$ is connected and $P = \{p_1, p_2, \hdots, p_k\}$ with $p_1 < p_2 < \hdots< p_k$ is a pinnacle set of $G$, then $\{p_i, p_{i+1}, \hdots, p_k\}$ for $i = 2,3, \hdots, k$ are also pinnacle sets of $G$.  Both results suggest that an arbitrary graph will have many pinnacle sets. 

    In Section 5, we define a partial order (poset) on all the size-$k$ pinnacle sets of a graph.  We prove that the poset is a join semilattice, and if the poset has a minimum element, it is actually a distributive lattice.  We then present some scenarios when a graph has a minimum size-$k$ pinnacle set.  In such cases, the set of all size-$k$ pinnacle sets of the graph can be described succinctly.  We end in Section 6 with a conclusion and many suggestions for future work.

    \section{Pinnacles of a graph}\label{sec:pinancles of a graph}

In this section, we explore which subsets of $[n]$ can be pinnacle sets of a graph with $n$ vertices. We begin by establishing that every non-empty set of positive integers can arise as a set of pinnacles of a graph.

\begin{figure}
    \centering
    \begin{tikzpicture}[roundnode/.style={circle, draw=black, inner sep=0pt,  minimum size=5mm}]
    \draw[line width=0.1mm];
    \node[roundnode, fill=white](a1) at (0,0) {2};
    \node[roundnode, fill=white](a2) at (0,1) {3};
    \node[roundnode, fill=white](a3) at (0,2) {4};
    \node[roundnode, fill=green!20](a4) at (0,3) {5};
    \node[roundnode, fill=green!20] at (-1,0) {1};
    \node[roundnode, fill=green!20] at (1,0) {6};
    \node[roundnode, fill=white](b1) at (2,0) {7};
    \node[roundnode, fill=green!20](b2) at (2,1) {8};
    \draw (a1)--(a2)--(a3)--(a4);
    \draw (b1)--(b2);
\end{tikzpicture}
\qquad\qquad
    \begin{tikzpicture}[roundnode/.style={circle, draw=black, inner sep=0pt,  minimum size=5mm}]
    \draw[line width=0.1mm];
    \node[roundnode, fill=white](a1) at (0,0) {1};
    \node[roundnode, fill=green!20](c1) at (-1,.5) {3};
    \node[roundnode, fill=green!20](c2) at (-1,-.5) {5};
    \node[roundnode, fill=green!20](a2) at (1,0) {9};
    \node[roundnode, fill=white](a3) at (2,1.5) {2};
    \node[roundnode, fill=white](a4) at (2,.75) {4};
    \node[roundnode, fill=white](a5) at (2,0) {6};
    \node[roundnode, fill=white](a6) at (2,-.75) {7};
    \node[roundnode, fill=white](a7) at (2,-1.5) {8};
    \draw (a1)--(a2)--(a3);
    \draw (a1)--(c1);
       \draw (a1)--(c2);
          \draw (a2)--(a4);
          \draw (a2)--(a5);
          \draw (a2)--(a6);
          \draw (a2)--(a7);
\end{tikzpicture}

    \caption{The graphs above were created using the constructions described in the proof of Theorem \ref{thm:Construct graphs from pinnacle set}. The graph on the left has $S = \{1, 5, 6, 8\}$ as its pinnacle set. It is not connected because $1 \in S$ and $|S| \ge 2$.  The graph on the right has $S' = \{3, 5, 9\}$ as its pinnacle set.  The graph is a tree and this is possible because $1 \not \in S'$.}
    \label{fig:enter-label}
\end{figure}
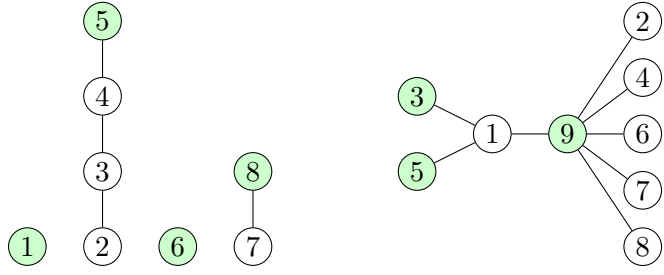

\begin{theorem}\label{thm:Construct graphs from pinnacle set}
    Let $S$ be a non-empty set of positive integers whose largest element is $n$. Then there is a graph $G$ with $n$ vertices so that $S$ is a pinnacle set of $G$. Furthermore, when $1 \not \in S$ or $|S| =1$,  $G$ can be a tree.  Otherwise, when $1 \in S$ and $|S| \ge 2$, $G$ cannot be a connected graph.  
\end{theorem}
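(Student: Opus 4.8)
The plan is to give an explicit construction. Write $S=\{s_1<s_2<\cdots<s_k\}$ with $s_k=n$, set $s_0=0$, and let $A_i=\{s_{i-1}+1,\dots,s_i\}$ for $1\le i\le k$, so that $A_1,\dots,A_k$ partition $[n]$ with $\max A_i=s_i$. For each $i$, let $P^{(i)}$ be the path (a single vertex when $|A_i|=1$) whose vertices carry the labels of $A_i$ in increasing order along the path. Since the labels are monotone along $P^{(i)}$, every vertex except the one labeled $s_i$ has a larger-labeled neighbor, while the vertex labeled $s_i$ has only smaller-labeled neighbors; hence the pinnacle set of $P^{(i)}$ is $\{s_i\}$. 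Taking $G$ to be the disjoint union $P^{(1)}\sqcup\cdots\sqcup P^{(k)}$ gives a graph on $n$ vertices whose pinnacle set is $\bigcup_i\{s_i\}=S$, which proves the first assertion.

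For the case $1\in S$ with $|S|\ge 2$, I would first observe that in \emph{any} labeling realizing $S$ the vertex $u$ labeled $1$ is forced to be isolated: a pinnacle at $u$ would require every neighbor of $u$ to carry a label smaller than $1$, of which there are none. Since $|S|\ge 2$ forces $n\ge 2$, a realizing graph then has at least two vertices and an isolated vertex, hence is disconnected; in particular no tree works. The forest constructed above does realize $S$ in this case, since here $s_1=1$ and so $P^{(1)}$ is precisely the required isolated vertex.

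For the case $1\notin S$ or $|S|=1$, I would modify the forest into a tree. If $k=1$ then $P^{(1)}$ is already a path, hence a tree. If $k\ge 2$ then $1\notin S$, so $s_1\ge 2$, $A_1=\{1,\dots,s_1\}$ has at least two elements, and $P^{(1)}$ contains a vertex $x$ labeled $1$ whose $P^{(1)}$-neighbor is labeled $2$. I would then add, for each $i\in\{2,\dots,k\}$, an edge from $x$ to the endpoint of $P^{(i)}$ labeled $s_{i-1}+1$. The resulting graph is connected and has $(n-k)+(k-1)=n-1$ edges, hence is a tree. It remains to check that the new edges neither create nor destroy pinnacles: $x$ only acquires neighbors with labels $\ge 2$, so it stays a non-pinnacle; each newly attached endpoint labeled $s_{i-1}+1$ either still has its larger $P^{(i)}$-neighbor (when $|A_i|\ge 2$), or is a singleton labeled $s_i>1$ now adjacent only to $x$ (label $1$), so in both subcases its pinnacle status is unchanged; and every other vertex keeps its neighborhood. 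Hence the pinnacle set is still $S$.

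The constructions are routine; the one place that needs care is the last verification — that splicing the path components together at the vertex labeled $1$ leaves the pinnacle set untouched. This is exactly why the vertex labeled $1$ (and not some other small label) must serve as the hub, and why this splicing trick is unavailable — as it must be — precisely when $1\in S$.
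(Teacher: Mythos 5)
Your proof is correct. The forest construction and the impossibility argument for $1\in S$, $|S|\ge 2$ are essentially identical to the paper's (the paper likewise builds $k$ increasingly-labeled paths and observes that a connected graph on $\ge 2$ vertices forces the vertex labeled $1$ to have a larger-labeled neighbor). Where you genuinely diverge is the tree construction for $1\notin S$: the paper discards the forest and builds a ``double star'' from scratch --- two adjacent hubs $x$ (labeled $1$, with leaves labeled $s_1,\dots,s_{k-1}$) and $y$ (labeled $n$, with leaves carrying the remaining non-pinnacle labels) --- whereas you keep the forest and splice its components together by joining the vertex labeled $1$ to the low endpoint of each other path. Both are valid; the paper's double star makes the pinnacle verification almost immediate (every leaf of $x$ sees only label $1$, every leaf of $y$ sees label $n$), while your splicing has the advantage of producing the tree by a local modification of the already-verified forest, so the only thing to check is that the new edges are incident to label $1$ and to vertices whose pinnacle status is insensitive to gaining a label-$1$ neighbor --- a check you carry out correctly, including the singleton-component subcase $|A_i|=1$. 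Your closing remark that the hub must be the vertex labeled $1$, and that this is exactly what fails when $1\in S$, is a nice structural observation that the paper's separate construction obscures.
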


\begin{proof}
    Let $S = \{s_1, s_2, \hdots, s_k\}$ such that $s_1 < s_2 < \cdots < s_k = n$. Let $s_0 = 0$.
    Create $k$ paths $G_1, G_2, \hdots, G_k$ so that $G_i$ has $s_i - s_{i-1}$ vertices.  Let $G$ be the resulting forest; it has $s_k = n$ vertices. For $i = 1, 2, \hdots, k$, moving from one end of $G_i$ to the other end, assign its vertices the labels $s_{i-1} + 1, s_{i-1} + 2, \hdots, s_i$. It is easy to check that $s_i$ is the only pinnacle in $G_i$ so $S$ is the pinnacle set of the labeled graph. We present an example in Figure~\ref{fig:enter-label}.
    
    When $1 \not \in S$ or $|S| =1$, we create a tree that has $S$ as its pinnacle set.  First, assume $1 \not \in S$ so $s_1 > 1$.
   Starting with two adjacent vertices $x$ and $y$,  attach $k-1$ leaves to $x$  and $n-k-1$ leaves to $y$. Call the resulting tree $T$; it has $2 + k-1 + n-k-1 = n$ vertices.  Assign $x$ the label 1 and its leaves the labels $s_1, s_2, \hdots, s_{k-1}$. Assign $y$ the label $n$ and its leaves the labels in $[n]\setminus (S \cup \{1\})$.  Again, it is easy to check that $s_1, s_2, \hdots, s_{k-1}$ and $s_k = n$ are pinnacles of the labeled graph while all the labels in $[n] - S$ are not.  See Figure~\ref{fig:enter-label} for an example.

    When $|S| = 1$, then $S=\{n\}$.  The case when $n > 1$ is addressed by the previous paragraph. We are left with the case when $n = 1$ so $S = \{1\}$.  Then let $T = K_1$ and label the single vertex with $1$.  Clearly, $\{1\}$ is the pinnacle set.  

    Finally, let us prove the last statement in the theorem by contradiction. Assume there is a connected graph $G$ and  a labeling $\lambda$ that has $S$ as its pinnacle set with $1 \in S$ and $|S| \ge 2$.  The latter condition on $S$ implies that $G$ has two or more vertices. Since $G$ is connected, the vertex with label $1$ has a neighbor, and this neighbor's label has to be larger than $1$.  That is, $1 \not \in S$, a contradiction. 
\end{proof}

Our next result establishes a connection between pinnacle sets and independent sets of a graph.
    \begin{proposition}\label{observe1}
         Let $G$ be a graph on $n$ vertices and $\lambda$ be a labeling of $G$.  Let $S$ be the pinnacle set of $(G, \lambda)$ and let $V_S$ contain all the vertices whose labels are in $S$. Then $n \in S$.  Moreover, $V_S$ is an independent set of $G$. 
    \end{proposition}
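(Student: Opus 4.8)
The plan is to verify both assertions directly from the definition of a pinnacle, using nothing more than the fact that $\lambda$ is a bijection from $V(G)$ onto $[n]$.

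First I would establish that $n \in S$. Let $v$ be the unique vertex with $\lambda(v) = n$. For any neighbor $w$ of $v$, injectivity of $\lambda$ forces $\lambda(w) \in [n] \setminus \{n\}$, hence $\lambda(w) < n = \lambda(v)$. By the definition of a pinnacle, $\lambda(v) = n$ is therefore a pinnacle of $(G,\lambda)$, so $n \in S$. (If $v$ happens to be an isolated vertex, the defining inequality holds vacuously, which is consistent with the convention adopted in the introduction, and the conclusion $n \in S$ still stands.)

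Next I would show that $V_S$ is independent by contradiction. Suppose $u, v \in V_S$ are distinct and $uv \in E(G)$. Since the labels are pairwise distinct, we may assume without loss of generality that $\lambda(u) < \lambda(v)$. Then $v$ is a neighbor of $u$ with $\lambda(v) > \lambda(u)$, contradicting the hypothesis that $\lambda(u) \in S$, i.e., that $\lambda(u)$ is a pinnacle of $(G,\lambda)$. Hence no two vertices of $V_S$ are adjacent, so $V_S$ is an independent set of $G$.

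I do not anticipate any real obstacle: both parts are immediate consequences of unwinding the definitions. The only point that merits an explicit remark is the degenerate case of a degree-$0$ vertex, where the pinnacle condition is met vacuously; the arguments above remain valid verbatim in that situation.
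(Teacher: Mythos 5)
Your proposal is correct and follows essentially the same argument as the paper: the vertex labeled $n$ is a pinnacle because $n$ is the maximum label, and two adjacent vertices cannot both carry pinnacle labels since one label must exceed the other. No gaps; the remark about isolated vertices is a harmless extra observation.
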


    \begin{proof}
         Let $z$ be the vertex such that $\lambda(z) = n$.  Since $n$ is the largest label assigned by $\lambda$, we have that $\lambda(z) > \lambda(y)$ for every neighbor $y$ of $z$.  Thus, $n \in S$.
         
         Next, when $V_S$ has only one vertex, $V_S$ is clearly an independent set.  
         So assume $V_S$ has two or more vertices.  
         Let $u, v \in V_S$.  
         Then either $\lambda(u) >  \lambda(v)$ or $\lambda(v) >  \lambda(u)$.  
         If $u$ and $v$ are adjacent, either $\lambda(u)$ or $\lambda(v)$ is not a pinnacle of $(G,\lambda)$, a contradiction.  
         Thus, no two vertices of $V_S$ are adjacent so $V_S$ is an independent set of $G$. 
    \end{proof}

\begin{definition}\label{def:basiclabeling} {\it Basic labeling.}  In our proofs on pinnacle sets, we  apply a certain type of labeling on a subgraph $G'$ of $G$ using labels from a set $L$ with $|L| \ge |V(G')|$ and an independent set $I$ of $G'$ so that every vertex of $G'$ is reachable from some vertex of $I$.   It works as follows: 
    First, partition  the vertices of $G'$ based on their distance from $I$. That is, set $D_0 = I$ and, for ${i \geq  1}$, let $D_i$ consist of the neighbors of the vertices in $D_{i-1}$ that are not in $\cup_{j=0}^{i-1} D_j$. At some point, there will be an index $d$ so that $D_d$ is not empty, but $D_{d+1}$ is empty, which implies that  $D_0 \cup D_1 \cup \cdots \cup D_d$  is a partition of the vertices of $G'$.    Then, for $i = 0$ to $d$, label the vertices in $D_i$ with the largest unused labels in $L$.   We shall call the resulting labeling a {\it basic labeling of $(G', L, I)$.}
\end{definition}  

Figure~\ref{basiclabelingexample} illustrates a basic labeling of the Petersen graph, where $I$ consists of the three green vertices and $L = [10]$.
We note that when some $D_i$ has two or more vertices, a basic labeling of $(G', L, I)$  is not unique since we do not specify the exact labeling of the vertices in $D_i$, we just specify that they get the largest unused labels in $L$.  Nonetheless, we can easily determine the pinnacle set of such a labeling.

    \begin{lemma}\label{basiclabelinglemma}
        Let $G'$ be a subgraph of $G$,  $L \subseteq [n]$ a set of labels such that $|L| \ge |V(G')|$ and $I$ an independent set  of $G'$ so that every vertex of $G'$ is reachable from some vertex of $I$.  Let  $\lambda$ be a basic labeling of $(G', L, I)$.  Then the pinnacle set of $(G', \lambda)$  consists of the largest $|I|$ labels in $L$.  
    \end{lemma}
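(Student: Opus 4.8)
The plan is a direct argument that simply unwinds the layered structure $D_0, D_1, \dots, D_d$ from Definition~\ref{def:basiclabeling}. First I would record two facts that are immediate from the construction. \emph{(i)} Since the layers are labeled in the order $D_0, D_1, \dots, D_d$, each time using the largest still-unused labels of $L$, every label placed on a vertex of $D_i$ exceeds every label placed on a vertex of $D_j$ whenever $i < j$; in particular the labels appearing on $D_0 = I$ are exactly the largest $|I|$ labels of $L$. \emph{(ii)} The hypothesis that every vertex of $G'$ is reachable from $I$ guarantees the process terminates with $D_0 \cup D_1 \cup \cdots \cup D_d$ a genuine partition of $V(G')$, so every vertex of $G'$ lies in exactly one layer.

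Next I would show that every vertex of $I$ is a pinnacle of $(G', \lambda)$. Fix $v \in D_0 = I$ and let $u$ be any neighbor of $v$ in $G'$. Because $I$ is independent, $u \notin D_0$, so by \emph{(ii)} we have $u \in D_i$ for some $i \ge 1$, and then \emph{(i)} gives $\lambda(u) < \lambda(v)$. Since $u$ was an arbitrary neighbor, $\lambda(v)$ is a pinnacle.

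Then I would show no other vertex is a pinnacle. Fix $v \in D_i$ with $i \ge 1$. By the definition of $D_i$, the vertex $v$ is a neighbor of some vertex $u \in D_{i-1}$, and by \emph{(i)} we have $\lambda(u) > \lambda(v)$, so $\lambda(v)$ is not a pinnacle. Combining the last two paragraphs, the pinnacle set of $(G', \lambda)$ equals $\{\lambda(v) : v \in D_0\}$, which by \emph{(i)} is the set of the largest $|I|$ labels of $L$, as claimed.

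There is no genuinely hard step here: the lemma is essentially a bookkeeping consequence of the basic-labeling construction. The only points deserving a moment's care are confirming that the layers exhaust $V(G')$ (this is exactly where the reachability hypothesis is used) and being consistent that ``neighbor'' throughout refers to adjacency within $G'$, so that the conclusion is correctly about $\Pin(G', \lambda)$ and not about adjacencies in the ambient graph $G$.
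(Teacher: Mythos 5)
Your proposal is correct and follows essentially the same argument as the paper: labels strictly decrease across the layers $D_0, D_1, \dots, D_d$, so every vertex of $I$ is a pinnacle (using independence of $I$) and every vertex in a later layer has a larger-labeled neighbor in the preceding layer. The only cosmetic difference is that you place a neighbor of $v \in I$ in some $D_i$ with $i \ge 1$, whereas the paper notes it lies specifically in $D_1$; both observations suffice.
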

    
    \begin{proof}
     
        From 
        Definition \ref{def:basiclabeling}, the basic labeling
        $\lambda$ assigned the vertices of $I$ the largest labels in $L$, the vertices of $D_1$ the next largest labels, etc.  
        That is,  as we move from $D_0$ to $D_1$ to $D_2$, etc., the vertices' labels decrease.  Notice that for any vertex $u \in \cup_{i=1}^d D_i$, $\lambda(u)$ cannot be a pinnacle of $(G', \lambda)$ because $u$ has a neighbor in $D_{i-1}$ that was assigned a larger label.  On the other hand, for a vertex $v \in I$, $\lambda(v)$ is a pinnacle of $(G', \lambda)$ because $I$ is an independent set so all the neighbors of $v$ are in $D_1$ and were assigned a smaller label than $v$.  Consequently, the pinnacle set of $(G', \lambda)$ is $\{ \lambda(v): v \in I\}$, which is exactly the set of $|I|$ largest labels in $L$.
    \end{proof}

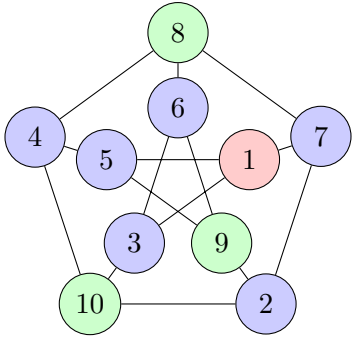
\begin{figure}[t]
    \centering        \begin{tikzpicture}[roundnode/.style={circle, draw=black,  minimum size=.8cm}]
\node[roundnode,fill=green!20](A1)at(0,2){8};
\node[roundnode,fill=blue!20](A2)at(1.9,0.62){7};
\node[roundnode,fill=blue!20](A3)at(1.17,-1.61){2};
\node[roundnode,fill=green!20](A4)at(-1.17,-1.61){10};
\node[roundnode,fill=blue!20](A5)at(-1.9,0.61){4};
\node[roundnode,fill=blue!20](B1)at(0,1){6};
\node[roundnode,fill=pink!80](B2)at(0.95,0.31){1};
\node[roundnode,fill=green!20](B3)at(0.58,-0.8){9};
\node[roundnode,fill=blue!20](B4)at(-0.58,-0.8){3};
\node[roundnode,fill=blue!20](B5)at(-0.95,0.31){5};
\draw(A1)--(A2)--(A3)--(A4)--(A5)--(A1);
\draw(B1)--(B3)--(B5)--(B2)--(B4)--(B1);
\draw(A1)--(B1);
\draw(A2)--(B2);
\draw(A3)--(B3);
\draw(A4)--(B4);
\draw(A5)--(B5);
\end{tikzpicture}
            \caption{A basic labeling of $(G, [10], I)$ where $G$ is the Petersen graph and $I$ consists of the green vertices (labeled 8, 9, 10). Then $D_1$ consists of the vertices in purple (labeled 2 through 7) and $D_2$ consists of the  vertex in pink (labeled 1).}
            \label{basiclabelingexample}
        \end{figure}

We now present a characterization of graphs with size-$k$ pinnacle sets. 
        
    \begin{theorem}\label{bigthm1}
        Let $G$ be a graph with $n$ vertices and $k \in [n]$.  The following statements are equivalent:

        \begin{enumerate}[leftmargin=.2in]
            \item[(1)] $G$ has a pinnacle set of size $k$.
            \item[(2)] The set $M_{n,k} = \{n-k+1, \ldots, n-1, n\}$ is a  pinnacle set of $G$. 
            \item[(3)] $G$ has an independent set $I$ of size $k$ and every vertex of $G$ is reachable from some vertex of $I$. 
        \end{enumerate}
    \end{theorem}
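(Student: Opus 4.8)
The plan is to establish the cycle of implications $(2)\Rightarrow(1)\Rightarrow(3)\Rightarrow(2)$, leaning on the two results already proved in this section. The implication $(2)\Rightarrow(1)$ is immediate: $M_{n,k}$ has size $k$, so if it is a pinnacle set of $G$ then $G$ has a pinnacle set of size $k$.

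For $(3)\Rightarrow(2)$, suppose $G$ has an independent set $I$ of size $k$ such that every vertex of $G$ is reachable from some vertex of $I$. I would simply invoke Lemma~\ref{basiclabelinglemma} with $G' = G$, $L = [n]$, and the given $I$. The hypotheses hold, since $|L| = n = |V(G)|$ and the reachability condition is exactly what the lemma asks for, so a basic labeling $\lambda$ of $(G,[n],I)$ has pinnacle set equal to the $|I| = k$ largest labels of $[n]$, namely $\{n-k+1,\ldots,n\} = M_{n,k}$. Hence $M_{n,k}$ is a pinnacle set of $G$.

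The one substantive step is $(1)\Rightarrow(3)$. Suppose $S$ is the pinnacle set of $(G,\lambda)$ with $|S| = k$, and let $V_S$ be the set of vertices whose labels lie in $S$. By Proposition~\ref{observe1}, $V_S$ is an independent set, and $|V_S| = |S| = k$. It remains to show every vertex of $G$ is reachable from a vertex of $V_S$. Given any vertex $u$, I would construct a walk $u = u_0, u_1, u_2, \ldots$ by the rule: if $\lambda(u_j)\notin S$ then $\lambda(u_j)$ is not a pinnacle, so $u_j$ has a neighbor $u_{j+1}$ with $\lambda(u_{j+1}) > \lambda(u_j)$; we stop as soon as $\lambda(u_j)\in S$. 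Because the labels strictly increase along the walk and there are only $n$ labels available, this process must terminate, necessarily at a vertex of $V_S$. That walk contains a path from $u$ into $V_S$, so $I = V_S$ witnesses statement (3).

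The main obstacle, such as it is, is the reachability claim in $(1)\Rightarrow(3)$; the rest is routine bookkeeping on top of Proposition~\ref{observe1} and Lemma~\ref{basiclabelinglemma}. The ``walk uphill to a local maximum'' monovariant settles it, and the only points requiring care are that the walk really does terminate (forced by the strict increase of labels) and that its last vertex lies in $V_S$ (forced by the stopping condition, since a vertex where the walk cannot be continued has a label that beats all its neighbors, hence is a pinnacle).
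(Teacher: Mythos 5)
Your proof is correct and follows essentially the same route as the paper: the same cycle of implications, with $(3)\Rightarrow(2)$ via Lemma~\ref{basiclabelinglemma} and $(1)\Rightarrow(3)$ via Proposition~\ref{observe1}. The only (minor) difference is in the reachability step of $(1)\Rightarrow(3)$: the paper picks the maximum-label vertex of the connected component containing $v$ and notes it must be a pinnacle, while you walk uphill along strictly increasing labels until you hit a pinnacle; both arguments are valid.
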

    
    \begin{proof}
        To prove the theorem, we  show that (1) $\Rightarrow$ (3) $\Rightarrow$ (2) $\Rightarrow$ (1). 
        
        \medskip

        \noindent (1) $\Rightarrow$ (3): Assume the pinnacle set of $(G, \lambda)$ is $S$ and $|S| = k$.  
        Let $V_S$ contain the vertices of $G$ whose labels are in $S$.  
        From  Proposition~\ref{observe1}, $V_S$ is an independent set of size $k$.   
        Now, consider an arbitrary vertex $v$. If $v \in V_S$, then $v$ has a path of length $0$ to itself.   
        So suppose $v \not \in V_S$. 
        Let $C$ be the connected component of $G$ that contains $v$. 
        Let $z$ be the vertex in $C$ so that $\lambda(z)$ is the largest label in $C$.  
        Hence, $\lambda(z)$ is a pinnacle of $(G, \lambda)$, so $z \in V_S$.  
        Furthermore, there is a path from $z$ to~$v$. 
        \smallskip
        
        \noindent (3) $\Rightarrow$ (2): Let $\lambda$ be a basic labeling of $(G, [n], I)$. 
        The $k$ largest labels in $[n]$ are $n-k+1, n-k+2, \ldots, n$.  By Lemma~\ref{basiclabelinglemma}, the pinnacle set of $(G, \lambda)$ is  $\{n-k+1, n-k+2, \ldots, n\}$. 
        
        \medskip
        
        \noindent (2) $\Rightarrow$ (1): When $\{n-k+1, \ldots, n-1, n\}$ is a pinnacle set of $G$, then $G$ has a pinnacle set of size $k$. 
    \end{proof}

    \begin{corollary}\label{bigthmcor}
        Let $G$ be a connected graph with $n$ vertices and $k \in [n]$.  The following statements are equivalent:
        \smallskip
        
        \noindent (1) $G$ has a pinnacle set of size $k$.
        
        \noindent (2) The subset $M_{n,k} = \{n-k+1, \ldots, n-1, n\}$ is a  pinnacle set of $G$. 
        
        \noindent (3) $G$ has an independent set  of size $k$. 
    \end{corollary}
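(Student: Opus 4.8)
The plan is to derive this corollary directly from Theorem \ref{bigthm1} by observing that connectivity trivializes the reachability hypothesis appearing in part (3) of that theorem. Statements (1) and (2) of the corollary are verbatim copies of statements (1) and (2) of Theorem \ref{bigthm1}, and the extra hypothesis that $G$ is connected only narrows the class of graphs under consideration, so the equivalence (1) $\Leftrightarrow$ (2) is inherited immediately. Hence the only real content is to show that, for connected $G$, part (3) of the corollary ($G$ has an independent set of size $k$) is equivalent to part (3) of Theorem \ref{bigthm1} ($G$ has an independent set $I$ of size $k$ such that every vertex of $G$ is reachable from some vertex of $I$).

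First I would dispatch the easy direction: the theorem's condition obviously implies the corollary's condition, since dropping the reachability requirement only weakens the statement. For the converse, suppose $G$ is connected and $I$ is an independent set of size $k$. Because $k \in [n]$ we have $k \ge 1$, so $I$ is non-empty; fix any $v \in I$. Connectivity means every vertex of $G$ lies in the same component as $v$, hence is reachable from $v$ and therefore from some vertex of $I$. Thus the same $I$ witnesses part (3) of Theorem \ref{bigthm1}.

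Combining the two directions, part (3) of the corollary and part (3) of Theorem \ref{bigthm1} are equivalent whenever $G$ is connected, and the chain of equivalences established in Theorem \ref{bigthm1} then yields (1) $\Leftrightarrow$ (2) $\Leftrightarrow$ (3) for the corollary as well.

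There is essentially no obstacle to overcome; the only point that warrants a moment's care is that the independent set $I$ must be non-empty in order to choose the vertex $v$ used in the reachability argument, and this is guaranteed precisely because $k \ge 1$. (Were $k = 0$ permitted, the empty independent set would fail to reach the vertices of a non-empty connected graph — consistent with the fact that, by Proposition \ref{observe1}, the empty set is never a pinnacle set.)
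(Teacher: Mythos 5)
Your proposal is correct and follows exactly the paper's route: the paper's proof of this corollary is the one-line observation that connectivity makes the reachability condition in part (3) of Theorem~\ref{bigthm1} automatic, which is precisely what you argue (with the additional, harmless care about $I$ being non-empty since $k \ge 1$).
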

    
    \begin{proof}
        When $G$ is connected,  (3) in Theorem~\ref{bigthm1} reduces to just $G$ having an independent set of size $k$.
    \end{proof}

    \begin{corollary}\label{bigthmcor2}
        Let $G$ be a graph with $n$ vertices and  $c$ connected components.  The smallest-sized pinnacle set of $G$ has $c$ elements.
    \end{corollary}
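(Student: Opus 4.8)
The plan is to read off both a lower bound and a matching upper bound on the size of a pinnacle set directly from the characterization in Theorem~\ref{bigthm1}. First I would note that a pinnacle set always exists: any labeling $\lambda$ of $G$ produces a pinnacle set $\Pin(G,\lambda)$, which is non-empty by Proposition~\ref{observe1}. So the quantity ``smallest-sized pinnacle set of $G$'' is well-defined, and it suffices to show (i) no pinnacle set has fewer than $c$ elements, and (ii) there is a pinnacle set with exactly $c$ elements.

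For the lower bound (i), I would invoke the equivalence $(1)\Leftrightarrow(3)$ of Theorem~\ref{bigthm1}: if $G$ has a pinnacle set of size $k$, then $G$ has an independent set $I$ with $|I| = k$ such that every vertex of $G$ is reachable from some vertex of $I$. Since a vertex is reachable only from vertices in its own connected component, the reachability condition forces $I$ to contain at least one vertex from each of the $c$ components of $G$. These contributions are distinct (different components are disjoint), so $k = |I| \ge c$. Hence no pinnacle set has size smaller than $c$.

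For the upper bound (ii), I would exhibit an independent set of size exactly $c$ meeting the hypotheses of Theorem~\ref{bigthm1}: choose one vertex $v_1, \dots, v_c$ from each connected component and set $I = \{v_1, \dots, v_c\}$. Then $I$ is independent, because vertices lying in distinct components are non-adjacent, and every vertex of $G$ lies in some component and is therefore reachable from the chosen $v_j$ in that component. By $(3)\Rightarrow(2)$ of Theorem~\ref{bigthm1}, $M_{n,c} = \{n-c+1, \ldots, n\}$ is a pinnacle set of $G$, so $G$ has a pinnacle set of size $c$. Combining (i) and (ii), the smallest-sized pinnacle set of $G$ has exactly $c$ elements.

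There is no real obstacle here; the only point requiring a moment's care is the translation between the reachability condition of Theorem~\ref{bigthm1}(3) and the statement ``$I$ meets every connected component,'' which is immediate once one observes reachability respects components. The rest is bookkeeping.
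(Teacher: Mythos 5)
Your proposal is correct and takes essentially the same approach as the paper: both establish the upper bound by choosing one vertex per connected component and invoking Theorem~\ref{bigthm1} to get a pinnacle set of size $c$. The only (cosmetic) difference is in the lower bound, which you derive from the reachability clause of Theorem~\ref{bigthm1}(3), whereas the paper argues directly that in any labeling the largest label within each component is a pinnacle; both arguments are valid and rest on the same underlying observation that every component must contribute at least one pinnacle.
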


    \begin{proof}
        Since $G$ has $c$ connected components, $G$ has an independent set of size $c$ that can reach every vertex of $G$, so we can just pick a vertex from each component.  By Theorem~\ref{bigthm1}, $G$ has a pinnacle set of size $c$.  Now, consider any labeling $\lambda$ of $G$.   The highest label in each connected component of $G$ is a pinnacle of $(G, \lambda)$.  Hence, the pinnacle set of $(G, \lambda)$ has size at least $c$. It follows that the size of the smallest pinnacle set of $G$ is $c$. 
    \end{proof}
    
    \subsection{Computational complexity results}\label{subsec:complexity}

    The close connection between independent sets and pinnacle sets suggest that certain problems about pinnacle sets are computationally hard. Consider the following decision problems:

    \bigskip
    \fbox{\begin{minipage}{.8\textwidth}   
    
    \noindent {\bf PinnacleSetSize}
    \smallskip
    
    \noindent {\it Input:} A graph $G$ with $n$ vertices and an integer $k$, where $1 \leq k \leq n$.
    
    \noindent {\it Question:} Does $G$ have a  pinnacle set of size at least $k$?
    \end{minipage}}

    \bigskip
    
    \fbox{\begin{minipage}{.8\textwidth}
    
    \noindent {\bf PinnacleSetExistence}
    \smallskip
    
    \noindent {\it Input:} A graph $G$ with $n$ vertices and a subset $S$ of $[n]$. 
    
    \noindent {\it Question:} Is $S$ a pinnacle set of  $G$?
    \end{minipage}}

    \bigskip

    We establish that both problems are NP-complete.  Before we do so, consider the well-known NP-complete problem (see \cite{NPchapter} for a proof)  {\bf IndependentSet:}
    Given a graph $G$ with $n$ vertices and an integer $k \in [n]$, does $G$ have an independent set of size at least $k$?  We claim that the version below is still NP-complete. 
    
    \begin{proposition}
        {\bf IndependentSet} remains NP-complete even when the input graph $G$ is connected.
    \label{prop:IS}
    \end{proposition}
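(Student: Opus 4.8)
The plan is to establish membership in NP and NP-hardness separately, with the latter handled by a short reduction from the general (possibly disconnected) version of \textbf{IndependentSet}, which is given to be NP-complete. Membership in NP is immediate and is unaffected by the connectivity restriction: an independent set of size at least $k$ serves as a certificate that is checked in polynomial time by verifying its size and that it induces no edge of $G$.

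For NP-hardness I would use the ``cone'' construction. Given an instance $(G,k)$ with $G$ on $n$ vertices and $1 \le k \le n$, form the graph $G'$ on $n+1$ vertices obtained from $G$ by adding one new vertex $v$ adjacent to every vertex of $G$. Then $G'$ is connected, since every vertex is adjacent to $v$, and $G'$ is produced in time polynomial in the size of $G$. The reduction outputs $(G',k)$; note that $1 \le k \le n < n+1 = |V(G')|$, so this is a valid instance of the connected version.

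It then remains to check that $(G,k)$ is a yes-instance of \textbf{IndependentSet} if and only if $(G',k)$ is a yes-instance of the connected version. For the forward direction, any independent set $I$ of $G$ with $|I| \ge k$ remains independent in $G'$, because the only edges added are incident to $v$, and $v \notin I$. For the converse, let $I'$ be an independent set of $G'$ with $|I'| \ge k$. If $v \notin I'$, then $I'$ is an independent set of $G$ of size at least $k$. If $v \in I'$, then since $v$ is adjacent to all other vertices, $I' = \{v\}$, forcing $k = 1$; but $n \ge 1$, so $G$ has an independent set of size $1 \ge k$. In every case $G$ has an independent set of size at least $k$, which completes the reduction.

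I do not anticipate a genuine obstacle; the only step needing a moment's care is the degenerate case $k = 1$, and relatedly the need to verify that the equivalence holds uniformly for every admissible $k$ rather than only for $k \ge 2$. The case analysis above handles this, and one could alternatively sidestep it by noting that for $k = 1$ both problems are trivially yes-instances (as $n \ge 1$), and running the cone reduction only when $k \ge 2$.
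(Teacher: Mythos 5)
Your proof is correct and uses the same construction as the paper: adding a dominating apex vertex to make $G$ connected, then checking the equivalence with a small case analysis around $k=1$. No substantive difference from the paper's argument.
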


    \begin{proof}  
          {\bf IndependentSet} is already known to be in NP so we just have to show that the additional restriction in the proposition does not affect the computational complexity of the problem.
         
          Suppose $(G,k)$ is the input to {\bf IndependentSet}, but $G$ is not connected. 
          Add a new vertex $z$ and let $z$ be adjacent to all the vertices of $G$.  Call the new graph $H$.  Notice that $H$ is now connected.  Furthermore, we argue that $(G,k)$ is a yes-instance of {\bf Independent Set} if and only if $(H,k)$ is also a yes-instance of {\bf Independent Set}. When $k = 1$, the statement is true since both $G$ and $H$ have independent sets of size $1$.  When $k \ge 2$, the statement is also true because (i) every independent set of $G$ is also an independent set of $H$ and (ii) every independent set of $H$ with size at least two will never contain $z$ and is therefore an independent set of $G$.  Since constructing $H$ from $G$ takes linear time, we have shown that {\bf Independent Set} polynomially reduces to the version of the problem when the input graph is connected.
    \end{proof}

    \begin{theorem}\label{thm:complexity}
        {\bf PinnacleSetSize} and {\bf PinnacleSetExistence} are NP-complete problems.
    \end{theorem}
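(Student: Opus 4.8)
The plan is to prove membership in NP for both problems and then establish NP-hardness by reducing from the connected version of \textbf{IndependentSet} established in Proposition~\ref{prop:IS}.

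First, I would argue that both problems lie in NP. For \textbf{PinnacleSetSize}, a certificate is a labeling $\lambda : V(G) \to [n]$; given $\lambda$, one checks in polynomial time which labels are pinnacles (for each vertex, compare its label with its neighbors') and verifies that the resulting pinnacle set has size at least $k$. For \textbf{PinnacleSetExistence}, a certificate is again a labeling $\lambda$, and one checks in polynomial time that $\Pin(G,\lambda)$ equals $S$ exactly. So both problems are in NP.

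Next, for NP-hardness I would reduce from the connected version of \textbf{IndependentSet}. Suppose $(G,k)$ is an instance with $G$ connected on $n$ vertices. By Corollary~\ref{bigthmcor}, a connected graph $G$ has a pinnacle set of size $k$ if and only if $G$ has an independent set of size $k$; moreover, since having an independent set of size $k$ implies having one of every smaller size, "$G$ has a pinnacle set of size at least $k$" is equivalent to "$G$ has an independent set of size at least $k$." Hence $(G,k)$ is a yes-instance of \textbf{IndependentSet} if and only if $(G,k)$ is a yes-instance of \textbf{PinnacleSetSize}; this is an immediate polynomial-time (indeed, identity) reduction, so \textbf{PinnacleSetSize} is NP-hard. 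For \textbf{PinnacleSetExistence}, I would map the instance $(G,k)$ to the instance $(G, M_{n,k})$ where $M_{n,k} = \{n-k+1, \ldots, n\}$. By the equivalence of (2) and (3) in Corollary~\ref{bigthmcor}, $M_{n,k}$ is a pinnacle set of the connected graph $G$ if and only if $G$ has an independent set of size $k$. Since $M_{n,k}$ is computable in polynomial time from $(G,k)$, this is a valid polynomial-time reduction, so \textbf{PinnacleSetExistence} is NP-hard.

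Combining NP membership with NP-hardness gives that both \textbf{PinnacleSetSize} and \textbf{PinnacleSetExistence} are NP-complete. The main subtlety to get right is the "at least $k$" versus "exactly $k$" distinction in \textbf{PinnacleSetSize}: I need the monotonicity observation that an independent set of size $k$ yields independent sets of all sizes up to $k$ (just delete vertices), so that the "at least" phrasing in both decision problems lines up correctly. Everything else follows directly from Corollary~\ref{bigthmcor}, which is the real engine of the argument; the reductions themselves are essentially trivial once that correspondence is in hand.
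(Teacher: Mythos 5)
Your proposal is correct and follows essentially the same route as the paper: NP membership via a polynomial-time checkable certificate, then NP-hardness of both problems by reducing from the connected version of \textbf{IndependentSet} using Corollary~\ref{bigthmcor}, with the identity map for \textbf{PinnacleSetSize} and the map $(G,k)\mapsto (G,M_{n,k})$ for \textbf{PinnacleSetExistence}. The only (immaterial) difference is that for \textbf{PinnacleSetSize} the paper certifies a yes-instance with an independent set reaching all vertices rather than a labeling, and your explicit handling of the ``at least $k$'' versus ``exactly $k$'' monotonicity is if anything slightly more careful than the paper's.
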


    \begin{proof} 
        Let us start with {\bf PinnacleSetSize}. Suppose $(G,k)$ is a yes-instance of the problem.  Then a subset $V'$ of $V(G)$ can serve as a polynomial-size certificate.  Check that $|V'| \ge k$, $V'$ is an independent set of $G$ and every vertex of $G$ is reachable from some vertex of $V'$.  These tasks can all be done in polynomial time.  By Theorem~\ref{bigthm1}, $V'$ will pass all these checks if and only if $G$ has a pinnacle set of size at least $k$. 
        
        Consider the input $(H,k)$ to the NP-complete problem {\bf IndependentSet}. Assume $H$ is connected. By Corollary~\ref{bigthmcor}, $(H,k)$ is a yes-instance of {\bf Independent Set} if and only if $(H,k)$ is also a yes-instance of {\bf PinnacleSetSize}. Thus, we have shown that the version of {\bf IndependentSet} whose input graph is connected polynomially reduces to  {\bf PinnacleSetSize}.  Since the former is NP-complete by Proposition \ref{prop:IS}, it follows that {\bf PinnacleSetSize} is also NP-complete.

        For {\bf PinnacleSetExistence}, a labeling $\lambda$ of $G$ serves as a polynomial-size certificate of a yes-instance $(G,S)$.  Check   that $S$ is the pinnacle set of $(G, \lambda)$.  If so, then $S$ is a pinnacle set of $G$.  Computing the pinnacle set of $(G, \lambda)$ can be done in polynomial time. 
        
        To prove that the problem is NP-complete,         
        we again consider a reduction from {\bf IndependentSet} whose input graph $H$ is connected. We note that $H$ has an independent set of size at least $k$ if and only if it has an independent set of size exactly $k$. By  Corollary~\ref{bigthmcor}, $H$ has an independent set of size  $k$ if and only if $M_{n,k} = \{n-k +1, n- k+2, \ldots, n\}$ is a pinnacle set of $H$.  That is, $(H,k)$ is a yes-instance of {\bf IndependentSet} if and only if $(H, M_{n,k})$ is a yes-instance of 
        {\bf PinnacleSetExistence}.  Consequently, {\bf PinnacleSetIdentity} is NP-complete.
    \end{proof}

\section{Pinnacle sets for certain graph families}\label{sec:graph families}

Although we have just shown that {\bf PinnacleSetExistence} is NP-complete, 
for certain graph families, we can still give a complete characterization of its pinnacle sets. 
We present such results for complete graphs, complete bipartite graphs, cycles, and paths.
Let $\Pin(G)$ contain all pinnacle sets of $G$, and as is customary in combinatorics, let the lower case font denote its size; i.e., $\pinn(G)=|\Pin(G)|$.

\subsection{Complete graphs}
We show that $\{n\}$ is the only pinnacle set of $K_n$, the complete graph on $n$ vertices.

\begin{lemma}\label{lem:kn}
If $n\geq 1$, then $\Pin(K_n)=\{n\}$ and $\pinn(K_n)=1$.
\end{lemma}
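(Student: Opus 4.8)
The plan is to pin down the pinnacle set of $(K_n,\lambda)$ for \emph{every} labeling $\lambda$ and observe it is always the same one-element set. The key structural fact is that in $K_n$ every two distinct vertices are adjacent, so any independent set has at most one vertex. Combining this with Proposition~\ref{observe1} forces every pinnacle set to have size exactly $1$, and then Proposition~\ref{observe1} again identifies that single element as $n$.

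Concretely, first I would invoke Proposition~\ref{observe1}: for any labeling $\lambda$ of $K_n$, if $S$ is the pinnacle set of $(K_n,\lambda)$, then $n\in S$ and the vertex set $V_S$ is independent in $K_n$. Since $K_n$ has no independent set of size $2$, we get $|V_S|\le 1$, hence $|S|\le 1$; together with $n\in S$ this yields $S=\{n\}$. This shows $\{n\}$ is the \emph{only} candidate pinnacle set. Second, I would check that $\{n\}$ is actually attained — for instance by Corollary~\ref{bigthmcor2}, since $K_n$ is connected ($c=1$) its smallest pinnacle set has one element, or simply exhibit the labeling $\lambda$ directly: the vertex with label $n$ is a pinnacle (all its neighbors have smaller labels) while every vertex labeled $j<n$ is adjacent to the vertex labeled $n$ and so is not a pinnacle. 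Hence $\Pin(K_n)=\{n\}$ and $\pinn(K_n)=1$. The case $n=1$ is the degenerate one-vertex graph $K_1$, where the single vertex is vacuously a pinnacle, so $\Pin(K_1)=\{1\}$ as well.

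There is essentially no substantive obstacle here; the lemma is immediate once the independence-set connection of Proposition~\ref{observe1} is in hand. The only point requiring a sentence of care is the base/edge case $n=1$ (degree-$0$ vertex, pinnacle by vacuous satisfaction of the definition, as emphasized in the introduction) and making sure the "attainability" half is not overlooked — it would be a gap to argue only that $\{n\}$ is the unique candidate without noting it is realized.
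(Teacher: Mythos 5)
Your proposal is correct and amounts to the same argument as the paper's: the paper simply notes directly that every vertex is adjacent to the vertex labeled $n$ and hence is not a pinnacle, while you route the same observation through Proposition~\ref{observe1} (independence of $V_S$) plus an attainability check. Both are valid; the extra care you give to attainability and the $n=1$ case is fine but not a genuinely different route.
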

\begin{proof}
For any labeling of $K_n$, each vertex is adjacent to the vertex labeled $n$, and is therefore not a pinnacle. Thus, $\{n\}$ is the only pinnacle set for $K_n$ and $\pinn(K_n)=1$.   \end{proof}

 \subsection{Complete bipartite graphs}
We now characterize all of the pinnacle sets of $K_{m,n}$,  the complete bipartite graph on $m+n$ vertices.
In what follows, whenever $a<b$ are integers we let $[a,b]=\{a,a+1,\ldots,b\}$ and whenever $a=b$ we let $[a,b]=\{a\}$.

\begin{lemma}\label{lem:kmn}
If $m\geq n\geq 1$, then $\Pin(K_{m,n})=\{[k,m+n]: n+1\leq k\leq m+n\}$ and $\pinn(K_{m,n})=m$.
\end{lemma}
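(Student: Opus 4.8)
The plan is to prove the two claims of Lemma~\ref{lem:kmn} separately: first that every set of the form $[k, m+n]$ with $n+1 \le k \le m+n$ is a pinnacle set of $K_{m,n}$, and second that these are the \emph{only} pinnacle sets. Throughout, write the bipartition of $K_{m,n}$ as $A \cup B$ with $|A| = m \ge n = |B|$, and note that every independent set of $K_{m,n}$ is contained in $A$ or in $B$, so the independence number is $m$.

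For the forward direction, fix $k$ with $n+1 \le k \le m+n$ and set $j = m+n-k+1$, so $1 \le j \le m$; I want a pinnacle set of size $j$ equal to $[k, m+n]$. Take any independent set $I \subseteq A$ of size $j$ (possible since $j \le m$). Every vertex of $K_{m,n}$ is reachable from $I$: vertices in $B$ are adjacent to $I$, and the remaining vertices of $A$ are at distance $2$ from $I$ through $B$ (this uses $n \ge 1$, and if $j = m$ there are no remaining $A$-vertices anyway). So $(K_{m,n}, [m+n], I)$ admits a basic labeling, and by Lemma~\ref{basiclabelinglemma} its pinnacle set is the set of the $j$ largest labels in $[m+n]$, namely $\{m+n-j+1, \ldots, m+n\} = [k, m+n]$. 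Hence each such interval is realized.

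For the reverse direction, let $S$ be any pinnacle set of $(K_{m,n}, \lambda)$ and let $V_S$ be the vertices carrying those labels. By Proposition~\ref{observe1}, $m+n \in S$ and $V_S$ is an independent set, so $V_S \subseteq A$ or $V_S \subseteq B$; in either case $|S| = |V_S| \le m$. It remains to show $S$ is an \emph{interval} ending at $m+n$, i.e. $S = [m+n-|S|+1, m+n]$. Suppose not: then some label $\ell \notin S$ satisfies $\ell > s$ for some $s \in S$, so pick the largest label $\ell$ not in $S$ and a label $s \in S$ with $s < \ell$ (such an $s$ exists because $m+n \in S$). Let $v$ be the vertex with $\lambda(v) = \ell$ and let $u$ be the vertex with $\lambda(u) = s$. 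Since $\lambda(u)$ is a pinnacle, $u$'s entire neighborhood gets labels below $s < \ell$, so $v$ is not a neighbor of $u$; as $K_{m,n}$ is complete bipartite this forces $u, v$ to lie in the same part, say both in $A$. Now I argue $v$ must itself be a pinnacle, contradicting $\ell \notin S$: every neighbor of $v$ lies in $B$, and I claim each vertex $w \in B$ has $\lambda(w) < \ell$. Indeed, the vertex $u \in A$ is a pinnacle adjacent to all of $B$, so $\lambda(w) < \lambda(u) = s < \ell$ for every $w \in B$. Hence $\lambda(v)$ beats all its neighbors and $\ell \in S$, a contradiction. This shows $S$ has no "gap" below $m+n$, so $S = [m+n - |S| + 1,\, m+n]$ with $1 \le |S| \le m$, i.e. $S \in \{[k, m+n] : n+1 \le k \le m+n\}$.

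Finally, the count $\pinn(K_{m,n}) = m$ follows immediately: the admissible values of $k$ run over $n+1, n+2, \ldots, m+n$, giving exactly $m$ distinct interval pinnacle sets, and the two directions above show these are precisely the pinnacle sets of $K_{m,n}$.

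I expect the main obstacle to be the reverse direction's "no gap" argument — specifically, getting the case analysis on which part of the bipartition contains $u$ and $v$ right, and making sure that picking the \emph{largest} missing label $\ell$ (rather than an arbitrary one) is what makes the contradiction go through cleanly. One should double-check the degenerate cases $n = 1$ and $|S| = m$ in the forward direction's reachability claim, but those are routine.
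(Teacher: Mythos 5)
Your proof is correct. The reverse direction is essentially the paper's argument: a pinnacle label $s$ forces every vertex in the opposite part to carry a label below $s$, hence every label above $s$ sits in the same part as $s$ and is itself a pinnacle, so $S$ is an interval ending at $m+n$ with $|S|\le m$ by independence. (Your choice of the \emph{largest} missing label is actually unnecessary --- the argument shows directly that $s\in S$ and $\ell>s$ imply $\ell\in S$ --- and your worry about the case split on which part contains $u$ and $v$ is unfounded, since the argument is symmetric in the two parts; the paper instead normalizes by placing the vertex labeled $m+n$ in the larger part and working with $s=\min S$.) The only real difference is in the forward direction: the paper writes down an explicit labeling of $V$ and $W$ and verifies its pinnacle set by hand, whereas you take an independent set $I\subseteq A$ of size $j$, check reachability, and invoke Lemma~\ref{basiclabelinglemma}. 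This produces the same layered labeling ($I$ on top, then $B$, then $A\setminus I$), but routing it through the basic-labeling machinery is cleaner and avoids re-verifying the pinnacle computation; the paper's explicit version has the mild advantage of being self-contained and of making the label ranges $[1,k'-1]$, $[k',n+k'-1]$, $[n+k',n+m]$ visible for later use.
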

\begin{proof} 
Let $V=\{v_1,v_2,\ldots,v_m\}$ and $W=\{w_1,w_2,\ldots,w_n\}$ be the disjoint sets of vertices of $K_{m,n}$. We now label the vertices of $K_{m,n}$ so that $[n+k', n+m]$, where $1\leq k'\leq m$, is the pinnacle set. Let $\lambda(v_j) = j$ for $j = 1$ to $k'-1$ and let $\lambda(v_j) = n+j$ for $j = k'$ to $m$.  Assign the remaining  labels in $[k', n+k'-1]$ to the vertices of $W$.  Let $V' = \{v_1, v_2, \hdots, v_{k'-1}\}$ and $V'' = V-V'$.  Notice that for every $v' \in V', w \in W, v'' \in V''$, $\lambda(v') < \lambda(w) < \lambda(v'')$.  Thus, none of the labels in $[1, k'-1] \cup [k', n+k'-1]$ are pinnacles while {\it every} label in $[n+k', n+m]$ is a pinnacle. Since $1 \leq k' \leq m$, it follows that $[n+1, n+m], [n+2, n+m], \hdots, [n+m, n+m]$ are all pinnacle sets of $K_{m,n}$. 

For the reverse inclusion, suppose $S$ is a pinnacle set of $(K_{m,n},\lambda)$.  Without loss of generality, assume $v \in V$ so that $\lambda(v) = m+n$.  An immediate consequence is that for each $w \in W$, $\lambda(w)$ is not a pinnacle because $v$ and $w$ are adjacent. 
  Let $s$ be the smallest label in $S$ and $\lambda(v') = s$. If $v'\in W$, then $v'$ and $v$ would be adjacent and $\lambda(v)>\lambda(v')$, contradicting that $s$ is a pinnacle.  Hence $v'\in V$.
  It must also be the case that for every $w \in W$, $\lambda(w) < s$. 
  Pick $t \in [s,m+n]$.  
  Since $s \leq t$, the vertex $v''$ that has $\lambda(v'') = t$ is in $V$.  Since $v''$ is adjacent to only the vertices in $W$, $t$ is also a pinnacle.  It follows that {\it every} label in $[s,m+n]$ is a pinnacle; i.e., $S = [s,m+n]$.  Finally, by Corollary~\ref{bigthmcor2}, $|S| \leq m$ because the largest independent set of $K_{m,n}$ is $m$.  Hence, 
 $S\in\{[k,m+n]: n+1\leq k\leq m+n\}$, as desired.

The count for the number of distinct pinnacle sets follows directly from the construction above. 
\end{proof}

In the special case of $m=1$, the complete bipartite graph $K_{1,n-1}$ is a star graph with $n$ vertices. The previous result immediately implies the following. 

\begin{corollary}\label{lem:pinnacles of stars} 
If $n\geq 1$, then $\Pin(K_{1,n-1})=\{[k,n]:2\leq k\leq n\}$ and $\pinn(K_{1,n-1})=n-1$. 
\end{corollary}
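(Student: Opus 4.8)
The plan is to derive Corollary~\ref{lem:pinnacles of stars} directly from Lemma~\ref{lem:kmn} by specializing the parameters. The star graph $K_{1,n-1}$ is the complete bipartite graph $K_{m,n'}$ with parts of sizes $m = n-1$ and $n' = 1$; here I am writing $n'$ for the second parameter in Lemma~\ref{lem:kmn} to avoid clashing with the $n$ of the corollary, which counts the total number of vertices. The only caveat is that Lemma~\ref{lem:kmn} is stated under the hypothesis $m \geq n' \geq 1$, so I should first check that this ordering holds: we need $n - 1 \geq 1$, i.e. $n \geq 2$, and then handle the edge case $n = 1$ separately.

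First I would dispose of the degenerate case $n = 1$: then $K_{1,0} = K_1$ is a single vertex, its only labeling assigns the label $1$, and its unique pinnacle set is $\{1\} = [1,1]$, which matches $\{[k,n] : 2 \leq k \leq n\}$ only if we read that family as the empty-indexed... actually $\{[k,1]: 2\le k\le 1\}$ is empty, so I should state the corollary for $n\ge 2$ or note that $n=1$ is covered by Lemma~\ref{lem:kn}. Cleaner: observe $K_{1,n-1}$ with $n=1$ has no second part at all, and simply invoke Lemma~\ref{lem:kn} with $\Pin(K_1) = \{1\} = \{[k,1] : k = 1\}$, pointing out the indexing convention; but likely the intended reading assumes $n \geq 2$, matching the ``previous result immediately implies'' phrasing.

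Then for $n \geq 2$, I would apply Lemma~\ref{lem:kmn} with $m \leftarrow n-1$ and the lemma's ``$n$'' replaced by $1$. The total number of vertices is $m + n' = (n-1) + 1 = n$, consistent with the corollary's statement. The lemma gives $\Pin(K_{n-1,1}) = \{[k, (n-1)+1] : 1 + 1 \leq k \leq (n-1)+1\} = \{[k,n] : 2 \leq k \leq n\}$, and $\pinn(K_{n-1,1}) = m = n-1$. Since $K_{1,n-1} \cong K_{n-1,1}$ as graphs, and pinnacle sets are a graph-isomorphism invariant (they depend only on the abstract graph, not on a labeling of the parts), this is exactly the claimed characterization.

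There is essentially no obstacle here — the corollary is a pure instantiation — so the only thing requiring care is bookkeeping: making sure the two roles of ``$n$'' are not conflated, verifying the hypothesis $m \geq n'$ reduces to $n \geq 2$, and treating $n = 1$ (and possibly $n = 2$, where the single part $W$ has one vertex and $V$ also has one vertex, so $m = n' = 1$ and the formula still applies) by hand or by citing Lemma~\ref{lem:kn}. I would write the proof in two or three sentences, essentially: ``Apply Lemma~\ref{lem:kmn} with the roles of $m$ and $n$ taken by $n-1$ and $1$; since $K_{1,n-1} = K_{n-1,1}$, the characterization and count follow immediately,'' with a parenthetical remark handling $n = 1$.
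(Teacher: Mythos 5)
Your proposal is correct and matches the paper exactly: the paper offers no written proof, stating only that Lemma~\ref{lem:kmn} ``immediately implies'' the corollary via the specialization $m=n-1$, second part of size $1$. Your extra bookkeeping on the $n=1$ case is sound and in fact flags a small defect in the statement as written, since for $n=1$ the family $\{[k,1]:2\leq k\leq 1\}$ is empty while $\Pin(K_1)=\{\{1\}\}$.
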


\subsection{Cycle graphs}
Let $C_n$ denote the cycle graph on $n$ vertices.

\begin{lemma}
If $S$ is a pinnacle set of $C_n$, then $|S|\leq \lfloor\frac{n}{2}\rfloor$.
\end{lemma}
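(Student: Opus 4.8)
The plan is to reduce this immediately to the independence number of a cycle. By Proposition~\ref{observe1}, if $S$ is the pinnacle set of some labeling $(C_n,\lambda)$, then the vertex set $V_S$ consisting of all vertices whose labels lie in $S$ is an independent set of $C_n$, and clearly $|V_S|=|S|$ since $\lambda$ is a bijection. So it suffices to show that every independent set of $C_n$ has size at most $\lfloor n/2\rfloor$.

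For that last fact I would argue directly. Write the vertices of $C_n$ cyclically as $u_1,u_2,\ldots,u_n$ with $u_i$ adjacent to $u_{i+1}$ (indices mod $n$). If $I$ is an independent set, then for each $i$ the pair $\{u_i,u_{i+1}\}$ contains at most one vertex of $I$. Pair up the vertices as $\{u_1,u_2\},\{u_3,u_4\},\ldots$; when $n$ is even this gives $n/2$ disjoint pairs covering all vertices, so $|I|\le n/2$, and when $n$ is odd we get $(n-1)/2$ disjoint pairs covering $u_1,\ldots,u_{n-1}$ together with the leftover vertex $u_n$, so $|I|\le (n-1)/2 + 1$; but if $|I|$ attained $(n-1)/2+1$ we would need $u_n\in I$ together with one vertex from each pair, and a short check of the two pairs adjacent to $u_n$ forces a contradiction, giving $|I|\le (n-1)/2=\lfloor n/2\rfloor$. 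Combining, $|S|=|V_S|\le\lfloor n/2\rfloor$.

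There is essentially no serious obstacle here; the only point requiring a little care is the odd case of the independence-number bound, where the naive pairing argument gives $\lceil n/2\rceil$ and one must rule out the extremal configuration by examining the vertex left unpaired. An alternative, even shorter route avoids independent sets entirely: going around the cycle, two adjacent vertices cannot both be pinnacles, so in any labeling the number of pinnacles is at most the size of a maximum "no two consecutive" subset of the cyclic sequence of $n$ positions, which is $\lfloor n/2\rfloor$; this is the same counting argument phrased directly in terms of pinnacles. I would present the independent-set version since Proposition~\ref{observe1} is already available and makes the proof a one-line reduction plus a standard lemma.
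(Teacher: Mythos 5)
Your proof is correct and follows essentially the same route as the paper: Proposition~\ref{observe1} converts the pinnacle set into an independent set of the same size, and the bound is then just the independence number $\lfloor n/2\rfloor$ of $C_n$, which the paper cites as known (via Corollary~\ref{bigthmcor}) while you prove it by pairing. One small caveat on that standard fact: in the odd case, ruling out $|I|=(n+1)/2$ requires propagating the forced choices around the whole cycle (if $u_n\in I$ then $u_1\notin I$ forces $u_2\in I$, hence $u_3\notin I$ forces $u_4\in I$, \ldots, hence $u_{n-1}\in I$, contradicting that $u_{n-1}$ is adjacent to $u_n$), not merely inspecting the two pairs adjacent to $u_n$ as you suggest; for $n\ge 7$ those two pairs alone yield no contradiction.
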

\begin{proof}
    By Corollary~\ref{bigthmcor}, the size of a pinnacle set of a graph is the size of an independent set of the graph. 
    The maximum size of an independence set of a cycle on $n$ vertices is exactly $\lfloor\frac{n}{2}\rfloor$.
\end{proof}

The following definition and lemma will play a key role in subsequent arguments.

\begin{definition}
    Let $S=\{s_1,s_2,\ldots,s_k\}\subseteq[n]$.  Define
$\ell(s_i)=|\{x\in [n]\setminus S: x< s_i\}|$, the number of elements in $[n]$ that are less than $s_i$ and are not in $S$. 
\end{definition}

\begin{lemma}\label{lemma:gapping}
Let $S=\{s_1,s_2,\ldots,s_k\}$ with $s_1< s_2<\cdots< s_k$ be a pinnacle set of $(G, \lambda)$.  Let $\lambda(v_j) = s_j$ for $j = 1, \hdots, k$, and let $G_j$ be the subgraph of $G$ induced by the vertices whose labels are less than or equal to $s_j$. If 
some $s_i$ has $\ell(s_i) \leq i$ and $\deg(v_j) \ge 2$ for $j = 1, \hdots, i$, then $G_i$ contains a cycle. 
\end{lemma}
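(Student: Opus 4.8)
The plan is to derive a cycle in $G_i$ from a simple edge count, after first translating the numerical hypothesis $\ell(s_i)\le i$ into a bound on $|V(G_i)|$. First I would observe that $G_i$ has exactly $s_i$ vertices: since $\lambda$ is a bijection onto $[n]$, the vertices with labels $\le s_i$ are precisely the preimages of $1,2,\ldots,s_i$. Of these, exactly $i$ carry pinnacle labels, namely $v_1,\ldots,v_i$ (the elements of $S$ not exceeding $s_i$ are $s_1<\cdots<s_i$), so the remaining $s_i-i$ vertices carry non-pinnacle labels. Since the $i-1$ elements of $S$ strictly below $s_i$ are exactly $s_1,\ldots,s_{i-1}$, we get $\ell(s_i)=(s_i-1)-(i-1)=s_i-i$, and hence the hypothesis $\ell(s_i)\le i$ is equivalent to $s_i\le 2i$.

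Next I would exploit the fact that a pinnacle vertex keeps all of its neighbors inside $G_i$. Indeed, for $j\le i$, every neighbor of $v_j$ in $G$ has label strictly less than $s_j\le s_i$, so it lies in $G_i$; consequently $\deg_{G_i}(v_j)=\deg_G(v_j)\ge 2$ by hypothesis. Moreover, by Proposition~\ref{observe1} the set $A:=\{v_1,\ldots,v_i\}$ is independent in $G$, hence in $G_i$, so every edge of $G_i$ incident to $A$ joins a vertex of $A$ to one of the $s_i-i$ non-pinnacle vertices of $G_i$; denote the set of the latter by $B$.

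Now let $H$ be the spanning subgraph of $G_i$ whose edge set consists of exactly those edges of $G_i$ incident to $A$. Every edge of $H$ has exactly one endpoint in $A$ (none has both, since $A$ is independent), so $H$ is bipartite with part $A$, and $\deg_H(v_j)=\deg_{G_i}(v_j)$ because all neighbors of $v_j$ in $G_i$ lie in $B$ and the corresponding edges are in $H$. Therefore $|V(H)|=s_i$ while $|E(H)|=\sum_{j=1}^{i}\deg_{G_i}(v_j)\ge 2i$. Combining with $s_i\le 2i$ gives $|E(H)|\ge|V(H)|$, so $H$ is not a forest and hence contains a cycle; since $H$ is a subgraph of $G_i$, so does $G_i$.

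I do not expect a serious obstacle here. The two points that need care are the bookkeeping that converts $\ell(s_i)\le i$ into the vertex bound $s_i\le 2i$, and the observation that the degree hypothesis on $v_1,\ldots,v_i$ — a statement about $G$ — transfers verbatim to $G_i$ precisely because a pinnacle vertex cannot have a neighbor with a larger label. The concluding step, that a graph with at least as many edges as vertices contains a cycle, is standard.
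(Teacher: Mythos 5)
Your proof is correct and follows essentially the same route as the paper's: translate $\ell(s_i)\le i$ into $|V(G_i)|=s_i\le 2i$, note that pinnacle vertices keep all their neighbors (and hence their degrees) inside $G_i$, use independence of $\{v_1,\dots,v_i\}$ to count at least $2i$ distinct edges, and conclude with the standard ``edges $\ge$ vertices implies a cycle'' fact. Your auxiliary bipartite subgraph $H$ just makes explicit the edge-counting step that the paper leaves implicit.
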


\begin{proof}
By definition, $G_i$ has $s_i = \ell(s_i) + i$ vertices.  But $\ell(s_i)\leq i$, so $G_i$ has at most $i+i = 2i$ vertices.
Let $V_i= \{v_1, v_2, \hdots, v_i\}$ and $N[V_i]$ be the closed neighborhood of $V_i$. Since $s_1, s_2, \hdots, s_i$ are pinnacles of $(G, \lambda)$, every vertex in $N[V_i]$ is in $G_i$ and, consequently, $\deg_G(v) = \deg_{G_i}(v)$ for each $v \in V_i$.  But $V_i$ remains an independent set of $G_i$ so it follows that $G_i$ has at least $\sum_{j=1}^i \deg(v_j) \ge 2i$ edges.  Since the number of edges of $G_i$ is at least the number of its vertices, $G_i$ has to have a cycle. \end{proof}

In Theorems \ref{thm:Pinnacles of Cn} and \ref{thm:pinnacles of paths}, we fully characterize the pinnacles sets of cycles and paths, respectively. To determine if a set $S=\{s_1,s_2,\ldots,s_k\},$ is a pinnacle set of $G$, where $G$ is $C_n$ or $P_n$, for each $1\leq i\leq k-1$, we consider the number of labels less than $s_i$ that are not in $S$. 
If there exists an index $i\in[k]$ for which there are too few such labels (as described in the theorems), we consider the subgraph induced on $N[V_i]$, the closed neighborhood of the set of vertices whose labels are $s_1,s_2,\ldots,s_i$. 
By giving bounds on the number of vertices and edges of this subgraph, we show by way of contradiction that $G$ contains a proper subgraph that contains a cycle (in Theorem \ref{thm:Pinnacles of Cn}) or that $G$ contains a proper subgraph that is all of $G$  (in Theorem \ref{thm:pinnacles of paths}).  

\begin{theorem}\label{thm:Pinnacles of Cn}
Let $n \geq  3$ and $k$ be positive integers with $ k \leq \lfloor\frac{n}{2}\rfloor$.  Let $S=\{s_1,s_2,\ldots,s_k\}$ be a subset of $[n]$ with $s_1<s_2<\cdots<s_k$. 
Then $S$ is a pinnacle set of $C_n$ if and only if $\ell(s_i)\geq i+1$ for all $1\leq i\leq k-1$ and $s_k = n$.
\end{theorem}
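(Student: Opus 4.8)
The plan is to prove the two implications separately. The forward implication is a quick consequence of the machinery already in place, so I would dispatch it first. By Proposition~\ref{observe1}, any pinnacle set contains $n$, giving $s_k=n$. For the gap inequalities, fix $i\in\{1,\dots,k-1\}$ and suppose toward a contradiction that $\ell(s_i)\le i$. Every vertex of $C_n$ has degree $2$, so Lemma~\ref{lemma:gapping} applies and forces the induced subgraph $G_i$ on the vertices with labels $\le s_i$ to contain a cycle. But $i\le k-1$ means $s_i<s_k=n$, so $G_i$ misses the vertex labeled $n$; being an induced subgraph of $C_n$ on a proper subset of its vertices, $G_i$ is a disjoint union of paths and hence acyclic --- a contradiction. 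Thus $\ell(s_i)\ge i+1$ for all $i\in\{1,\dots,k-1\}$.

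For the reverse implication I would build an explicit labeling of $C_n$. The case $k=1$ is immediate: $S=\{n\}=M_{n,1}$ is a pinnacle set of the connected graph $C_n$ by Corollary~\ref{bigthmcor}. Assume $k\ge 2$ and set $T=[n]\setminus S$. The first step is to reserve ``small'' labels for the valleys between consecutive pinnacles: pick $r_1\in T$ with $r_1<s_1$, and then, for $j=2,\dots,k$ in turn, pick $r_j\in T\setminus\{r_1,\dots,r_{j-1}\}$ with $r_j<s_{j-1}$. This greedy choice succeeds precisely because of the hypothesis: when $r_j$ is to be chosen, all of $r_1,\dots,r_{j-1}$ already lie below $s_{j-1}$, and $|\{t\in T:t<s_{j-1}\}|=\ell(s_{j-1})\ge (j-1)+1=j$, so an unused label below $s_{j-1}$ is still available. (This is the only place the inequalities $\ell(s_i)\ge i+1$ are used, and all of them get consumed.) Let $W=T\setminus\{r_2,\dots,r_k\}$, so $r_1\in W$, $|W|=n-2k+1\ge 1$ (using $k\le\lfloor\frac{n}{2}\rfloor$), and write $W=\{w_1<w_2<\cdots<w_q\}$ with $q=n-2k+1$; note $w_1\le r_1<s_1$.

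Now label $C_n$ by reading the cyclic word
\[
s_1,\ r_2,\ s_2,\ r_3,\ s_3,\ \ldots,\ r_k,\ s_k,\ w_q,\ w_{q-1},\ \ldots,\ w_1
\]
around the cycle (so $w_1$ becomes adjacent to $s_1$). This uses $k+(k-1)+q=n$ vertices and all $n$ labels. I would then verify that the pinnacle set is exactly $S$. Each $s_j$ has two smaller neighbors: $s_1$ is flanked by $r_2<s_1$ and $w_1<s_1$; for $2\le j\le k-1$ the neighbors of $s_j$ are $r_j<s_{j-1}$ and $r_{j+1}<s_j$; and $s_k=n$ is flanked by $r_k$ and $w_q$, both less than $n$. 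Conversely, each label outside $S$ has a strictly larger neighbor: $r_j$ (for $2\le j\le k$) lies between $s_{j-1}$ and $s_j$, both larger; and, setting $w_{q+1}:=s_k$, each $w_i$ has the larger neighbor $w_{i+1}$. Hence $\Pin(C_n,\lambda)=S$, completing the reverse direction.

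The forward direction should be routine once Lemma~\ref{lemma:gapping} is available. The crux is the construction: the delicate point is to place the $n-k$ non-pinnacle labels so that (i) the ``budget'' of labels below each $s_i$ is spent exactly as the hypothesis allows --- which is what the greedy selection of $r_1,\dots,r_k$ accomplishes --- and (ii) every non-pinnacle vertex ends up with a strictly larger neighbor, which is why the leftover labels $w_1<\cdots<w_q$ are funneled, in increasing order toward the vertex labeled $n$, into the single valley bordering that vertex. Arranging both requirements simultaneously --- in particular, guaranteeing that the valley adjacent to $s_1$ begins with a label below $s_1$ --- is the one spot where the extra ``$+1$'' in $\ell(s_i)\ge i+1$ is genuinely needed (it is what lets us set aside $r_1$ before the rest of the $r_j$'s are chosen).
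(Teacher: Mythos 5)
Your proposal is correct and follows essentially the same route as the paper: necessity via Proposition~\ref{observe1} and Lemma~\ref{lemma:gapping} (a proper induced subgraph of $C_n$ is acyclic), and sufficiency via an explicit alternating pinnacle/valley labeling whose reserved small labels consume exactly the budget $\ell(s_i)\ge i+1$, with the leftover labels placed in increasing order toward the vertex labeled $n$. The only differences are cosmetic (where the increasing run is anchored and how the reserved labels are indexed), and your verification of the greedy step and of the non-pinnacle labels is sound.
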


\begin{proof}

($\Rightarrow$) Suppose $S$ is a pinnacle set of $(C_n, \lambda)$.  Let $\lambda(v_j) = s_j$ for $j = 1, \hdots, k$.  Assume that for some $1 \leq i \leq k-1$, $s_i$ has $\ell(s_i)\leq i$, and let $G_i$ be the subgraph of $C_n$ induced by the vertices whose labels are less than or equal to $s_i$. Since every vertex in $C_n$ has degree $2$,
by Lemma ~\ref{lemma:gapping} $G_i$ has a cycle.  But $i < k$ so $G_i$ is a proper subgraph of $C_n$ and cannot contain a cycle.  Hence, it must be the case that for $1 \leq i \leq k-1$, $\ell(s_i)\geq i+1$.  Finally, by Proposition \ref{observe1}, $n \in S$ and has to be the largest pinnacle so $s_k = n$.

($\Leftarrow$) Suppose that $S$ is a pinnacle set of $C_n$ and satisfies $\ell(s_i)\geq i+1$ for all $1\leq i\leq k-1$ and $s_k = n$. 
We now construct $\lambda$, a graph labeling of $C_n$, with pinnacle set $S$.
To do this,  denote the vertices of $C_n$ by $1$ through $n$ (clockwise) so $\lambda(i)$ is the label of vertex $i$.
Let $\lambda(n)=s_k=n$. Then label the even numbered vertices $\lambda(2j)=s_j$, for $1\leq j\leq k-1$.
Since $\ell(s_1)\geq 2$, there exist $x_0,x_1<s_1$ with which we can label $\lambda(1)=x_0$ and $\lambda(3)=x_1$. 
Since $\ell(s_i)\geq i+1$, for all $2\leq i\leq k-1$, we know that at every step we have a new small number $x_i<s_i$ with which we can label $\lambda(2i+1)=x_i$. Note that 
$\lambda$ assigns a pinnacle at the even-numbered vertices $2$ through $2(k-1)$ (and at vertex $n$), and assigns values smaller than those neighboring pinnacles at the odd-numbered vertices $1$ through $2k-1$. 
To finish the labeling, it suffices to use the remaining labels in $[n]\setminus(S\cup \{x_0,x_1,\ldots,x_{k-1}\})$ on the vertices numbered $2k$ to $n-1$ placing those labels in increasing order.
This labeling of $C_n$ has pinnacle set $S$, as desired. 
For an illustration of this labeling, see Figure~\ref{fig:labelingCn}.
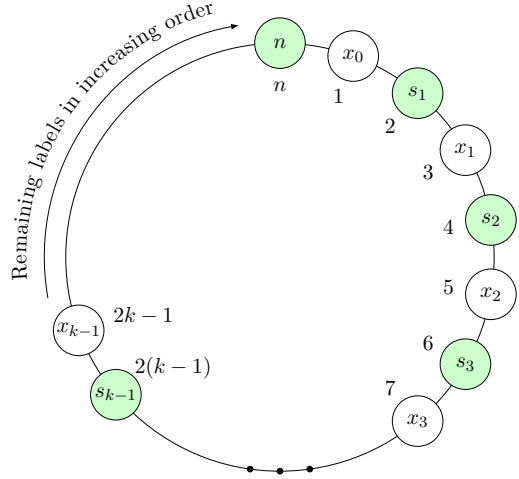
\begin{figure}
    \centering
 \resizebox{3in}{!}{
\begin{tikzpicture}[roundnode/.style={circle, draw=black, inner sep=0pt,  minimum size=.9cm}]
    \draw[line width=0.1mm] circle(1.5in);
    \node[roundnode, fill=green!20] at (90:1.5in) {\(n\)};
    \node at (90:1.2in) {\(n\)};

    \node[roundnode,fill=white] at (70:1.5in) {\(x_0\)};
    \node at (70:1.2in) {\(1\)};
    \node at (70:1.65in) {};

    \node[roundnode, fill=green!20] at (50:1.5in) {\(s_1\)};
    \node at (50:1.2in) {\(2\)};
    \node at (50:1.65in) {};

    \node[roundnode, fill=white] at (30:1.5in) {\(x_1\)};
    \node at (30:1.2in) {\(3\)};

    \node[roundnode, fill=green!20] at (10:1.5in) {\(s_2\)};
    \node at (10:1.2in) {\(4\)};

    \node[roundnode, fill=white] at (-10:1.5in) {\(x_2\)};
    \node at (-10:1.2in) {\(5\)};

    \node[roundnode, fill=green!20] at (-30:1.5in) {\(s_3\)};
    \node at (-30:1.2in) {\(6\)};

    \node[roundnode, fill=white] at (-50:1.5in) {\(x_3\)};
    \node at (-50:1.2in) {\(7\)};

    \node[roundnode, fill=black, minimum size=1mm] at (-82:1.5in) {};
    \node[roundnode, fill=black, minimum size=1mm] at (-90:1.5in) {};
    \node[roundnode, fill=black, minimum size=1mm] at (-98:1.5in) {};

    \node[roundnode, fill=green!20] at (-140:1.5in) {\(s_{k-1}\)};
    \node at (-140:1.2in) {\phantom{AAA}\(2(k-1)\)};

    \node[roundnode, fill=white] at (-160:1.5in) {\(x_{k-1}\)};
    \node at (-160:1.2in) {\phantom{AAA}\(2k-1\)};
    
    \node at (-160:1.2in) {};
    \node at (-160:1.65in) {\phantom{A}};

    \node(AS)at(-170:1.65in) {};
    \node(AT)at(-260:1.65in) {};
    \draw[-latex] (AS) arc[
        start angle=-170,
        end angle=-260,
        x radius=1.65in,
        y radius=1.65in
    ];
    \draw[color=white, rotate=-35, postaction={decorate, decoration={text along path, raise=4pt, text align={align=center}, text={Remaining labels in increasing order}, reverse path}}] (0,0) circle (1.72in);
\end{tikzpicture}
}
    \caption{The construction used in Theorem~\ref{thm:Pinnacles of Cn}.}
    \label{fig:labelingCn}
\end{figure}
\end{proof}

\begin{corollary}\label{cor:gaps for cycle}
Let $n \geq  3$ and $k$ be positive integers with $ k \leq \lfloor\frac{n}{2}\rfloor$.  Let $S=\{s_1,s_2,\ldots,s_k\}$ be a subset of $[n]$ with $s_1<s_2<\cdots<s_k$. Then $S$ is a pinnacle set of $C_n$ if and only $s_i\geq 2i+1$ for all $1\leq i\leq k-1$ and $s_k = n$.   
\end{corollary}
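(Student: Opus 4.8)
The plan is to obtain this statement as an immediate reformulation of Theorem~\ref{thm:Pinnacles of Cn}. That theorem already characterizes the pinnacle sets of $C_n$ by the conditions $\ell(s_i)\geq i+1$ for all $1\leq i\leq k-1$ together with $s_k=n$. The condition $s_k=n$ is literally the same in the corollary, so all that remains is to show that $\ell(s_i)\geq i+1$ is equivalent to $s_i\geq 2i+1$ for each relevant $i$.

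The key step is an exact formula for $\ell(s_i)$. Fix $i$ with $1\leq i\leq k-1$. There are exactly $s_i-1$ integers in $[n]$ strictly less than $s_i$. Among these, the ones that lie in $S$ are precisely $s_1,s_2,\ldots,s_{i-1}$, since $S=\{s_1,\ldots,s_k\}$ with $s_1<s_2<\cdots<s_k$; in particular there are exactly $i-1$ of them. Hence
\[
\ell(s_i)=(s_i-1)-(i-1)=s_i-i.
\]
Substituting this into the criterion of Theorem~\ref{thm:Pinnacles of Cn}, the inequality $\ell(s_i)\geq i+1$ becomes $s_i-i\geq i+1$, i.e.\ $s_i\geq 2i+1$. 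Since this equivalence holds for every $i$ in the range $1\leq i\leq k-1$, the two sets of hypotheses (together with $s_k=n$) are equivalent, and the corollary follows.

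I do not expect a genuine obstacle here: the proof is a single substitution once the identity $\ell(s_i)=s_i-i$ is recorded. The only point needing (minimal) care is the bookkeeping of which elements of $S$ fall below $s_i$, and that is immediate from the assumed increasing ordering of the $s_j$.
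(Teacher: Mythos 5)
Your proposal is correct and follows essentially the same route as the paper: both derive the identity $\ell(s_i)=s_i-i$ and substitute it into the criterion of Theorem~\ref{thm:Pinnacles of Cn}. Your bookkeeping (counting the $i-1$ elements of $S$ below $s_i$ among the $s_i-1$ smaller integers) is in fact stated more cleanly than in the paper, and you correctly cite the cycle theorem where the paper's proof mistakenly refers to the path version.
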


\begin{proof}
In the set $\{1,2,\ldots,s_i\}$, 
exactly $i$ of these labels are pinnacles and $s_i-1$ are not pinnacles. 
So $\ell(s_i)=s_i-i$. 
It follows from Theorem~\ref{thm:pinnacles of paths} that $S$ is a pinnacle set of $C_n$ if and only if $s_i-i\geq i+1$ for all $1\leq i\leq k-1$ and $s_k = n$. 
\end{proof}
In Table \ref{tab:num_pin_sets_cycle}
we give counts for the number of $k$-size pinnacle sets of the cycle on $n$ vertices. This array is known as the Catalan triangle and agrees with the OEIS entry \cite[\href{https://oeis.org/A008315}{A008315}]{OEIS}. We prove this is the case in Corollary \ref{cor:count pinnacles of cycle graph}
by establishing that the number of size $k$ pinnacles of the cycle graph on $n$ vertices is given by $\binom{n-2}{k-1}-\binom{n-2}{k-2}$.
Furthermore, in Corollary \ref{couting all pinnacle sets of Cn} we also show that the total number of distinct pinnacle sets of $C_n$ is given by $\binom{n-2}{\lfloor\frac n2\lfloor -1}$.

\begin{table}[ht]
    \centering
    \begin{tabular}{|c||c|c|c|c|c|}\hline
        \backslashbox{$n$}{$k$} & 1 & 2 & 3 & 4 & 5 \\ \hline\hline
        3  & 1 & 0 & 0 & 0 & 0 \\\hline
        4  & 1 & 1 & 0 & 0 & 0 \\\hline
        5  & 1 & 2 & 0 & 0 & 0 \\\hline
        6  & 1 & 3 & 2 & 0 & 0 \\\hline
        7  & 1 & 4 & 5 & 0 & 0 \\\hline
        8  & 1 & 5 & 9 & 5 & 0 \\\hline
        9  & 1 & 6 & 14 & 14 & 0 \\\hline
        10  & 1 & 7 & 20 & 28 & 14 \\\hline
        11  & 1 & 8 & 27 & 48 & 42 \\\hline
    \end{tabular}
    \caption{Number of $k$-size pinnacle sets of \(C_n\).}
    \label{tab:num_pin_sets_cycle}
\end{table}

\begin{remark}
    In Table \ref{tab:num_pin_sets_path} we 
    give the data for the number of $k$-size pinnacle sets for the path graph on $n$ vertices, which differs from Table \ref{tab:num_pin_sets_cycle} (for the number of $k$-size pinnacle sets of the cycle graph) by decrementing the parameter $n$ by one. 
\end{remark}

\subsection{Path graphs}
Let $P_n$ denote the path graph on $n$ vertices.

\begin{lemma}
If $S$ is a pinnacle set of $P_n$, then $|S|\leq \lceil\frac{n}{2}\rceil$.
\end{lemma}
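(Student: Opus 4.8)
The plan is to deduce this immediately from the characterization of pinnacle-set sizes for connected graphs. Since $P_n$ is connected, Corollary~\ref{bigthmcor} applies: $P_n$ has a pinnacle set of size $k$ if and only if $P_n$ has an independent set of size $k$. In particular, if $S$ is a pinnacle set of $P_n$, then $P_n$ has an independent set of size $|S|$, so $|S|$ is at most the independence number $\alpha(P_n)$.

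It then remains to recall the standard fact that $\alpha(P_n) = \lceil n/2 \rceil$. One direction is a construction: labeling the vertices of $P_n$ as $1, 2, \ldots, n$ along the path, the set of odd-indexed vertices $\{1, 3, 5, \ldots\}$ is independent and has size $\lceil n/2 \rceil$. The other direction is the upper bound: any independent set can contain at most one vertex from each of the $\lceil n/2 \rceil$ consecutive pairs $\{1,2\}, \{3,4\}, \ldots$, so it has size at most $\lceil n/2 \rceil$. Combining these gives $|S| \le \alpha(P_n) = \lceil n/2 \rceil$, as desired.

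There is essentially no obstacle here — the content was already packaged into Corollary~\ref{bigthmcor}, and the rest is the elementary computation of the independence number of a path. (An alternative self-contained route mirroring the cycle case would invoke Lemma~\ref{lemma:gapping}: if $|S| > \lceil n/2 \rceil$ one could derive that a proper induced subgraph of $P_n$ has at least as many edges as vertices, hence contains a cycle, contradicting that $P_n$ is a forest; but this is strictly more work than the one-line argument via Corollary~\ref{bigthmcor}, so I would not pursue it.)
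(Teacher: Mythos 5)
Your proposal is correct and follows exactly the paper's argument: the paper also derives the bound by invoking Corollary~\ref{bigthmcor} and the fact that $\alpha(P_n)=\lceil n/2\rceil$. You simply spell out the elementary computation of the independence number of a path, which the paper states without proof.
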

\begin{proof}
    By Corollary~\ref{bigthmcor}, the size of a pinnacle set is the size of an independence set of the graph. 
    The maximal size of an independence set of a path on $n$ vertices is exactly $\lceil\frac{n}{2}\rceil$.
\end{proof}

\begin{theorem}\label{thm:pinnacles of paths}
Let $n \ge 2$ and $k$ be positive integers with $k \leq \lceil\frac{n}{2}\rceil$.  Let $S=\{s_1,s_2,\ldots,s_k\}$ with $s_1<s_2<\cdots<s_k$.
Then $S$ is a pinnacle set of $P_n$ if and only if $\ell(s_i)\geq i$ for all $1\leq i\leq k-1$ and $s_k = n$.
\end{theorem}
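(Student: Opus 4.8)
The plan is to prove both implications, following the template of Theorem~\ref{thm:Pinnacles of Cn} but carefully accounting for the two degree-$1$ vertices of $P_n$; this is precisely why the threshold drops from $\ell(s_i)\ge i+1$ (cycles) to $\ell(s_i)\ge i$ (paths). Throughout I write $\lambda(v_j)=s_j$ and denote the vertices of $P_n$ by $v_1,\dots,v_n$ in path order.

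\emph{Forward direction.} Suppose $S$ is the pinnacle set of $(P_n,\lambda)$. That $s_k=n$ is immediate from Proposition~\ref{observe1}. For the rest, suppose toward a contradiction that $\ell(s_i)\le i-1$ for some $i\le k-1$, and let $G_i$ be the subgraph of $P_n$ induced by the vertices with label at most $s_i$; it has $s_i=\ell(s_i)+i\le 2i-1$ vertices. The set $V_i=\{v_1,\dots,v_i\}$ is independent by Proposition~\ref{observe1}, and since $s_1,\dots,s_i$ are pinnacles, every neighbor of a vertex of $V_i$ has label smaller than $s_i$; hence $N[V_i]\subseteq V(G_i)$ and $\deg_{G_i}(v)=\deg_{P_n}(v)$ for each $v\in V_i$. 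Lemma~\ref{lemma:gapping} does not apply verbatim because some $v_j$ may be an endpoint of $P_n$, so I would redo its count: as $V_i$ is independent, the number of edges of $G_i$ meeting $V_i$ equals $\sum_{v\in V_i}\deg_{P_n}(v)=2i-e$, where $e\in\{0,1,2\}$ counts the endpoints of $P_n$ lying in $V_i$; thus $|E(G_i)|\ge 2i-2$. But $G_i$, being an induced subgraph of a path, is a forest on at most $2i-1$ vertices, so $|E(G_i)|\le 2i-2$, and equality throughout forces $e=2$, $|V(G_i)|=2i-1$, and $G_i$ connected. A connected induced subgraph of $P_n$ is a subpath, and one containing both endpoints of $P_n$ must equal $P_n$ --- contradicting $|V(G_i)|=2i-1<n$, where $2i-1<n$ follows from $i\le k-1\le\lceil n/2\rceil-1$.

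\emph{Reverse direction.} Assume $\ell(s_i)\ge i$ for all $i<k$ and $s_k=n$ (so $2k-1\le n$); I would construct a labeling $\lambda$ of $P_n$ with pinnacle set $S$. Put the pinnacles on the independent set $\{v_1,v_3,\dots,v_{2k-1}\}$ by $\lambda(v_{2j-1})=s_j$. Using that $\ell(s_j)\ge j$ and that the sets $\{x\in[n]\setminus S:x<s_j\}$ are nested, greedily choose distinct $y_1,\dots,y_{k-1}\in[n]\setminus S$ with $y_j<s_j$ and set $\lambda(v_{2j})=y_j$; then each $s_j$ with $j<k$ is a pinnacle (its path-neighbors carry values $<s_j$), $s_k=n$ is a pinnacle trivially, and each $v_{2j}$ is not a pinnacle. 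Finally, place the remaining $n-2k+1$ labels on $v_{2k},v_{2k+1},\dots,v_n$ \emph{in decreasing order}; then $v_{2k}$ is not a pinnacle since its neighbor $v_{2k-1}$ carries $n$, and each of $v_{2k+1},\dots,v_n$ is not a pinnacle since its left neighbor carries a strictly larger label. Hence $\Pin(P_n,\lambda)=S$.

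The routine parts are the two edge/degree counts and the per-vertex pinnacle checks. The step that needs genuine care --- and the one departing from the cycle argument --- is the treatment of the two endpoints of $P_n$: in the forward direction, an endpoint contributes only $1$ to the degree sum, so the chain of inequalities must be squeezed tightly enough to still force $G_i=P_n$, which is exactly what lets the weaker hypothesis $\ell(s_i)\ge i$ suffice; in the reverse direction, the leftover labels must be arranged in decreasing (not increasing) order precisely so that the terminal vertex $v_n$ does not accidentally become a pinnacle.
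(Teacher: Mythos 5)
Your proof is correct and follows essentially the same route as the paper: the forward direction redoes the edge/vertex count of Lemma~\ref{lemma:gapping} to account for the degree-$1$ endpoints and squeezes $G_i$ into being all of $P_n$, exactly as in the paper's argument. The reverse direction is the mirror image of the paper's construction (the paper anchors $n$ at $v_n$ and fills the tail in increasing order, while you anchor $n$ at $v_{2k-1}$ and fill in decreasing order), but this is an inessential variation and your verification is sound.
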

\begin{proof}
($\Rightarrow$)
Let $S$ be the pinnacle set of $(P_n, \lambda)$ and let $\lambda(v_j) = s_j$ for $j = 1, \hdots, k$. Assume there exists an $1\leq i\leq k-1$ for which $\ell(s_i)\leq i-1$. Let $G_i$ be the subgraph of $C_n$ induced by the vertices whose labels are less than or equal to $s_i$. Following the proof of Lemma \ref{lemma:gapping}, we note that $G_i$ has $s_i = i + \ell(s_i)$ vertices, and since $\ell(s_i)\leq i-1,$ $G_i$ has at most $i + i-1 = 2i-1$ vertices.  Furthermore, since $V_i= \{v_1, v_2, \hdots, v_i\}$ is an independent set, the number of edges of $G_i$ is at least $\sum_{j=1}^i \deg(v_j)$.  Now $V_i$ has at most two vertices with degree $1$ and the rest have degree $2$.  Thus, $\sum_{j=1}^i \deg(v_j) \ge 2(i-2) + 2 = 2i-2$. i.e., $G_i$ is a graph with at most $2i-1$ vertices and at least $2i-2$ edges.  

Recall that $G_i$ is subgraph of $P_n$ and has to be acyclic. For $G_i$ to have at most $2i-1$ vertices and at least $2i-2$ edges and be acyclic,  it has to have {\it exactly} $2i-1$ vertices and {\it exactly} $2i-2$ edges and it is connected. Having fewer than $2i-1$ vertices or having more than $2i-2$ edges or not being connected will immediately imply that $G_i$ contains a cycle, a contradiction. Thus, $G_i$ is a path that has the two endpoints of $P_n$ since two of its vertices have degree $1$.  That is,  $G_i$ itself is $P_n$. But $i < k$ so $G_i$ is a proper subgraph of $P_i.$ 
  Consequently, $\ell(s_i)\geq i$ for all $1\leq i\leq k-1$.  That $s_k = n$ follows directly from Proposition \ref{observe1}.

($\Leftarrow$) This proof is similar to that of Theorem~\ref{thm:Pinnacles of Cn}. We aim to construct a labeling $\lambda$ of $P_n$ with $S$ as pinnacle set. 
We begin by denoting the vertices of $P_n$ as $1$ through $n$ (left to right) so  $\lambda(i)$ is the label of vertex $i$.
Let $\lambda(n)=s_k=n$. 
Then label the odd-numbered vertices $\lambda(2j-1)=s_j$, for $1\leq j\leq k-1$. Since $\ell(s_1)\geq 1$, there exist $x_1<s_1$ with which we can label $\lambda(2)=x_1$. 
Since $\ell(s_i)\geq i$, for all $2\leq i\leq k-1$, we know that at every step we have a new small number $x_i<s_i$ with which we can label $\lambda(2i)=x_i$. Note that 
$\lambda$ places a pinnacle at the odd-numbered vertices $1$ through $2k-3$ (and at vertex $n$), and places values smaller than those neighboring pinnacles at the even-numbered vertices $1$ through $2(k-1)$. 
To finish the labeling, it suffices to use the remaining labels in $[n]\setminus(S\cup \{x_1,x_2,\ldots,x_{k-1}\})$ on the vertices numbered $2k-1$ to $n-1$ placing those labels in increasing order.
This labeling of $P_n$ has pinnacle set $S$, as desired. 
For an illustration of this labeling, see Figure~\ref{fig:labelingPn}.
\begin{figure}
    \centering
 \resizebox{5.5in}{!}{
\begin{tikzpicture}[roundnode/.style={circle, draw=black, inner sep=0pt,  minimum size=.9cm}]   
    \node[roundnode,fill=green!20](A1)at(0,0){$s_1$};
    \node(T1)at(0,-1){1};

    \node[roundnode](A2)at(2,0){$x_1$};
    \node(T2)at(2,-1){2};

    \node[roundnode,fill=green!20](A3)at(4,0){$s_2$};
    \node(T3)at(4,-1){3};

    \node[roundnode](A4)at(6,0){$x_2$};
    \node(T4)at(6,-1){4};
    
    \node(dots1) at(7,0){$\cdots$};
    \node[roundnode,fill=green!20](A5)at(8,0){$s_{k-1}$};
    \node(T5)at(8,-1){\(2k-3\)};
    
    \node(dots2) at(9,0){$\cdots$};
    
    \node[roundnode](A6)at(10,0){\(x_{k-1}\)};
    \node(T6)at(10,-1){\(2k-1\)};

    \node(BS)at(10.5,0.5){};
    \node(BT)at(16.5,0.5){};
    \draw [decoration={brace}, decorate] (BS) -- node [pos=0.5,style={above=3pt}] {\small{Remaining labels in increasing order}} (BT);

\node(dots3)at (13.5,0){$\cdots$};
    
    \node[roundnode,fill=green!20](A7)at(17,0){\(n\)};
    \node(T7)at(17,-1){\(n\)};
    
    \draw(A1)--(A2)--(A3)--(A4)--(dots1)--(A5)--(dots2)--(A6)--(dots3)--(A7);
\end{tikzpicture}
}

    \caption{The construction used in Theorem~\ref{thm:pinnacles of paths}.}
    \label{fig:labelingPn}
\end{figure}
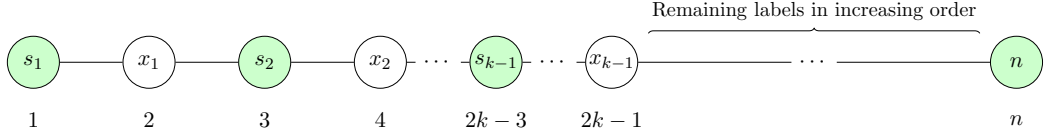
\end{proof}

\begin{corollary}
Let $n\geq 2$ and $k$ be positive integers with $k \leq \lceil\frac{n}{2}\rceil$. Let $S=\{s_1,s_2,\ldots,s_k\}$  be a subset of  $[n]$ with $s_1<s_2<\cdots<s_k$. Then $S$ is a pinnacle set of $P_n$ if and only $s_i\geq 2i$ for all $1\leq i\leq k-1$ and $s_k = n$.
\label{cor:gaps for paths}
\end{corollary}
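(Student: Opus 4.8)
The plan is to deduce Corollary~\ref{cor:gaps for paths} directly from Theorem~\ref{thm:pinnacles of paths}, exactly as Corollary~\ref{cor:gaps for cycle} was deduced from Theorem~\ref{thm:Pinnacles of Cn}. The only work is to re-express the hypothesis ``$\ell(s_i) \geq i$ for all $1 \leq i \leq k-1$'' in terms of the values $s_i$ themselves rather than in terms of the quantity $\ell(s_i)$.

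First I would compute $\ell(s_i)$ explicitly. By definition, $\ell(s_i) = |\{x \in [n] \setminus S : x < s_i\}|$. Among the integers $\{1, 2, \ldots, s_i\}$, the elements of $S$ that are $\leq s_i$ are exactly $s_1, s_2, \ldots, s_i$ (using $s_1 < s_2 < \cdots < s_k$), so there are $i$ of them; one of these, $s_i$ itself, is not strictly less than $s_i$. Hence the number of integers strictly less than $s_i$ that lie in $S$ is $i - 1$, and the number of integers strictly less than $s_i$ total is $s_i - 1$. Subtracting gives $\ell(s_i) = (s_i - 1) - (i - 1) = s_i - i$.

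Next I would substitute this into Theorem~\ref{thm:pinnacles of paths}: the condition $\ell(s_i) \geq i$ becomes $s_i - i \geq i$, i.e., $s_i \geq 2i$, for all $1 \leq i \leq k-1$; and the condition $s_k = n$ is unchanged. Therefore $S$ is a pinnacle set of $P_n$ if and only if $s_i \geq 2i$ for all $1 \leq i \leq k-1$ and $s_k = n$, which is the claim.

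There is essentially no obstacle here — the corollary is a cosmetic restatement of the theorem. The only point requiring the slightest care is the off-by-one in counting: making sure we count integers \emph{strictly} less than $s_i$ (so that $s_i$ is excluded from the count of $S$-elements below it, giving $i-1$ rather than $i$), which is why the formula is $\ell(s_i) = s_i - i$ and the resulting bound is $s_i \geq 2i$ rather than $s_i \geq 2i+1$ as in the cycle case. (The discrepancy with Corollary~\ref{cor:gaps for cycle} is consistent with the remark following Table~\ref{tab:num_pin_sets_cycle} that the path data is the cycle data with $n$ decremented by one.)
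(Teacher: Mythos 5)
Your proposal is correct and follows the same route as the paper: compute $\ell(s_i)=s_i-i$ and substitute into Theorem~\ref{thm:pinnacles of paths}. Your counting of $\ell(s_i)$ is in fact slightly more carefully worded than the paper's version (which miscounts the non-pinnacle labels as ``$s_i-1$'' where it means $s_i-i$), but the argument is identical in substance.
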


\begin{proof}
Similar to the proof given in Corollary \ref{cor:gaps for cycle}, the set $\{1,2, \ldots,s_i\}$
contains exactly $i$ pinnacles and $s_i-1$ labels that are not pinnacles. So $\ell(s_i)=s_i-i$, and it follows from Theorem \ref{thm:pinnacles of paths} that $S$ is a pinnacle set of $P_n$ if and only if $s_i-i\geq i$ for all $1\leq i\leq k-1$ and $s_k = n$. 
\end{proof}

\begin{remark}
 As noted in the introduction, our work generalizes the notion of the pinnacle set of a permutation $\pi = \pi_1 \pi_2 \cdots \pi_n$ of $[n]$ in \cite{davis2018pinnacle}.  If we think of $\pi$ as a labeling of $P_n$, then our definition of pinnacle sets allows for the leaves to be pinnacles, while their definition does not.
 Consequently, in their work, the pinnacle set of $\pi$ need not contain $n$; it can even be an empty set. 
 In our work, however, the pinnacle set of {\it every} labeling of $P_n$ contains $n$. 

Davis et al.~\cite{davis2018pinnacle} characterized the non-empty subsets of positive integers that are pinnacle sets of permutations. We rephrase their statement below to match our discussion.

 \begin{theorem}[\cite{davis2018pinnacle}, Theorem 1.10 and Proposition 2.3]
 Let $S = \{s_1, s_2, \hdots, s_k\}$ be a non-empty set of positive integers with $s_1 < s_2 < \cdots < s_k$ and $s_k = n$.  Then $S$ is a pinnacle set of  a permutation of  $[n]$ if and only if both these statements are true:

    (i) $\{s_1, s_2, \hdots, s_{k-1} \}$ is also a pinnacle set of  
    a permutation of $[s_{k-1}]$  and
    
    (ii) $k < n/2$. 
 
\end{theorem}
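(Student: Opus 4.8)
The plan is to replace the recursive criterion in the statement by the following ``closed-form'' one and prove that instead: a non-empty $S=\{s_1<s_2<\cdots<s_k\}$ with $s_k=n$ is the pinnacle set of some permutation of $[n]$ if and only if $s_i\ge 2i+1$ for every $i\in[k]$. This is equivalent to the stated theorem: condition (ii) is precisely $s_k=n\ge 2k+1$, and, applying the closed-form criterion to the set $\{s_1,\ldots,s_{k-1}\}\subseteq[s_{k-1}]$, condition (i) is precisely the assertion that $s_i\ge 2i+1$ for all $i\le k-1$; so (i) and (ii) together say exactly that $s_i\ge 2i+1$ for all $i\in[k]$, and conversely. (When $k=1$ the set $\{s_1,\ldots,s_{k-1}\}$ is empty and only (ii) has content.) Thus no induction is needed once the closed-form criterion is established, and throughout I use Davis et al.'s convention that $\pi_1$ and $\pi_n$ are never pinnacles.

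For necessity, suppose $\pi$ is a permutation of $[n]$ whose pinnacle set is $S$, fix $i\in[k]$, and form the sequence $\sigma$ obtained from $\pi$ by deleting every entry larger than $s_i$ while keeping the others in their original order; since the surviving values are exactly $\{1,\ldots,s_i\}$, $\sigma$ is a permutation of $[s_i]$. The key point is that each of $s_1,\ldots,s_i$ is a pinnacle of $\pi$, so in $\pi$ its two neighbors are strictly smaller than it and hence smaller than $s_i$, so they are not deleted; therefore $s_1,\ldots,s_i$ remain interior entries of $\sigma$ that strictly dominate both of their (unchanged) neighbors, and they are pairwise non-adjacent in $\sigma$ because each of them keeps an undeleted, strictly smaller neighbor on each side separating it from the next one. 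Thus $\sigma$, a sequence of length $s_i$, carries at least $i$ pairwise non-adjacent pinnacles in interior positions; but a length-$m$ sequence has at most $\lceil(m-2)/2\rceil$ such positions, so $i\le\lceil(s_i-2)/2\rceil$, which rearranges to $s_i\ge 2i+1$ regardless of the parity of $s_i$.

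For sufficiency, assume $s_i\ge 2i+1$ for all $i$ and build $\pi$ explicitly. Let $v_0<v_1<\cdots<v_k$ be the $k+1$ smallest elements of $[n]\setminus S$ (there are $n-k\ge k+1$ of them), and let $r_1<r_2<\cdots$ list the remaining elements of $[n]\setminus S$; set
\[
\pi \;=\; v_0\,s_1\,v_1\,s_2\,v_2\,\cdots\,s_k\,v_k\,r_1\,r_2\,\cdots .
\]
Because $s_j\ge 2j+1$, there are at least $j+1$ integers smaller than $s_j$ and not in $S$, so $v_j<s_j$ and a fortiori $v_{j-1}<s_j$; hence each $s_j$, sitting at position $2j\le 2k\le n-1$, is an interior entry strictly larger than both neighbors $v_{j-1},v_j$ and is therefore a pinnacle. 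Every remaining entry fails to be a pinnacle: $v_0$ and the last entry are endpoints; each $v_j$ with $1\le j\le k-1$ is a valley between the larger $s_j$ and $s_{j+1}$; $v_k$ lies between the larger $s_k$ and $r_1$ (or is an endpoint); and $r_1,r_2,\ldots$ form an increasing run preceded by the smaller $v_k$. Hence the pinnacle set of $\pi$ is exactly $S$, completing the closed-form criterion and therefore the theorem.

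The only genuinely delicate step is the choice of the valleys $v_j$ in the construction: the naive choice $v_j=j+1$ can collide with the small elements of $S$ (note that $s_1$ may be as small as $3$), and the whole purpose of the hypothesis $s_j\ge 2j+1$ is to certify that the $(j{+}1)$-st smallest integer not in $S$ still lies below $s_j$. The two parity computations in the necessity bound and the endpoint/valley verification in the construction are routine bookkeeping, and the deletion-to-$[s_i]$ trick is what keeps the necessity argument short.
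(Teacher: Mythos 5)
Your proof is correct. Note, however, that the paper does not actually prove this statement: it is quoted from Davis, Nelson, Petersen and Tenner \cite{davis2018pinnacle} (their Theorem 1.10 and Proposition 2.3), and the surrounding remark only observes, without proof, that the recursion unrolls to the closed-form condition $s_i > 2i$ for all $i$ --- exactly the criterion $s_i \ge 2i+1$ that you establish directly. So you have supplied a self-contained argument where the paper offers a citation; your reduction of the recursive statement to the closed form, including the degenerate $k=1$ case, is handled correctly. Your two directions line up closely with the paper's treatment of the analogous result for $P_n$ under its own (more permissive) definition: your sufficiency construction --- valleys $v_0,\dots,v_k$ interleaved with the $s_j$'s followed by an increasing tail, with $s_j\ge 2j+1$ guaranteeing enough small non-pinnacle values --- is the same device as the construction in the reverse direction of Theorem~\ref{thm:pinnacles of paths} (Figure~\ref{fig:labelingPn}), except that you must pad position $1$ with the extra valley $v_0$ because under Davis et al.'s convention $\pi_1$ cannot be a pinnacle (which is also why your bound is $2i+1$ rather than the $2i$ of Corollary~\ref{cor:gaps for paths}). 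Your necessity argument --- restricting to the subsequence of values at most $s_i$ and bounding the number of pairwise non-adjacent interior peaks of a length-$m$ sequence by $\lceil(m-2)/2\rceil$ --- replaces the vertex/edge count on the induced subgraph $G_i$ (Lemma~\ref{lemma:gapping} and the forward direction of Theorem~\ref{thm:pinnacles of paths}) with a purely one-dimensional position count; both are counting arguments of comparable length, and yours is arguably the more transparent one in the permutation setting.
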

 
 Note that Davis et al.'s characterization is recursive.  To prove that $S$ is a pinnacle set of a permutation of
  $[n]$, one has to verify that $k < s_k/2$ (remember that $s_k = n$), $k-1 < s_{k-1}/2$, $\ldots$, $1 < s_1/2$ and that $\{s_1\}$ is a pinnacle set of a permutation of 
  $[s_1]$, which is always true when $1 < s_1/2$.  Now, the $k$ inequalities can be summarized as $s_i > 2i$ for $i = 1, 2, \ldots, k$, 
 which bears a strong resemblance to the inequalities in Corollary \ref{cor:gaps for paths}.  Thus, Davis et al.'s characterization of the pinnacle sets of a permutation of
 $[n]$ is very similar but not the same as our characterization of the pinnacle sets of $P_n$ when $n \ge 2$.    
\end{remark}

We conclude this section with Table~\ref{tab:num_pin_sets_path}
where we give counts for the number of $k$-size pinnacle sets of the cycle on $n$ vertices. This array is known as the Catalan triangle and agrees with the OEIS entry \cite[\href{https://oeis.org/A008315}{A008315}]{OEIS}.
We prove this is the case in Corollary \ref{cor:count pinnacles of path graph} by establishing that the number of size $k$ pinnacles of the path graph on $n$ vertices is given by $\binom{n-1}{k-1}-\binom{n-1}{k-2}$. 
Furthermore, in Corollary \ref{couting all pinnacle sets of Pn} we also show that the total number of distinct pinnacle sets of $P_n$ is given by $\binom{n-1}{\lceil\frac n2\rceil -1}$.

\begin{table}[ht]
    \centering
    \begin{tabular}{|c||c|c|c|c|c|}\hline
        \backslashbox{$n$}{$k$} & 1 & 2 & 3 & 4 & 5 \\ \hline\hline
        2  & 1 & 0 & 0 & 0 & 0 \\\hline
        3  & 1 & 1 & 0 & 0 & 0 \\\hline
        4  & 1 & 2 & 0 & 0 & 0 \\\hline
        5  & 1 & 3 & 2 & 0 & 0 \\\hline
        6  & 1 & 4 & 5 & 0 & 0 \\\hline
        7  & 1 & 5 & 9 & 5 & 0 \\\hline
        8  & 1 & 6 & 14 & 14 & 0 \\\hline
        9  & 1 & 7 & 20 & 28 & 14 \\\hline
        10  & 1 & 8 & 27 & 48 & 42 \\\hline
    \end{tabular}
    \caption{Number of distinct \(k\)-size pinnacle set for $P_n$.}
    \label{tab:num_pin_sets_path}
\end{table}

\section{New pinnacle sets from old ones}\label{sec:new pinnacle sets from old ones}

We now present two methods for creating new pinnacle sets from old ones. The first one simply swaps the labels of two vertices while the second one relabels two or more vertices. 

\subsection{Switching labels}

Suppose that in the labeled graph $(G,\lambda)$, $\lambda(w) = p$ is a pinnacle while $\lambda(x) = p'$ is not a pinnacle and $p < p'$.  
This means that all the neighbors of $w$ have labels smaller than $p$ while $x$ has a neighbor whose label is larger than $p'$.  
Swap the labels of $w$ and $x$ and keep all the other vertices' labels the same.  
Call the new labeling $\lambda'$.  
Then,
$\lambda'(w) = p'$ is a pinnacle of $(G, \lambda')$ while $\lambda'(x) = p$ is not.  
It is also possible that the status of the other labels changed; i.e., they were not pinnacles in $(G, \lambda)$ but became pinnacles in $(G, \lambda')$ or vice versa. 
In the next lemma, we show that when $p' = p+1$, such status changes do not occur. For example, if $\{3, 6, 8, 10\}$ is a pinnacle set of $G$, then so are $\{4,6, 8, 10\}, \{3, 7, 8, 10\}$ and $\{3, 6, 9, 10\}$.

\begin{lemma}
\label{nextpinnacleset}
    Let $G$ be a graph  and $P= \{p_1, p_2, \ldots, p_k\}$ a pinnacle set of $G$ with $p_1 < p_2 < \cdots < p_k$.  Suppose that $p_{i+1} - p_{i} > 1$ for some  $i$, $1 \leq i < k$.  Then $(P \setminus \{p_i\}) \cup \{p_i + 1 \}$ is also a pinnacle set of $G$.  
\end{lemma}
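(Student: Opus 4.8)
The plan is to produce the new pinnacle set by a single label swap and then verify, through a short case analysis, that no other label changes status. Start from a labeling $\lambda$ of $G$ with $\Pin(G,\lambda)=P$, and set $q=p_i+1$. The hypothesis $p_{i+1}-p_i>1$ forces $p_i<q<p_{i+1}$, so $q\notin P$. Let $w$ be the vertex with $\lambda(w)=p_i$ (a pinnacle) and $x$ the vertex with $\lambda(x)=q$ (not a pinnacle). The first thing I would record is that $w$ and $x$ are non-adjacent: since $p_i$ is a pinnacle, every neighbor of $w$ has label $<p_i<q$, so $x$ is not a neighbor of $w$. Now define $\lambda'$ by swapping the labels of $w$ and $x$ and leaving all other vertices untouched; the goal is to show $\Pin(G,\lambda')=(P\setminus\{p_i\})\cup\{q\}$.

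Next I would handle $w$ and $x$ themselves. For $w$: its neighbor set is unchanged and every neighbor still carries a label $<p_i<q=\lambda'(w)$, so $\lambda'(w)=q$ is a pinnacle of $(G,\lambda')$. For $x$: because $q$ was not a pinnacle of $(G,\lambda)$, $x$ has a neighbor $y$ with $\lambda(y)>q$; such a $y$ is not $w$ (as $\lambda(w)=p_i<q$), so $y$ keeps its label and $\lambda'(x)=p_i<q<\lambda'(y)$, showing $\lambda'(x)$ is not a pinnacle.

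The core of the argument is that every other vertex $u\notin\{w,x\}$ retains its pinnacle status. Since $\lambda'(u)=\lambda(u)$ and only the labels at $w$ and $x$ moved, I would split on whether $u$ is adjacent to neither, both, $w$ only, or $x$ only. The ``neither'' case is immediate, and in the ``both'' case the multiset of $u$'s neighbor-labels is still $\{p_i,p_i+1\}$ together with the unchanged rest, so nothing changes. If $u$ is adjacent to $w$ only, a neighbor-label rose from $p_i$ to $q$: if $\lambda(u)$ was a pinnacle then $\lambda(u)>p_i$ and $\lambda(u)\neq q$ force $\lambda(u)>q$, so it stays a pinnacle; if $\lambda(u)$ was not a pinnacle, the larger neighbor that witnesses this is still a larger neighbor. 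The one delicate case is $u$ adjacent to $x$ only, where a neighbor-label drops from $q$ to $p_i$ and a non-pinnacle could a priori become a pinnacle. This is exactly where $q=p_i+1$ is used: if $x$ were the unique larger-labeled neighbor of $u$, then $\lambda(u)<q=p_i+1$, hence $\lambda(u)\le p_i$, and since $\lambda(u)\neq p_i$ we get $\lambda(u)<p_i=\lambda'(x)$, so $x$ still witnesses that $\lambda'(u)$ is not a pinnacle; a pinnacle of $(G,\lambda)$ here clearly remains one.

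Combining the three parts yields $\Pin(G,\lambda')=(P\setminus\{p_i\})\cup\{q\}$, which is the claim. The main obstacle is the single case where $u$ is adjacent to $x$ but not $w$; handling it is precisely what makes the ``$+1$'' hypothesis essential, since a larger jump would break the inequality $\lambda(u)\le p_i$ and could genuinely create a new pinnacle.
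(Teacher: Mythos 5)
Your proposal is correct and follows essentially the same route as the paper's proof: swap the labels $p_i$ and $p_i+1$, observe that the two vertices involved are non-adjacent, and check by cases that no other vertex changes pinnacle status, with the $+1$ hypothesis used exactly where you identify it (a vertex whose only larger-labeled neighbor was the one labeled $p_i+1$). No gaps; the only difference is cosmetic organization of the case analysis.
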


\begin{proof} 
 Let $\lambda$ be a labeling of $G$ so that $P$ is the pinnacle set of $(G, \lambda)$.   Assume $\lambda(w) = p_i$ and $\lambda(x) = p_{i} + 1$. Define $\lambda'$ as the labeling that swaps the labels of $w$ and $x$  and keeps the labels of all other vertices fixed.  That is, $\lambda'(w) = p_i +1$, $\lambda'(x) = p_i$ and $\lambda'(v) = \lambda(v)$ for $v \neq w, x$.  
 Let $P'$ denote the pinnacle set of $(G,  \lambda')$.  
 We will now argue that $P' = (P \setminus \{p_i\}) \cup \{p_i + 1 \}$.

Notice that for any vertex $v$ that is not $w$ nor $x$ nor a neighbor of the two vertices,  the status of $\lambda(v) = \lambda'(v)$ in $(G, \lambda)$ and $(G,  \lambda')$ are the same.  That is,  $\lambda(v) \in P$ if and only if $\lambda'(v) \in P'$ because the labels of $v$ and that of its neighbors are the same in both $\lambda$ and $\lambda'$.  So let us consider $w$ and its neighbors first and then $x$ and its neighbors next.

Since $\lambda(w) = p_i \in P$ but $\lambda(x) = p_i+1 \not \in P$,  it must be the case that  $w$ and $x$ are not adjacent. It also means that all the neighbors of $w$ have $\lambda$-labels smaller than $p_i$ and are not in $P$.  These neighbors' labels did not change in $\lambda'$ but $\lambda'(w) > \lambda(w)$.  Thus, $\lambda'(w) = p_i +1 \in P'$ and the neighbors' $\lambda'$-labels are again not in $P'$.

On the other hand,  $\lambda(x) = p_i + 1 \not \in P$ because some neighbor of $x$ has a label larger than $p_{i}+ 1$. This neighbor's label did not change in $\lambda'$ but $\lambda'(x) < \lambda(x)$ so $\lambda'(x) = p_i  \not \in P'$.  
Next consider some neighbor $y$ of $x$ such that $\lambda(y) \in P$.   Then $\lambda(y)$ is larger than $p_i + 1$ while all of its neighbors have labels that are at most $p_i$.  But $\lambda'(y)  = \lambda(y)$ and all of its neighbors' $\lambda$-labels did not change in $\lambda'$ except for $x$ where $\lambda'(x) < \lambda(x)$.  (Note that $w$ is not a neighbor of $y$ since $\lambda(w) \in P$.) Thus, $\lambda'(y) \in P'$ too.

 Finally, suppose some neighbor $z$ of $x$ has $\lambda(z) \not \in P$,  then there are two possibilities:   (i) $\lambda(z) < \lambda(x) $ or (ii) $\lambda(z) >  \lambda(x)$ and $z$ has another neighbor whose label is larger than $\lambda(z)$.   For case (i), $\lambda(z) < p_i $  because $x$ and $w$ are not adjacent.  Thus, $\lambda(z) = \lambda'(z) < \lambda'(x)$ so $\lambda'(z) \not \in P'$.  For case (ii),   since $\lambda(x) = p_i + 1$, the $\lambda$-labels of $z$ and this neighbor are both larger than $p_i + 1$.  Both their labels did not change in $\lambda'$ so $\lambda'(z) \not \in P'$. 

We have shown that the only change from $P$ to $P'$ is that $p_i \not \in P'$ but $p_{i} +1 \in P'$.  Thus, $P'  = (P \setminus \{p_i\}) \cup \{p_i + 1 \}$. 
\end{proof}

Lemma~\ref{nextpinnacleset} leads us to an interesting fact about pinnacle sets. 

\begin{theorem}  
\label{bigthm2}
Let $G$ be a graph with $n$ vertices and $P = \{p_1, p_2, \ldots, p_k\} \subseteq [n]$ a pinnacle set of $G$ with $p_1 < p_2 < \cdots < p_k$.   Let $Q = \{q_1, q_2, \ldots, q_k \}$ be another subset of $[n]$ with $q_1 < q_2 < \cdots < q_k$.  When $p_i \leq q_i$ for all $1\leq i \leq k$, then $Q$ is a pinnacle set of $G$ as well.
\end{theorem}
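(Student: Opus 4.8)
The plan is to travel from $P$ to $Q$ by a finite sequence of elementary moves, each one an application of Lemma~\ref{nextpinnacleset} (incrementing a single element of the current pinnacle set by $1$). Introduce the nonnegative integer potential $\Phi(P)=\sum_{j=1}^{k}(q_j-p_j)$. I will show that whenever $P\neq Q$ there is an index $i$ with $1\le i<k$ at which Lemma~\ref{nextpinnacleset} applies, that the resulting pinnacle set $P'=(P\setminus\{p_i\})\cup\{p_i+1\}$ still satisfies the componentwise domination by $Q$, and that $\Phi(P')=\Phi(P)-1$. Since $\Phi$ is a nonnegative integer that strictly decreases at each move, after $\Phi(P)$ moves the process reaches a pinnacle set equal to $Q$, proving the theorem. (When $k=1$, or when $\Phi(P)=0$, there is nothing to do since then $P=Q$.)

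First I would pin down the top coordinates: by Proposition~\ref{observe1} the largest element of any pinnacle set is $n$, so $p_k=n$, and since $Q\subseteq[n]$ with $p_k\le q_k$ we get $q_k=n=p_k$. Hence if $P\neq Q$ and $i$ is the \emph{largest} index with $p_i\neq q_i$, then $i<k$, $p_i<q_i$, and $p_j=q_j$ for all $j>i$. Now comes the one step that actually requires an observation: checking the gap hypothesis $p_{i+1}-p_i>1$. Since $i<k$ we have $p_{i+1}=q_{i+1}$, and $q_{i+1}>q_i\ge p_i+1$ forces $q_{i+1}\ge p_i+2$, so $p_{i+1}-p_i=q_{i+1}-p_i\ge 2$. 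Thus Lemma~\ref{nextpinnacleset} applies at index $i$ and $P'=(P\setminus\{p_i\})\cup\{p_i+1\}$ is a pinnacle set of $G$; it is a genuine $k$-element increasing set because $p_{i-1}<p_i<p_i+1<p_{i+1}$, so its sorted list is $p_1<\cdots<p_{i-1}<p_i+1<p_{i+1}<\cdots<p_k$.

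Finally I would check that $P'$ is still dominated by $Q$ and has smaller potential: for $j\neq i$ we have $p'_j=p_j\le q_j$, while $p_i<q_i$ gives $p'_i=p_i+1\le q_i$; and $\Phi(P')=\Phi(P)-1$ since only the $i$-th term changed, decreasing by $1$. Then the theorem follows by induction on $\Phi(P)$: apply the above to pass from $P$ to $P'$, and invoke the inductive hypothesis on the pinnacle set $P'$, which is dominated by $Q$ with $\Phi(P')<\Phi(P)$. I do not expect a real obstacle in this argument; the only point that needs care is the gap verification in the second paragraph — that is precisely where the hypothesis $p_j\le q_j$ for all $j$, combined with $p_k=q_k$, is used, and it is exactly what guarantees Lemma~\ref{nextpinnacleset} is applicable at the top differing coordinate.
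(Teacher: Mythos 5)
Your proposal is correct and follows essentially the same route as the paper: both start from the largest index where $P$ and $Q$ differ, verify the gap hypothesis of Lemma~\ref{nextpinnacleset} via $p_{i+1}=q_{i+1}>q_i>p_i$, and apply that lemma repeatedly until $Q$ is reached. The only difference is cosmetic — you formalize the paper's ``apply iteratively / repeat for the remaining indices'' step as an induction on the potential $\sum_j(q_j-p_j)$, which is a clean way to make the termination argument explicit.
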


\begin{proof}
Let $j$ be the largest index such that $p_j < q_j$.  It has to be the case that $1 \leq j < k$ because $p_k = q _k = n$ by Proposition~\ref{observe1}.
Thus,  $P = \{p_1, p_2, \ldots, p_j, q_{j+1}, q_{j+2}, \ldots, q_k \}$.   
Now $p_j < q_j < q_{j+1}$ so $q_{j+1} - p_j > 1$.  According to Lemma~\ref{nextpinnacleset}, it follows that 
$$ \{p_1, p_2, \ldots, p_{j-1},  p_j + 1, q_{j+1}, q_{j+2}, \ldots, q_k \}$$
is a pinnacle set of $G$. Applying this idea iteratively implies that the sets  
$$ \{p_1, p_2, \ldots, p_{j-1}, p_j + 2, q_{j+1}, q_{j+2}, \ldots, q_k \},$$
$$ \{p_1, p_2, \ldots,  p_{j-1}, p_j + 3, q_{j+1}, q_{j+2}, \ldots, q_k \},$$
$$ \vdots $$ 
$$ \{p_1, p_2, \ldots, p_{j-1}, q_j, q_{j+1}, q_{j+2}, \ldots, q_k \},$$
 are all pinnacle sets of $G$.  Repeating the procedure for the remaining indices, it follows that $Q$ is a pinnacle set of $G$. \end{proof}

\begin{figure}[ht!]
\begin{tabular}{cc}
        \centering
        \begin{subfigure}{.45\textwidth}
  \centering
  \begin{tikzpicture}[roundnode/.style={circle, draw=black,  minimum size=8mm}]
         
\node[]at(0,-2.5){};
\node[roundnode,fill=green!20](A1)at(0,2){7};
\node[roundnode,fill=white!20](A2)at(1.9,0.62){5};
\node[roundnode,fill=white!20](A3)at(1.17,-1.61){3};
\node[roundnode,fill=green!20](A4)at(-1.17,-1.61){10};
\node[roundnode,fill=white!20](A5)at(-1.9,0.61){6};
\node[roundnode,fill=white!20](B1)at(0,1){1};
\node[roundnode,fill=white!20](B2)at(0.95,0.31){8};
\node[roundnode,fill=green!20](B3)at(0.58,-0.8){4};
\node[roundnode,fill=white!20](B4)at(-0.58,-0.8){9};
\node[roundnode,fill=white!20](B5)at(-0.95,0.31){2};
\draw(A1)--(A2)--(A3)--(A4)--(A5)--(A1);
\draw(B1)--(B3)--(B5)--(B2)--(B4)--(B1);
\draw(A1)--(B1);
\draw(A2)--(B2);
\draw(A3)--(B3);
\draw(A4)--(B4);
\draw(A5)--(B5);
\end{tikzpicture}
  \caption{Pinnacle set $\{4,7,10\}$}
  \label{fig:sub1}
\end{subfigure}
         &
         \begin{subfigure}{.45\textwidth}
  \centering
  \begin{tikzpicture}[roundnode/.style={circle, draw=black,  minimum size=8mm}]
\node[roundnode,fill=green!20](A1)at(0,2){8};
\node[roundnode,fill=white!20](A2)at(1.9,0.62){5};
\node[roundnode,fill=white!20](A3)at(1.17,-1.61){3};
\node[roundnode,fill=green!20](A4)at(-1.17,-1.61){10};
\node[roundnode,fill=white!20](A5)at(-1.9,0.61){6};
\node[roundnode,fill=white!20](B1)at(0,1){1};
\node[roundnode,fill=white!20](B2)at(0.95,0.31){7};
\node[roundnode,fill=green!20](B3)at(0.58,-0.8){4};
\node[roundnode,fill=white!20](B4)at(-0.58,-0.8){9};
\node[roundnode,fill=white!20](B5)at(-0.95,0.31){2};\draw(A1)--(A2)--(A3)--(A4)--(A5)--(A1);
\draw(B1)--(B3)--(B5)--(B2)--(B4)--(B1);
\draw(A1)--(B1);
\draw(A2)--(B2);
\draw(A3)--(B3);
\draw(A4)--(B4);
\draw(A5)--(B5);
\end{tikzpicture}
  \caption{Pinnacle set $\{4,8,10\}$}
  \label{fig:sub2}
\end{subfigure}\\

   \begin{subfigure}{.45\textwidth}
  \centering
  \begin{tikzpicture}[roundnode/.style={circle, draw=black,  minimum size=8mm}]
\node[roundnode,fill=green!20](A1)at(0,2){9};
\node[roundnode,fill=white!20](A2)at(1.9,0.62){5};
\node[roundnode,fill=white!20](A3)at(1.17,-1.61){3};
\node[roundnode,fill=green!20](A4)at(-1.17,-1.61){10};
\node[roundnode,fill=white!20](A5)at(-1.9,0.61){6};
\node[roundnode,fill=white!20](B1)at(0,1){1};
\node[roundnode,fill=white!20](B2)at(0.95,0.31){7};
\node[roundnode,fill=green!20](B3)at(0.58,-0.8){4};
\node[roundnode,fill=white!20](B4)at(-0.58,-0.8){8};
\node[roundnode,fill=white!20](B5)at(-0.95,0.31){2};
\draw(A1)--(A2)--(A3)--(A4)--(A5)--(A1);
\draw(B1)--(B3)--(B5)--(B2)--(B4)--(B1);
\draw(A1)--(B1);
\draw(A2)--(B2);
\draw(A3)--(B3);
\draw(A4)--(B4);
\draw(A5)--(B5);
\end{tikzpicture}
  \caption{Pinnacle set $\{4,9,10\}$}
  \label{fig:sub3}
\end{subfigure}

         &
         
            \begin{subfigure}{.45\textwidth}
  \centering
  \begin{tikzpicture}[roundnode/.style={circle, draw=black,  minimum size=8mm}]
\node[roundnode,fill=green!20](A1)at(0,2){9};
\node[roundnode,fill=white!20](A2)at(1.9,0.62){4};
\node[roundnode,fill=white!20](A3)at(1.17,-1.61){3};
\node[roundnode,fill=green!20](A4)at(-1.17,-1.61){10};
\node[roundnode,fill=white!20](A5)at(-1.9,0.61){6};
\node[roundnode,fill=white!20](B1)at(0,1){1};
\node[roundnode,fill=white!20](B2)at(0.95,0.31){7};
\node[roundnode,fill=green!20](B3)at(0.58,-0.8){5};
\node[roundnode,fill=white!20](B4)at(-0.58,-0.8){8};
\node[roundnode,fill=white!20](B5)at(-0.95,0.31){2};
\draw(A1)--(A2)--(A3)--(A4)--(A5)--(A1);
\draw(B1)--(B3)--(B5)--(B2)--(B4)--(B1);
\draw(A1)--(B1);
\draw(A2)--(B2);
\draw(A3)--(B3);
\draw(A4)--(B4);
\draw(A5)--(B5);
\end{tikzpicture}
  \caption{Pinnacle set $\{5,9,10\}$}
  \label{fig:sub4}
\end{subfigure}
         \\
            \begin{subfigure}{.45\textwidth}
  \centering
  \begin{tikzpicture}[roundnode/.style={circle, draw=black,  minimum size=8mm}]
\node[roundnode,fill=green!20](A1)at(0,2){9};
\node[roundnode,fill=white!20](A2)at(1.9,0.62){4};
\node[roundnode,fill=white!20](A3)at(1.17,-1.61){3};
\node[roundnode,fill=green!20](A4)at(-1.17,-1.61){10};
\node[roundnode,fill=white!20](A5)at(-1.9,0.61){5};
\node[roundnode,fill=white!20](B1)at(0,1){1};
\node[roundnode,fill=white!20](B2)at(0.95,0.31){7};
\node[roundnode,fill=green!20](B3)at(0.58,-0.8){6};
\node[roundnode,fill=white!20](B4)at(-0.58,-0.8){8};
\node[roundnode,fill=white!20](B5)at(-0.95,0.31){2};
\draw(A1)--(A2)--(A3)--(A4)--(A5)--(A1);
\draw(B1)--(B3)--(B5)--(B2)--(B4)--(B1);
\draw(A1)--(B1);
\draw(A2)--(B2);
\draw(A3)--(B3);
\draw(A4)--(B4);
\draw(A5)--(B5);
\end{tikzpicture}
  \caption{Pinnacle set $\{6,9,10\}$}
  \label{fig:sub5}
\end{subfigure}
         &
         
             \begin{subfigure}{.45\textwidth}
  \centering
  \begin{tikzpicture}[roundnode/.style={circle, draw=black,  minimum size=8mm}]
\node[roundnode,fill=green!20](A1)at(0,2){9};
\node[roundnode,fill=white!20](A2)at(1.9,0.62){4};
\node[roundnode,fill=white!20](A3)at(1.17,-1.61){3};
\node[roundnode,fill=green!20](A4)at(-1.17,-1.61){10};
\node[roundnode,fill=white!20](A5)at(-1.9,0.61){5};
\node[roundnode,fill=white!20](B1)at(0,1){1};
\node[roundnode,fill=white!20](B2)at(0.95,0.31){6};
\node[roundnode,fill=green!20](B3)at(0.58,-0.8){7};
\node[roundnode,fill=white!20](B4)at(-0.58,-0.8){8};
\node[roundnode,fill=white!20](B5)at(-0.95,0.31){2};
\draw(A1)--(A2)--(A3)--(A4)--(A5)--(A1);
\draw(B1)--(B3)--(B5)--(B2)--(B4)--(B1);
\draw(A1)--(B1);
\draw(A2)--(B2);
\draw(A3)--(B3);
\draw(A4)--(B4);
\draw(A5)--(B5);
\end{tikzpicture}
  \caption{Pinnacle set $\{7,9,10\}$}
  \label{fig:sub6}
\end{subfigure}  
         \\
\end{tabular}
         \bigskip
         \caption{The figures above show how we transform a labeling of the Petersen graph that has $\{4, 7, 10\}$ as a pinnacle set to another labeling whose pinnacle set is $\{7, 9, 10\}$ according to the proof of Theorem~\ref{bigthm2}. } 
        \label{fig:petersengraphexample}
    \end{figure}
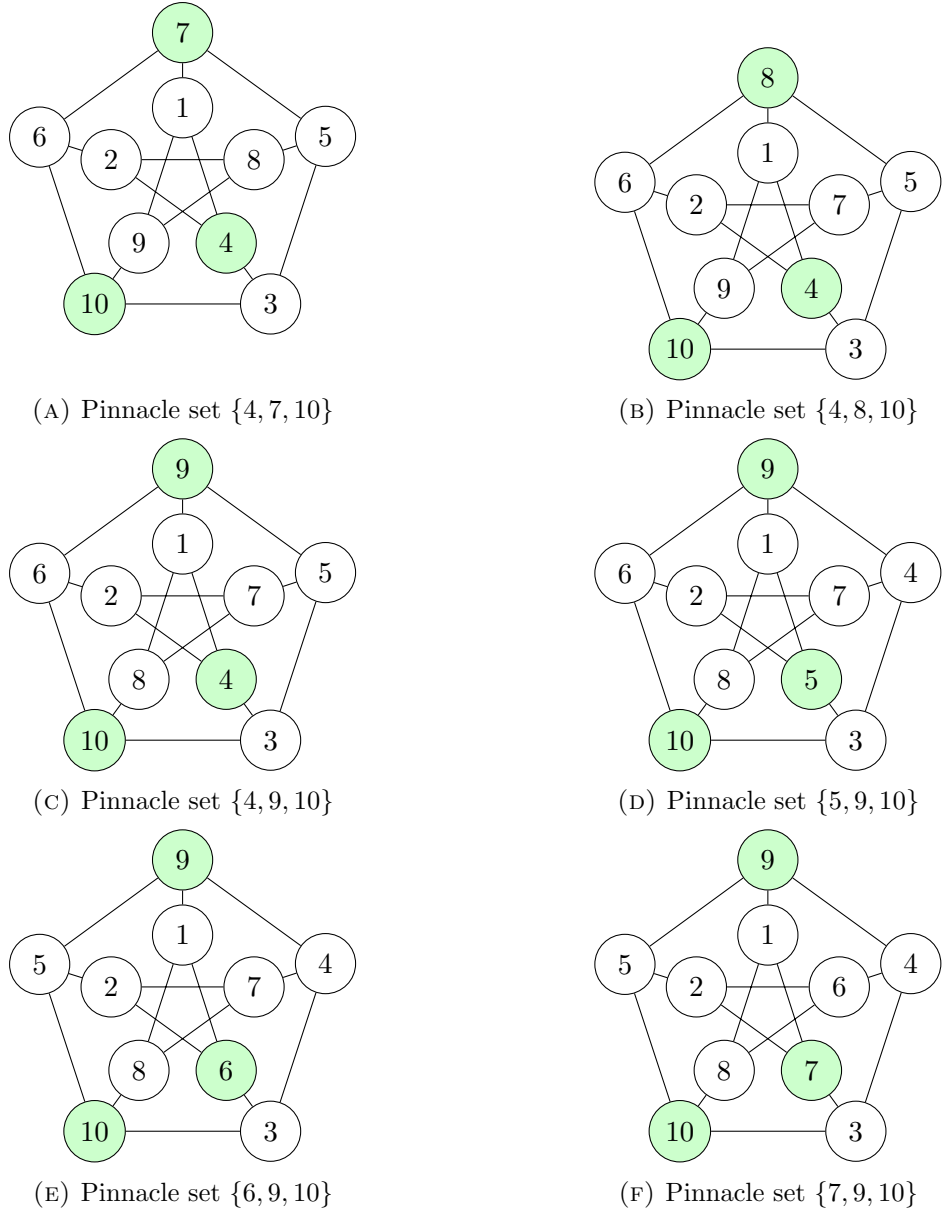

\bigskip

\begin{example}
    Let $G$ be the Petersen graph. Consider the labeling $\lambda$ of $G$ in Figure~\ref{fig:sub1}.  It is easy to check that $\{4, 7, 10 \}$ is the pinnacle set.  
    Now, $4 \leq 7$ and $7 \leq 9$ and $10 \leq 10$,  so according to Theorem~\ref{bigthm2},  $\{7, 9, 10\}$ is also a pinnacle set of the Petersen graph.  In the remaining subfigures of Figure \ref{fig:petersengraphexample}, we demonstrate the series of transformations that takes $\lambda$  to  a labeling that has $\{7, 9, 10\}$ as the pinnacle set based on the proof of Theorem~\ref{bigthm2}. 

The transformation can be summarized in two steps:  from $\{ 4, 7, 10 \}$ to $\{ 4, 9, 10 \}$ and from $\{ 4, 9, 10 \}$ to $\{7, 9, 10\}$.  Let $\lambda'$ and $\lambda''$ denote the labelings that have $\{ 4, 9, 10 \}$ and $\{7, 9, 10\}$ as pinnacle sets respectively.  The transformation from $\lambda$ to $\lambda'$ involves the vertices labeled $7, 8, 9$ in $(G, \lambda)$.  Their labels are circularly shifted in $(G, \lambda')$.  That is, the vertex labeled $7, 8, 9$ have labels $9, 7, 8$ in $(G, \lambda')$. 
Similarly, the transformation from $\lambda'$ to $\lambda''$ involves the vertices labeled $4, 5, 6, 7$ in $(G, \lambda')$.  Again, their labels are circularly shifted in $(G, \lambda'')$ to $7, 4, 5, 6$.

It is valid to ask whether at the beginning we could have hastened the transformation process by simply swapping the labels of the vertex labeled $7$ with the vertex labeled $9$.  If we did, then $9$ will indeed be a pinnacle and $7$ will not be but $8$ will also be a pinnacle. That is, the pinnacle set of the relabeled graph becomes $\{4, 8, 9, 10\}$ and not $\{4, 9, 10\}$.  

\end{example}

\begin{remark}
Theorem~\ref{bigthm2} provides another reason why (1) $\Rightarrow$ (2) in Theorem~\ref{bigthm1}.  Let $P = \{p_1, p_2, \ldots, p_k \}$ be a pinnacle set of $G$  with $p_1 < p_2 < \cdots < p_k$. 
One can readily see that 
$p_k \leq n$, $p_{k-1} \leq n-1, \ldots, p_1 \leq n-k+1$.  
By Theorem~\ref{bigthm2}, the set $\{n-k+1, \ldots, n-1, n\}$ has to be another pinnacle set of $G$.
\end{remark}

Lemma~\ref{nextpinnacleset} was about how to swap an element in a pinnacle set of $G$ with a larger one.   In Lemma~\ref{downpinnacleset} below, we consider the reverse of the process, swapping an element with a smaller one. For example, if $\{3, 6, 8, 10\}$ is a pinnacle set of $G$, and the vertices whose labels are 2 and 3 are not adjacent, then $\{2, 6, 8, 10\}$ is another pinnacle set of $G$.  
 The proof is quite similar to the one for Lemma~\ref{nextpinnacleset}.

\begin{lemma}\label{downpinnacleset}
Let $P = \{p_1, p_2, \ldots, p_k \}$ be a pinnacle set of $(G,\lambda)$ with $p_0 \leq p_1 < p_2 < \cdots < p_k$ and $p_0 = 2$.  Suppose that $p_i - p_{i-1} > 1$ for some $i$, $1 \leq i \leq k$.  Let $\lambda(w) = p_i$ and $\lambda(x) = p_i - 1$.  If $w$ and $x$ are not adjacent, then $(P \setminus \{p_i\}) \cup \{p_i - 1\}$ is a pinnacle set of $G$. 
\end{lemma}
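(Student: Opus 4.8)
The plan is to run the proof of Lemma~\ref{nextpinnacleset} in reverse, now swapping the label $p_i$ of $w$ with the smaller label $p_i-1$ of $x$. Fix a labeling $\lambda$ of $G$ with $\Pin(G,\lambda)=P$, and let $\lambda'$ be the labeling with $\lambda'(w)=p_i-1$, $\lambda'(x)=p_i$, and $\lambda'(v)=\lambda(v)$ for every other vertex $v$; write $P'=\Pin(G,\lambda')$. The target is $P'=(P\setminus\{p_i\})\cup\{p_i-1\}$. A preliminary observation: under the hypotheses $p_i-1\notin P$ (if $i\ge 2$ then $p_{i-1}\le p_i-2<p_i-1<p_i$, and if $i=1$ then $p_i-1<p_1=\min P$), and moreover $p_i-1\ge 2$ (this is where $p_0=2\le p_1$ is used, together with $p_i-p_{i-1}>1$).

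Exactly as in Lemma~\ref{nextpinnacleset}, any vertex that is neither $w$, nor $x$, nor a neighbor of one of these two retains its label and all of its neighbors' labels, hence retains its pinnacle status; so it suffices to examine (a) $w$ and its neighbors and (b) $x$ and its neighbors. For (a): since $p_i\in P$, every neighbor of $w$ has $\lambda$-label $<p_i$; since $w$ and $x$ are not adjacent, none of these labels equals $p_i-1$; hence every neighbor of $w$ has label at most $p_i-2$. It follows that $\lambda'(w)=p_i-1$ still strictly exceeds every (unchanged) neighbor label, so $p_i-1\in P'$, and every neighbor of $w$ is still dominated by $w$ and hence still not a pinnacle. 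For (b): because $p_i-1\notin P$, the vertex $x$ has a neighbor with $\lambda$-label $>p_i-1$; this neighbor is not $w$ (non-adjacency), so its label is not $p_i$ and is therefore at least $p_i+1$, and it keeps that label under $\lambda'$; thus $\lambda'(x)=p_i$ is not a pinnacle, so $p_i\notin P'$.

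It remains to check the neighbors $z$ of $x$, all of which satisfy $z\ne w$ by non-adjacency. If $\lambda(z)\in P$, then $\lambda(z)>\lambda(x)=p_i-1$, so $\lambda(z)\ge p_i+1$; the only neighbors of $z$ whose labels change lie in $\{w,x\}$, and $\lambda'(x)=p_i<\lambda(z)$ while $\lambda'(w)\le p_i<\lambda(z)$, so $z$ still dominates all its neighbors and $\lambda'(z)\in P'$. If $\lambda(z)\notin P$ and $\lambda(z)<p_i-1$, then $\lambda'(z)=\lambda(z)\le p_i-2<p_i=\lambda'(x)$, so $z$ is dominated by $x$ and is not a pinnacle under $\lambda'$. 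If $\lambda(z)\notin P$ and $\lambda(z)>p_i-1$, then $\lambda(z)\ge p_i+1$, and since $z$ is not a pinnacle under $\lambda$ it has a neighbor $z'$ with $\lambda(z')>\lambda(z)\ge p_i+1$; then $z'\notin\{w,x\}$, so $\lambda'(z')=\lambda(z')>\lambda(z)=\lambda'(z)$ and $z$ is still not a pinnacle. Assembling the cases gives $P'=(P\setminus\{p_i\})\cup\{p_i-1\}$.

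The one substantively new ingredient — and the place that needs care — is the non-adjacency hypothesis on $w$ and $x$. In Lemma~\ref{nextpinnacleset} the analogous non-adjacency was automatic from $p_i\in P$ and $p_i+1\notin P$; here it is not, and indeed if $w$ were adjacent to $x$ the conclusion would fail, since after the swap $w$ (now carrying label $p_i-1$) would be dominated by its neighbor $x$ (now carrying label $p_i$), so $p_i-1$ would not be a pinnacle of $(G,\lambda')$. The hypothesis is precisely what lets us conclude in (a) that every neighbor of $w$ has label at most $p_i-2$, and in (b) and the final case analysis that the relevant ``blocking'' neighbors are distinct from $w$; everything else is a routine transcription of the earlier argument.
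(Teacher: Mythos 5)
Your proof is correct and takes essentially the same route as the paper's: swap the labels of $w$ and $x$, observe that only $w$, $x$, and their neighbors can change status, and run the same case analysis (using non-adjacency to bound $w$'s neighbors by $p_i-2$ and to guarantee $x$'s blocking neighbor has label at least $p_i+1$). The concluding remark on why the non-adjacency hypothesis cannot be dropped is accurate but not needed for the argument.
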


\begin{proof}
Since $p_i \in P$, it follows that all the neighbors of $w$ have labels smaller than $p_i$ and are not in $P$. 
In fact, their labels are smaller than $p_i -1$ because $w$ and $x$ are not adjacent.   
On the other hand, since $p_{i} - 1 \not \in P$, $x$ must have a neighbor whose label is larger than $p_i -1$.  The said label has to be larger $p_i$ because $w$ and $x$ are not adjacent. Vertex $x$ may also have neighbors whose labels are smaller than $p_i - 1$.

Define $\lambda''$ as the label obtained from $\lambda$ by swapping the labels of $x$ and $w$ and let $P''$ be the pinnacle set of $(G, \lambda'')$. 
We note that when $v \neq w, x$ nor a neighbor of these two vertices,  the status of $\lambda(v) = \lambda''(v)$ in $P$ and $P''$ are the same.  

Consider $w$.  Its neighbors' labels did not change in $\lambda''$ so they are still smaller that $p_i - 1$.  But $\lambda''(w) = p_i -1$ so $p_i -1 \in P''$ and none of $w$'s neighbors' labels are in $P''$.  Now $x$ has a neighbor whose label is at least $p_i +1$ and $\lambda''(x) = p_i$ so $p_i \not \in P''$.  For the other neighbors $z$ of $x$, we now argue their status is the same in $\lambda$ and $\lambda'$.  If $\lambda(z) \in P$, then $\lambda''(z) = \lambda(z) > p_i$ and will not be affected by the change in label of $x$ (and $w$) so it will be in $P''$. If $\lambda(z) \not \in P$, then there are two possibilities: (i) $\lambda(z) < \lambda(x)$ or (ii) $\lambda(z) > \lambda(x)$ and $z$ has a neighbor whose label is larger than $\lambda(z)$.  For case (i),  since $\lambda''(z) = \lambda(z)$ but $\lambda''(x) > \lambda(x)$, it follows that $\lambda''(z) \not \in P''$.  For case (ii), $\lambda(z)$ and its neighbor had $\lambda$-labels larger than $p_i$.  Their labels did not change in $\lambda''$.  It follows that $\lambda''(z) = \lambda(z) \not \in P''$.  Thus, the pinnacle set of $(G, \lambda'')$ is $P'' = P - \{p_i\} \cup \{p_i - 1\}$. 
 \end{proof}

Unfortunately, we cannot prove a counterpart to Theorem~\ref{bigthm2} because Lemma~\ref{downpinnacleset} is dependent not just on the elements of $P$ but also on the labeling whose pinnacle set is $P$.

\subsection{Using ordered tree partitions of a graph}

We begin by introducing the notion of ordered tree partitions of a graph and some of its properties.  We will use it in Theorem~\ref{bigthm3} to relabel vertices so that we can derive a new pinnacle set from an old one.

Let $I = \{v_1, v_2, \ldots, v_k\}$ be the independent set of $G$ so that the labels $\lambda(v_i)$, $i = 1,2, \ldots, k$ are exactly the pinnacles of $(G,\lambda)$. 
Intuitively, this means we can partition $G$ into $k$ parts, where the $i$th part is the ``region of influence"  of $v_i \in I$.  That is, if $u$ is in this $i$th part, then there is a path from $u$ to $v_i$ so that the labels of the vertices along the path are increasing.   Thus, $v_i$'s region of influence is a connected subgraph of $G$ and $\lambda(v_i)$ is the only pinnacle in that region.  We shall describe this region by a tree $T_i$ rooted at $v_i$ and call the sequence of trees $(T_1, T_2, \ldots, T_k)$  with some extra conditions an {\it ordered tree partition of $G$}.  We define the structure of an ordered tree partition of $G$ without a labeling first.

The reader should be aware that when we discuss the relationship between two vertices $u$ and $v$, there are two contexts:  their relationship in $G$ and their relationship in the ordered tree partition of $G$.  When the context is $G$, we will say $u$ is {\it adjacent} to $v$ or $u$ is {\it neighbors} with $v$, etc.  On the other hand, when the context is a rooted tree in the ordered tree partition, we will say $u$ is a {\it child} of $v$ or $u$ is a {\it parent} of $v$.

\begin{definition}\label{def:ordered tree partition}
Let $G$ be a graph.  An {\it ordered tree partition} of $G$ is a sequence $\mathcal{T} = (T_1, T_2, \ldots, T_k)$ of subgraphs of $G$ where each $T_i$ is a tree rooted at $r_i$   so that 
\begin{itemize}
\item[(i)] $\{r_1, r_2, \ldots, r_k\}$ is an independent set of $G$

\item[(ii)] $(V(T_1), V(T_2), \ldots, V(T_k))$ is a partition of $V(G)$ and

\item[(iii)] when $u$ is neighbors in $G$ with at least one vertex in  $\{r_1, r_2, \ldots, r_k\}$, then $u$ is a child of $r_h$ with $h= \min\{i : \mbox{$u$ is adjacent to $r_i$ in $G$} \}$. 
\end{itemize}
\end{definition}

See Figures~\ref{otp1fig} and~\ref{otp2fig} for some examples. We emphasize that the ordering of the trees in $\mathcal{T}$ is important because of condition (iii).  The next proposition states some of its implications.

\begin{proposition}
Let $\mathcal{T} = (T_1, T_2, \ldots, T_k)$ be an ordered tree partition of $G$, and let $v \in V(T_i)$ with $v \neq r_i$.  
When $v$ is a child of $r_i$, then $v$ is not adjacent to $r_1, r_2, \ldots, r_{i-1}$ in $G$.  
On the other hand, when  $v$ is not a child of $r_i$, then $v$ is not adjacent to $r_1, r_2, \ldots, r_k$ in $G$. 
\end{proposition}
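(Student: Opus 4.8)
The statement to prove is a direct unpacking of condition (iii) in Definition~\ref{def:ordered tree partition}, so the plan is essentially a careful case analysis driven by that definition. Let $\mathcal{T} = (T_1, T_2, \ldots, T_k)$ be an ordered tree partition of $G$ and fix $v \in V(T_i)$ with $v \neq r_i$.

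\textbf{Case 1: $v$ is a child of $r_i$.} I want to show $v$ is not adjacent to any of $r_1, \ldots, r_{i-1}$ in $G$. Suppose toward a contradiction that $v$ is adjacent in $G$ to some $r_j$ with $j < i$; in particular the set $\{m : v \text{ is adjacent to } r_m \text{ in } G\}$ is non-empty, so by condition (iii) $v$ must be a child of $r_h$ where $h = \min\{m : v \text{ adjacent to } r_m\}$. Since $v$ is adjacent to $r_j$ with $j < i$, we get $h \le j < i$, so $h \neq i$. But $v \in V(T_i)$, and by condition (ii) the vertex sets $V(T_1), \ldots, V(T_k)$ partition $V(G)$, so $v$ belongs to $V(T_h)$ only if $h = i$ — contradiction. (A small point to spell out: being ``a child of $r_h$'' presupposes $v \in V(T_h)$, since $T_h$ is the tree containing the edge from $r_h$ to its child; the partition property then forces $h=i$.) Hence no such $j$ exists.

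\textbf{Case 2: $v$ is not a child of $r_i$.} Now I want $v$ not adjacent to \emph{any} of $r_1, \ldots, r_k$ in $G$. Again suppose for contradiction that $v$ is adjacent to some root $r_m$. Then the set $\{m : v \text{ adjacent to } r_m\}$ is non-empty, so condition (iii) applies and $v$ is a child of $r_h$ with $h = \min\{m : v \text{ adjacent to } r_m\}$. As in Case 1, ``$v$ is a child of $r_h$'' forces $v \in V(T_h)$, and the partition property (ii) gives $h = i$. But then $v$ is a child of $r_i$, contradicting the case hypothesis. So $v$ is adjacent to no root.

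I do not anticipate a genuine obstacle here — the statement is essentially a restatement of the defining condition. The only place requiring mild care is making explicit the (implicit) fact that if $v$ is declared a child of $r_h$ in the ordered tree partition, then $v$ lies in $V(T_h)$, which combined with the partition condition (ii) pins down $h = i$; that is the hinge of both cases. I would state that observation once at the start and then run the two short contradiction arguments above.
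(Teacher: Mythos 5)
Your proof is correct, and it matches the paper's intent exactly: the paper states this proposition without proof as an immediate consequence of condition (iii) of Definition~\ref{def:ordered tree partition}, and your case analysis (using the partition condition (ii) to pin down $h=i$) is precisely the unpacking the authors leave to the reader.
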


In the next two lemmas, we formalize the ideas described earlier  by stating explicit connections between a labeled graph's pinnacles and their corresponding vertices' regions of influence.

\begin{figure}
\begin{center}
 
     \begin{tikzpicture}[roundnode/.style={circle, draw=black,  minimum size=.5mm}]
    \node[roundnode,fill=green!20](A1) at (0,-2){3};
    \node at (-0.66,-2){$r_1$};
    \node[roundnode](A2) at (0,-1){1};
    \node[roundnode](A3) at (1,-1.5){2};
    \node[roundnode,fill=green!20](A4) at (2,-1.5){6};
    \node at (2,-2.2){$r_2$};
    \node[roundnode](A5) at (3,-1.5){4};
    \node[roundnode](A6) at (4,-1.5){5};
    \node[roundnode](A7) at (5,-2){7};
    \node[roundnode,fill=green!20](A8) at (5,-1){8};
    \node at (5.66,-1){$r_3$};
   
    \draw (A3)--(A2)--(A1)--(A3)--(A4)--(A5)--(A6)--(A7)--(A8)--(A6);
    \node[roundnode,fill=green!20](B1) at (0,-4.5){3};
    \node at (-0.66,-4.5){$r_1$};
    \node[roundnode](B2) at (0,-3.5){1};
    \node[roundnode](B3) at (1,-4){2};
    \node[roundnode,fill=green!20](B4) at (2,-4){6};
    \node at (2,-4.7){$r_2$};
    \node[roundnode](B5) at (3,-4){4};
    \node[roundnode](B6) at (4,-4){5};
    \node[roundnode](B7) at (5,-4.5){7};
    \node[roundnode,fill=green!20](B8) at (5,-3.5){8};
    \node at (5.66,-3.6){$r_3$};
    
    \draw[->] (B3)--(B1);
    \draw[->] (B2)--(B1);
    \draw (B2)--(B3)--(B4);
    \draw[->] (B5)--(B4);
    \draw (B5)--(B6)--(B7);
    \draw[->] (B6)--(B8);
    \draw[->] (B7)--(B8);

    \node[roundnode,fill=green!20](A1) at (8.5,-2){};
    \node at (9,-2){$r_1$};
    \node[roundnode,fill=green!20](A2) at (10.5,-2){};
    \node at (11,-2){$r_2$};
    \node[roundnode,fill=green!20](A3) at (12.5,-2){};
    \node at (13,-2){$r_3$};

    \node[roundnode](A12) at (8,-3){};
    \node[roundnode](A13) at (9,-3){};
    \node at (8.5,-4){$T_1$};

    \node[roundnode](A21) at (10.5,-3){};
    \node at (10.5,-4){$T_2$};

    \node[roundnode](A31) at (12,-3){};
    \node[roundnode](A32) at (13,-3){};
    \node at (12.5,-4){$T_3$};

    \draw [<-](A1)--(A12);
    \draw [<-](A1)--(A13);
    \draw [<-](A2)--(A21);
    \draw [<-](A3)--(A31);
    \draw [<-](A3)--(A32);

\end{tikzpicture}

\end{center}
\caption{Consider the labeling $\lambda$ of graph $G$ on the top left.  Its pinnacle set is $\{3, 6, 8\}$.  Denote as $r_1, r_2, r_3$ the vertices whose labels are $3, 6, 8$, respectively. In the bottom left, the vertices whose labels are not pinnacles are assigned {\it parents} according to the rules described in the proof of Lemma~\ref{otp1}. The result is an ordered tree partition $(T_1, T_2, T_3)$ of $G$ shown on the right.  According to Lemma~\ref{otp1}, $T_1$, $T_1 \cup T_2$ and $T_1 \cup T_2 \cup T_3$ are guaranteed to have at most $3, 6$, and $8$ vertices respectively.} 
\label{otp1fig}
\end{figure}
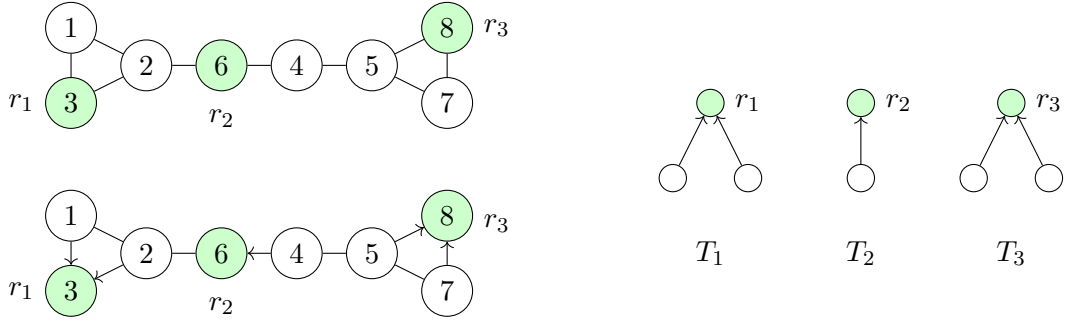

\begin{lemma}
\label{otp1}
Let  $\lambda$ be a labeling of $G$ so that  $Q = \{q_1, q_2, \ldots, q_k \}$ is a pinnacle set of $(G, \lambda)$ with $q_1 <  q_2 < \cdots < q_k$. Then $G$ has an ordered tree partition $\mathcal{T}_Q = (T_1, T_2, \ldots, T_k)$ so that   $ \sum_{j = 1}^i |V(T_j)| \leq q_i$ for all $1\leq i\leq k$.  \end{lemma}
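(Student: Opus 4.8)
The plan is to construct the ordered tree partition $\mathcal{T}_Q$ directly from the labeling $\lambda$, following the intuitive ``region of influence'' idea described before the lemma, and then to verify the size bound $\sum_{j=1}^i |V(T_j)| \le q_i$ by a counting argument. Let $v_1, v_2, \ldots, v_k$ be the vertices with $\lambda(v_i) = q_i$, so that $I = \{v_1, \ldots, v_k\}$ is precisely the independent set of vertices whose labels are the pinnacles of $(G,\lambda)$ (this set is independent by Proposition~\ref{observe1}). Set each root $r_i := v_i$. For every non-pinnacle vertex $u$, I will assign $u$ a parent as follows: first, if $u$ is adjacent in $G$ to some root, let $u$ be a child of $r_h$ with $h = \min\{i : u \text{ adjacent to } r_i\}$, as forced by condition (iii) of Definition~\ref{def:ordered tree partition}. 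Otherwise, since $\lambda(u)$ is not a pinnacle, $u$ has a neighbor with a strictly larger label; let $\mathrm{parent}(u)$ be a neighbor of $u$ with the largest label among all neighbors of $u$. I then need to check that this parent assignment actually yields a forest whose trees are rooted at $r_1, \ldots, r_k$: walking from any vertex to its parent strictly increases the label (in the ``otherwise'' case by construction; in the root-adjacency case because a root's label is a pinnacle and hence exceeds that of its neighbor $u$), so there are no cycles and every ascending path terminates at a vertex with locally maximal label, i.e.\ at a pinnacle, i.e.\ at some $r_i$. Call $T_i$ the tree of vertices whose ascending path ends at $r_i$. Conditions (i)--(iii) of Definition~\ref{def:ordered tree partition} then hold by construction.

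The core of the argument is the size bound. Fix $i$ and consider $W := V(T_1) \cup V(T_2) \cup \cdots \cup V(T_i)$; I want $|W| \le q_i$, equivalently every vertex of $W$ has label at most $q_i$. Take $u \in W$, say $u \in V(T_j)$ with $j \le i$. Consider the ascending path from $u$ up to the root $r_j$: its labels form a strictly increasing sequence ending at $\lambda(r_j) = q_j \le q_i$. Hence $\lambda(u) \le q_j \le q_i$. So every label appearing in $W$ is at most $q_i$, which forces $|W| \le |\{1, 2, \ldots, q_i\}| = q_i$. Summing, $\sum_{j=1}^i |V(T_j)| = |W| \le q_i$, as desired.

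The one point requiring care — and the main obstacle — is confirming that the parent-assignment genuinely respects condition (iii): a vertex $u$ adjacent to a root must be placed as a child of the \emph{least-indexed} root it neighbors, but such a $u$ might also have a non-root neighbor of even larger label, so the ``largest-label neighbor'' rule and the ``least-indexed adjacent root'' rule can conflict. The resolution is to give the root-adjacency rule priority (as I did above), and then re-examine the ascending-path argument: if $u$ is a child of root $r_h$, the step from $u$ to $r_h$ still increases the label because $q_h = \lambda(r_h) > \lambda(u)$ (a root's label is a pinnacle, larger than all its neighbors' labels, and $u$ is such a neighbor). So the label still strictly increases along every parent edge, and the forest/termination argument goes through unchanged; moreover for the size bound we only used that labels increase along ascending paths, which remains true. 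A secondary check is that distinct trees are vertex-disjoint and cover $V(G)$: disjointness holds because each vertex has a unique ascending path (the parent function is single-valued) ending at a unique root, and coverage holds because every vertex — pinnacle or not — lies on such a path. With these verifications in place the lemma follows.
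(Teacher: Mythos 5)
Your proposal is correct and follows essentially the same route as the paper: you define the same roots, use the same two-case parent assignment (least-indexed adjacent root taking priority, otherwise a larger-labeled neighbor), and prove the size bound by the same observation that labels strictly increase along parent edges, so every label in $T_1\cup\cdots\cup T_i$ is at most $q_i$. The only cosmetic difference is that you pick the \emph{largest}-labeled neighbor as parent where the paper picks an arbitrary larger-labeled one, which changes nothing.
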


\begin{proof}
For $i = 1, 2,\ldots, k$, let $r_i$ be the vertex whose label is $q_i$; i.e., $\lambda(r_i) = q_i$.  We now create an ordered tree partition of $G$ by assigning each $v \not \in \{r_1, r_2, \ldots, r_k\}$ a {\it parent} using the following rules:
\begin{enumerate}
\item If $v$ is not adjacent to any of the $r_i$'s in $G$, let $parent(v)$ be a neighbor of $v$ whose label is larger than $\lambda(v)$.  Such a neighbor exists because $\lambda(v)$ is not a pinnacle of $(G, \lambda)$.

\item Otherwise, let $h = \min\{i: \mbox{$v$ is adjacent to $r_i$ in $G$} \}$ and set $parent(v) = r_h$.  
\end{enumerate}
This $parent$ assignment  induces a forest on $G$ with exactly $k$ trees because $r_1, r_2, \ldots, r_k$ are the only vertices with  no parents.  For $i = 1, 2,\ldots, k$, let $T_i$ denote the tree whose root is $r_i$.   

Since $\{r_1, r_2, \ldots, r_k\}$ contains  all the vertices whose labels are pinnacles of $(G,\lambda)$, it follows that the set is an independent set of $G$.  Moreover, because every vertex not in  $\{r_1, r_2, \ldots, r_k\}$ has a parent, each one is part of some tree and the trees do not share any vertices.  Thus, $(V(T_1),  V(T_2),$ $ \ldots,  V(T_k))$ is a partition of $V(G)$.  Lastly, rule 2 above matches condition (iii) of the definition of ordered tree partitions.  Thus,  $(T_1, T_2, \ldots, T_k)$ is an ordered tree partition of $G$.  See Figure~\ref{otp1fig} for an example.

 Next, consider an arbitrary tree $T_i$ and $v \in V(T_i)$.  Rule 1 above guarantees that when $parent(v)$ exists, $\lambda(v) < \lambda(parent(v))$.  By transitivity, every vertex in $T_i$ has a label that is less than or equal to $\lambda(r_i) = q_i$. This same observation holds for trees $T_1, \ldots, T_{i-1}$.  In each of these trees, the labels of the vertices are at most the label of the root node.  But $q_1 < q_2 < \cdots < q_i$ too.  So it follows that all of the vertices in $T_1 \cup T_2 \cup \cdots \cup T_i$ have labels less than or equal to $q_i$.  Since these labels are distinct, we conclude that $\sum_{j = 1}^i |V(T_j)| \leq q_i$ for $i = 1,2, \ldots, k$. \end{proof}

We note that construction described in the proof of Lemma \ref{otp1} does not yield a unique ordered tree partition because the {\it parent} assignment of $v$, when $v$ is not adjacent to some $r_i$, is not unique.  Nonetheless, {\it any} ordered tree partition obtained by the construction satisfies the lemma. 

\begin{lemma}
\label{otp2} 
 Let $\mathcal{T} = (T_1, T_2, \ldots, T_k)$ be an ordered tree partition of $G$.  Let $p_i = \sum_{j = 1}^i |V(T_j)|$ for 
 $i = 1,2, \ldots, k$.   
 Then $G$ has a labeling $\lambda$ so that  $P = \{p_1, p_2, \ldots, p_k \}$ is the pinnacle set of $(G, \lambda)$. 
 Moreover, $\lambda$ has the following properties:   $\lambda(r_i) = p_i$ for $i = 1, 2,\ldots, k$ and  the vertices in $T_i$ are assigned the labels $p_{i-1} +1,  p_{i-1} + 2, \ldots, p_i$, where $p_0 = 0$, for $i = 1, 2,\ldots, k$. 
 \end{lemma}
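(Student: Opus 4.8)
The plan is to build $\lambda$ one tree at a time. Write $L_i = \{p_{i-1}+1, p_{i-1}+2, \ldots, p_i\}$ (with $p_0 = 0$), so that $|L_i| = |V(T_i)|$ and $L_1, L_2, \ldots, L_k$ partition $[n]$, where $n = p_k = |V(G)|$. Since each $T_i$ is a connected subgraph of $G$ that is a tree rooted at $r_i$, every vertex of $T_i$ is reachable from the independent set $\{r_i\}$, so I would take the restriction of $\lambda$ to $V(T_i)$ to be a basic labeling of $(T_i, L_i, \{r_i\})$ in the sense of Definition~\ref{def:basiclabeling}. By condition (ii) of Definition~\ref{def:ordered tree partition} the vertex sets $V(T_i)$ partition $V(G)$, so these restrictions assemble into a bijection $\lambda : V(G) \to [n]$, and by construction $\lambda(r_i)$ is the largest label in $L_i$, namely $p_i$, while the vertices of $T_i$ receive exactly the labels of $L_i$; these are the two ``moreover'' properties.

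Next I would check that no non-root vertex is a pinnacle of $(G,\lambda)$. In a basic labeling of $(T_i, L_i, \{r_i\})$ the labels strictly decrease as one moves away from $r_i$, so any $v \in V(T_i)$ with $v \neq r_i$ has a neighbor in $T_i$ — one step closer to $r_i$ — carrying a strictly larger label; since $T_i$ is a subgraph of $G$, that vertex is also a $G$-neighbor of $v$, so $v$ is not a pinnacle of $(G,\lambda)$. This step is immediate and is insensitive to the ordering of the trees.

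The substantive step, and the one that actually uses condition (iii) of Definition~\ref{def:ordered tree partition}, is that each root $r_i$ \emph{is} a pinnacle, i.e., every $G$-neighbor of $r_i$ has label smaller than $p_i$. First note $p_1 < p_2 < \cdots < p_k$ since each $|V(T_j)| \geq 1$. By condition (i) the $r_j$ are pairwise non-adjacent, so no neighbor of $r_i$ is a root. Let $u$ be a $G$-neighbor of $r_i$; since $u$ is adjacent to the root $r_i$, condition (iii) forces $u$ to be a child of $r_h$ where $h = \min\{j : u \text{ is adjacent to } r_j \text{ in } G\} \leq i$. If $h = i$, then $u$ is a child of $r_i$ in $T_i$, so $\lambda(u) < \lambda(r_i) = p_i$ by the decreasing-away-from-the-root property. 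If $h < i$, then $u \in V(T_h)$, so $\lambda(u) \leq p_h \leq p_{i-1} < p_i$. In both cases $\lambda(u) < \lambda(r_i)$, so $r_i$ is a pinnacle. Combined with the previous paragraph, the pinnacle set of $(G,\lambda)$ is exactly $P = \{p_1, \ldots, p_k\}$.

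I expect the only real obstacle to be invoking condition (iii) correctly in the root case: one must observe that every neighbor of $r_i$ lies in some $T_h$ with $h \leq i$ (and is a child of $r_h$ there), and then combine the per-tree monotonicity with the strict monotonicity of the sequence $(p_i)$ to push those neighbors' labels below $p_i$. Everything else — well-definedness of $\lambda$, the non-root case, and the bookkeeping of which labels land in which tree — is routine.
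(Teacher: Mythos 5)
Your proposal is correct and follows essentially the same route as the paper's proof: block the labels into sets $L_i$ of size $|V(T_i)|$, apply a basic labeling of $(T_i, L_i, \{r_i\})$ tree by tree, rule out non-root pinnacles via the parent's larger label, and use condition (iii) to show every $G$-neighbor of $r_i$ is a child of some $r_h$ with $h \leq i$ and hence has label below $p_i$. The only difference is that you spell out the $h=i$ and $h<i$ cases separately, which the paper handles in one line via $\lambda(w) < \lambda(r_h) \leq \lambda(r_i)$.
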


\begin{proof}
Let $L_1 = \{1, 2, \ldots, p_1\}$ and for $i= 2$ to $k$, let $L_i = \{p_{i-1} +1,  p_{i-1} + 2, \ldots, p_i \}$.  Notice that the number of labels in $L_i$ is exactly the number of vertices in $T_i$.  We now construct a labeling $\lambda$ of $G$ sequentially:  for $i = 1$ to $k$,  do a basic labeling of $(T_i, L_i, \{r_i\})$ as described in Definition~\ref{def:basiclabeling}.  Thus, $\lambda(r_i) = p_i$ and the rest of the vertices in $T_i$ are assigned the labels $p_{i-1} +1,  p_{i-1} + 2$ to $p_{i} - 1$.  See Figure~\ref{otp2fig} for an example.

Next, consider the labels of the vertices in the context of graph $G$.  When $v \in V(T_i)$ and $v \neq r_i$, $\lambda(v)$ is definitely not a pinnacle of $(G,\lambda)$ because the parent of $v$ in $T_i$ has a larger label.  
When $v = r_i$,  let $w$ be one of its neighbors.  By condition (iii) of the definition of an ordered tree partition, $w$ is a child of $r_1, r_2, \ldots, r_{i-1}$ or $r_i$.   Let us say $w$ is a child of $r_h$.  Then $\lambda(w) <  \lambda(r_h) \leq  \lambda(r_i)$. Thus, $\lambda(r_i)$  is larger than the labels of its neighbors in $G$ so $\lambda(r_i)$ is a pinnacle of $(G, \lambda)$.  It follows that $\{\lambda(r_1), \lambda(r_2), \ldots, \lambda(r_k) \} = \{p_1, p_2, \ldots, p_k \}$ is exactly the pinnacle set of $(G,\lambda)$.
\end{proof}

The next theorem will take advantage of the labeling $\lambda$ in Lemma \ref{otp2} because it has a nice structure: whenever two vertices $u$ and $v$ belong to two different trees $T_i$ and $T_j$, $\lambda(u) < \lambda(v)$ if and only if $i < j$.

\begin{figure}[t]
\begin{center}
 \resizebox{5in}{!}{\begin{tabular}{p{2in}p{2in}}
    \begin{tikzpicture}[roundnode/.style={circle, draw=black, inner sep=0pt,  minimum size=5mm}]
        \node[roundnode](A1) at (0,0){$v_1$};
        \node[roundnode](A2) at (0,1){$v_2$};
        \node[roundnode](A3) at (1,0.5){$v_3$};
        \node[roundnode,fill=green!20](A4) at (2,0.5){$r_1$};
        \node[roundnode](A5) at (3,0.5){$v_4$};
        \node[roundnode](A6) at (4,0.5){$v_5$};
        \node[roundnode,fill=green!20](A7) at (5,1){$r_2$};
        \node[roundnode](A8) at (5,0){$v_6$};
        \draw (A1)--(A2);
        \draw (A5)--(A6)--(A8);
        \draw (A1)--(A3);
        \draw (A2)--(A3);
        \draw (A3)--(A4);
        \draw (A5)--(A4);
        \draw (A6)--(A7);
        \draw (A8)--(A7);
    \end{tikzpicture}
    &
    \qquad\qquad
    \begin{tikzpicture}[roundnode/.style={circle, draw=black, inner sep=0pt,  minimum size=5mm}]
        \node[roundnode,fill=green!20](A1) at (0,2){$r_1$};
        \node[roundnode](A2) at (-0.5,1){$v_3$};
        \node[roundnode](A3) at (0.5,1){$v_4$};
        \node[roundnode](A4) at (-1,0){$v_1$};
        \node[roundnode](A5) at (0,0){$v_2$};
        \draw[->] (A5)--(A2);
        \draw[->] (A4)--(A2);
        \draw[->] (A3)--(A1);
        \draw[->] (A2)--(A1);

        \node[roundnode,fill=green!20](B1) at (1.75,1){$r_2$};
        \node[roundnode](B2) at (1.25,0){$v_5$};
        \node[roundnode](B3) at (2.25,0){$v_6$};
        
        \draw[->] (B2)--(B1);
        \draw[->] (B3)--(B1);
    \end{tikzpicture}
    \\
&\\
    \begin{tikzpicture}[roundnode/.style={circle, draw=black, inner sep=0pt,  minimum size=5mm}]
        \node[roundnode,fill=green!20](A1) at (0,1){$5$};
        
        \node[roundnode](A2) at (-0.5,0){4};
        \node[roundnode](A3) at (0.5,0){3};
        \node[roundnode](A4) at (-1,-1){2};
        \node[roundnode](A5) at (-0,-1){1};
        \draw[->] (A5)--(A2);
        \draw[->] (A4)--(A2);
        \draw[->] (A3)--(A1);
        \draw[->] (A2)--(A1);

        \node[roundnode,fill=green!20](B1) at (1.75,0){$8$};
        \node[roundnode](B2) at (1.25,-1){7};
        \node[roundnode](B3) at (2.25,-1){6};
        \draw[->] (B2)--(B1);
        \draw[->] (B3)--(B1);
     \end{tikzpicture}&

     \begin{tikzpicture}[roundnode/.style={circle, draw=black, inner sep=0pt,  minimum size=5mm}]
     \node at (6.3,2.7){$T_1$};
     \node at (8.3,2.7){$T_2$};
         \node[roundnode](A1) at (4,0){2};
        \node[roundnode](A2) at (4,1){1};
        \node[roundnode](A3) at (5,0.5){4};
        \node[roundnode,fill=green!20](A4) at (6,0.5){5};
        \node[roundnode](A5) at (7,0.5){3};
        \node[roundnode](A6) at (8,0.5){7};
        \node[roundnode,fill=green!20](A7) at (9,1){8};
        \node[roundnode](A8) at (9,0){6};
        \draw (A1)--(A2);
        \draw (A5)--(A6)--(A8);
        \draw (A1)--(A3);
        \draw (A2)--(A3);
        \draw (A3)--(A4);
        \draw (A5)--(A4);
        \draw (A6)--(A7);
        \draw (A8)--(A7);
    \end{tikzpicture}
    \\
\end{tabular}}
\end{center}
\caption{On the top row is graph $G$ and one of its ordered tree partition $(T_1, T_2)$. 
On the second row, the vertices of the trees are labeled as described in the proof of Lemma~\ref{otp2}, resulting in the labeling of graph $G$ on the right. Since $T_1$ and $T_2$ have $5$ and $3$ vertices respectively, according to Lemma~\ref{otp2}, the pinnacles of the labeled graph are $5$ and $5+3 = 8$.}  
\label{otp2fig}
\end{figure}
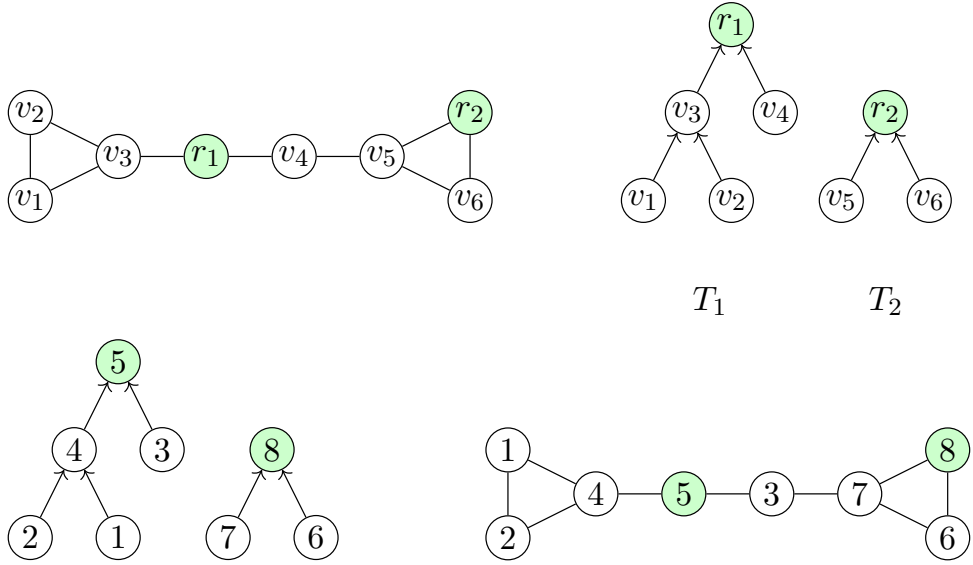

\begin{theorem}
\label{bigthm3}
Let  $\{q_1, q_2, \ldots, q_k \}$ be a pinnacle set of $G$ with $q_1 <  q_2 < \cdots < q_k$.  If $G$ is connected, then $\{q_i, q_{i+1}, \ldots, q_k\}$ is also a pinnacle set of $G$ for $i = 2,3, \ldots k$. 
\end{theorem}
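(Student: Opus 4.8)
The plan is to isolate and prove the single reduction step --- \emph{if $G$ is connected, $\{q_1,q_2,\ldots,q_k\}$ with $q_1<\cdots<q_k$ is a pinnacle set of $G$, and $k\ge 2$, then $\{q_2,q_3,\ldots,q_k\}$ is a pinnacle set of $G$} --- and then apply it repeatedly. Granting the step, for a fixed $i\in\{2,\ldots,k\}$ one applies it $i-1$ times, to $G$ with the pinnacle sets $\{q_1,\ldots,q_k\},\{q_2,\ldots,q_k\},\ldots,\{q_{i-1},\ldots,q_k\}$ in turn; every set to which the step is applied has size at least two (the smallest is $\{q_{i-1},\ldots,q_k\}$, of size $k-i+2\ge 2$), and $G$ never changes, so the hypotheses stay satisfied and the end result is that $\{q_i,q_{i+1},\ldots,q_k\}$ is a pinnacle set of $G$.

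For the reduction step I would first invoke Lemma~\ref{otp1} on a labeling realizing $\{q_1,\ldots,q_k\}$, obtaining an ordered tree partition $\mathcal{T}=(T_1,\ldots,T_k)$ with roots $r_1,\ldots,r_k$ and $p_m:=\sum_{j=1}^m|V(T_j)|\le q_m$ for all $m$. The goal is to rebuild $\mathcal{T}$ into an ordered tree partition $(\widehat T_2,\ldots,\widehat T_k)$ of $G$ with the \emph{same} roots $r_2,\ldots,r_k$ and with $V(\widehat T_j)\supseteq V(T_j)$ for each $j\ge 2$, by dispersing the vertices of $T_1$ among the other trees. Condition (iii) of Definition~\ref{def:ordered tree partition} forces every neighbor of $r_1$ to be a child of $r_1$, so no vertex outside $V(T_1)$ is adjacent to $r_1$; it also forces every $v\in V(T_1)\setminus\{r_1\}$ adjacent to some $r_j$ with $j\ge 2$ to be a child of $r_1$ --- call these the \emph{forced} vertices. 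For a forced vertex $c$ put $g(c)=\min\{j\ge 2 : c\sim r_j\}$ and relocate $c$, together with its entire $T_1$-subtree, into the tree of $r_{g(c)}$, attaching $c$ as a child of $r_{g(c)}$. What is left of $T_1$ after removing all these subtrees is a subtree $U$ rooted at $r_1$: it is nonempty (it contains $r_1$), it is a proper subset of $V(G)$ (since $T_2,\ldots,T_k$ are nonempty), and by the structural proposition following Definition~\ref{def:ordered tree partition} none of its vertices is adjacent to any of $r_2,\ldots,r_k$. Since $G$ is connected there is an edge $\{u,w\}$ of $G$ with $u\in V(U)$, $w\notin V(U)$; I would re-root $U$ at $u$ and absorb it into the tree currently containing $w$ by making $u$ a child of $w$.

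The verification splits into checking that $(\widehat T_2,\ldots,\widehat T_k)$ is an ordered tree partition and that it suffices. Conditions (i) and (ii) of Definition~\ref{def:ordered tree partition} are immediate, since $\{r_2,\ldots,r_k\}$ lies inside an independent set and the vertices have merely been regrouped. For condition (iii): a vertex outside $V(T_1)$ keeps its parent, and as it is not adjacent to $r_1$ the smallest-indexed root it meets already lies among $r_2,\ldots,r_k$, so its old assignment still works; each forced vertex $c$ is now a child of $r_{g(c)}$ with $g(c)=\min\{j\ge 2:c\sim r_j\}$; and every remaining vertex of $V(T_1)$ (those in $U$ or strictly below a forced root) is adjacent to no root, so (iii) is vacuous. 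Because the $\widehat T_j$ partition $V(G)$ and $V(\widehat T_j)\supseteq V(T_j)$, for each $m$ we get $\sum_{l=2}^m|V(\widehat T_l)| = n-\sum_{l=m+1}^k|V(\widehat T_l)| \le n-\sum_{l=m+1}^k|V(T_l)| = p_m \le q_m$, so the partial sums $p'_1<\cdots<p'_{k-1}$ of $(\widehat T_2,\ldots,\widehat T_k)$ satisfy $p'_{m-1}\le q_m$ for $m=2,\ldots,k$. By Lemma~\ref{otp2}, $\{p'_1,\ldots,p'_{k-1}\}$ is a pinnacle set of $G$, and since its $j$-th smallest element $p'_j$ is at most $q_{j+1}$ --- the $j$-th smallest element of $\{q_2,\ldots,q_k\}$ --- Theorem~\ref{bigthm2} gives that $\{q_2,\ldots,q_k\}$ is a pinnacle set of $G$. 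Iterating then proves the theorem.

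The main obstacle is the reconstruction of the ordered tree partition: one must pin down exactly which vertices of $T_1$ are \emph{forced} into which later tree (this is precisely where condition (iii) and the proposition after Definition~\ref{def:ordered tree partition} are needed), and then argue that the unconstrained leftover block $U$ can always be hung back onto the rest of $G$. That reattachment step is the one and only place connectivity is used, and it is genuinely necessary: by Corollary~\ref{bigthmcor2} a disconnected graph cannot shrink a pinnacle set below the number of its components, so the statement fails without the connectivity hypothesis. Everything else --- the partial-sum bookkeeping $V(\widehat T_j)\supseteq V(T_j)$ and the appeals to Lemmas~\ref{otp1} and~\ref{otp2} and to Theorem~\ref{bigthm2} --- is routine.
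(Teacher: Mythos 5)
Your proposal is correct, and it reaches the conclusion by a genuinely different route from the paper. The paper keeps the $k$-tree partition from Lemma~\ref{otp1} intact and instead modifies the \emph{labeling}: it re-roots $T_1$ at a vertex $x$ that is adjacent (by connectivity) to some $y$ outside $T_1$, re-applies a basic labeling of $(T_1,L_1,\{x\})$ while freezing the labels on $T_2,\dots,T_k$, and then checks directly that the pinnacle at $p_1$ is killed and no other pinnacle status changes; crucially, it never needs the modified structure to remain an ordered tree partition (and remarks explicitly that it is not one). You instead perform surgery on the \emph{partition}: you graft the forced subtrees of $T_1$ onto the later roots dictated by condition~(iii), hang the leftover block $U$ wherever connectivity permits, and verify that the result is an honest $(k-1)$-tree ordered tree partition, so that Lemma~\ref{otp2} can be invoked as a black box. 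Your route costs more combinatorial care up front --- the forced-vertex analysis and the condition~(iii) audit are exactly the delicate points, and you handle them correctly --- but it buys you a reusable structural fact (every connected graph admitting a $k$-tree ordered tree partition admits a $(k-1)$-tree one with weakly larger suffix sums) and spares you the hand verification of which labels are pinnacles. Both arguments use connectivity at the single analogous step of attaching a piece of $T_1$ to the rest of the graph, and both close with the same appeal to Theorem~\ref{bigthm2} and the same iteration.
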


For example, if $\{3, 6, 8, 10\}$ is a pinnacle set of $G$, then so are $\{6, 8, 10\}, \{8, 10\}$ and $\{10\}$. 

\begin{proof}
Let $\{q_1, q_2, \ldots, q_k \}$ be a pinnacle set of $G$. We will argue that $\{q_2, q_3, \ldots, q_k \}$ is also a pinnacle set of $G$. Applying the argument repeatedly, we conclude that $\{q_i, \ldots, q_k\}$ for $i = 2,3, \hdots, k$ are also pinnacle sets of $G$. 

By  Lemma~\ref{otp1}, $G$ has an ordered tree partition  $\mathcal{T} = (T_1, T_2, \ldots, T_k)$  so that for $p_i \leq q_i$ for $i = 1, 2,\ldots, k$, where $p_i = \sum_{j = 1}^i |V(T_j)|$.  
By Lemma~\ref{otp2}, $G$ has a labeling $\lambda$ so  that $\{p_1,  p_2, \ldots, p_k\}$ is the pinnacle set of $(G, \lambda)$.  Additionally, $\lambda$ has the property that (i) $\lambda(r_i) = p_i$ where $r_i$ is the root of $T_i$ for $i = 1, 2, \ldots, k$ and (ii) for any $u \in V(T_1)$ and $v \in V(T_j)$,  $\lambda(u) < \lambda(v)$ whenever $j > 1$.   In what follows, we will relabel the vertices of $T_1$ so that none of the labels in $T_1$, including $p_1$, are pinnacles.

Since $G$ is connected, there is an $x \in V(T_1)$ that is adjacent to some $y \in V(T_j)$, $j > 1$, in $G$.  Let $L_1$ be the set of labels used by $\lambda$ on the vertices of $T_1$. Recall that $L_1 = \{1, 2, \ldots, p_1\}$. 
We now relabel the vertices of $T_1$ as follows:   Root $T_1$ at $x$ instead of $r_1$.  Then apply the basic labeling of $(T_1, L_1, \{x\})$.  Keep the labeling of the vertices in $T_2, T_3, \ldots, T_k$ the same. Denote as $\lambda'$ this new labeling of $G$.  We will argue that the pinnacle set of $(G, \lambda')$ is $\{p_2, p_3, \ldots, p_k\}$.  

First, we note that because of how the basic labeling of $(T_1, L_1, \{x\})$ works, except for $\lambda'(x) =p_1$, no other labels in $L_1$ can be a pinnacle of $(G, \lambda')$.  But $x$ is adjacent to $y$ and $p_1 < \lambda(y) = \lambda'(y)$ because $y$ is in a tree other than $T_1$.   So it follows that $\lambda'(x) = p_1$ is {\it not} a pinnacle of $(G, \lambda')$. 

Next, consider $T_i$, $i > 1$.  Since the labels of its vertices were unchanged from $\lambda$ to $\lambda'$, it follows that for every vertex $v \in V(T_i)$ such that $v \neq r_i$, $\lambda'(v)$ is not a pinnacle of $(G, \lambda')$.  As for $r_i$, if it has a neighbor $w$ in $T_1$, we know that $\lambda'(r_i) = \lambda(r_i)$ is larger than all the labels in $L_1$; i.e., $\lambda'(r_i)> \lambda'(w)$.   Thus, $\lambda'(r_i) = p_i$ is still a pinnacle of $(G, \lambda')$.  

We have shown that $\{p_2, p_3, \ldots, p_k\}$ is a pinnacle set of $G$.  Since $p_i \leq q_i$ for $i = 2, 3,\ldots, k$, by Theorem~\ref{bigthm2}, $\{q_2, q_3, \ldots, q_k\}$ is a pinnacle set of $G$ too.  \end{proof}

In Figure \ref{otp2fig}, the graph labeling has the properties described in Lemma \ref{otp2} and its pinnacle set is $\{5,8\}$.  According to the proof of Theorem \ref{bigthm3}, to create a labeling whose pinnacle set is $\{8\}$, we need to a find a vertex $x$ in $T_1$ that is adjacent to a vertex $y$ in $T_2$. Thus, we let $x = v_4$ and $y = v_5$. $T_1$ is then re-oriented so that $v_4$ becomes the new root and a basic labeling starting at $v_4$ using the labels $1, 2, \hdots, 5$ is applied.  The result is shown in Figure \ref{otp3fig}.  We note that the pair formed by the modified $T_1$ and $T_2$ is no longer a valid ordered tree partition of the graph.

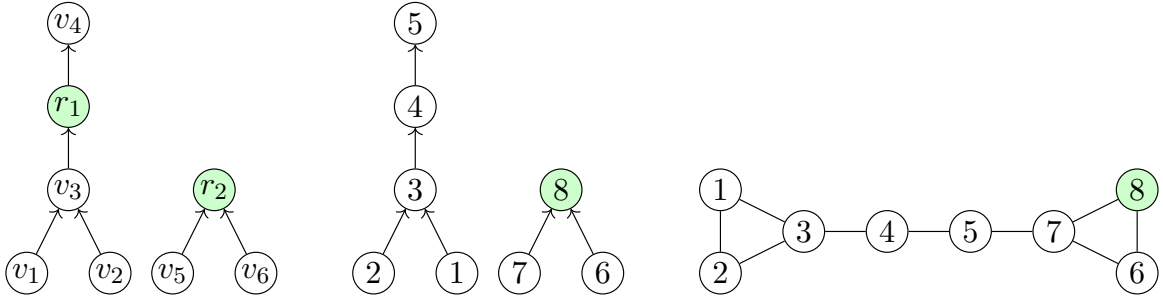
\begin{figure}[t]
\begin{center}
 \resizebox{6in}{!}{\begin{tabular}{p{1.5in}p{1.5in}p{2in}}
\begin{tikzpicture}[roundnode/.style={circle, draw=black, inner sep=0pt,  minimum size=5mm}]
        \node[roundnode,fill=green!20](A1) at (0,2){$r_1$};
        \node[roundnode](A2) at (0,1){$v_3$};
        \node[roundnode](A3) at (0,3){$v_4$};
        \node[roundnode](A4) at (-0.5,0){$v_1$};
        \node[roundnode](A5) at (0.5,0){$v_2$};
        \draw[->] (A5)--(A2);
        \draw[->] (A4)--(A2);
        \draw[->] (A1)--(A3);
        \draw[->] (A2)--(A1);

        \node[roundnode,fill=green!20](B1) at (1.75,1){$r_2$};
        \node[roundnode](B2) at (1.25,0){$v_5$};
        \node[roundnode](B3) at (2.25,0){$v_6$};
        \draw[->] (B2)--(B1);
        \draw[->] (B3)--(B1);
    \end{tikzpicture}
& 
\begin{tikzpicture}[roundnode/.style={circle, draw=black, inner sep=0pt,  minimum size=5mm}]
        \node[roundnode](A1) at (0,2){4};
        \node[roundnode](A2) at (0,1){3};
        \node[roundnode](A3) at (0,3){5};
        \node[roundnode](A4) at (-0.5,0){2};
        \node[roundnode](A5) at (0.5,0){1};
        \draw[->] (A5)--(A2);
        \draw[->] (A4)--(A2);
        \draw[->] (A1)--(A3);
        \draw[->] (A2)--(A1);

        \node[roundnode,fill=green!20](B1) at (1.75,1){8};
        \node[roundnode](B2) at (1.25,0){7};
        \node[roundnode](B3) at (2.25,0){6};
        \draw[->] (B2)--(B1);
        \draw[->] (B3)--(B1);
    \end{tikzpicture}
&
\begin{tikzpicture}[roundnode/.style={circle, draw=black, inner sep=0pt,  minimum size=5mm}]
         \node[roundnode](A1) at (4,0){2};
        \node[roundnode](A2) at (4,1){1};
        \node[roundnode](A3) at (5,0.5){3};
        \node[roundnode](A4) at (6,0.5){4};
        \node[roundnode](A5) at (7,0.5){5};
        \node[roundnode](A6) at (8,0.5){7};
        \node[roundnode,fill=green!20](A7) at (9,1){8};
        \node[roundnode](A8) at (9,0){6};
        \draw (A1)--(A2);
        \draw (A5)--(A6)--(A8);
        \draw (A1)--(A3);
        \draw (A2)--(A3);
        \draw (A3)--(A4);
        \draw (A5)--(A4);
        \draw (A6)--(A7);
        \draw (A8)--(A7);
    \end{tikzpicture}

\end{tabular}}
\end{center}
\caption{Continuing from Figure \ref{otp2fig}, $T_1$ is re-oriented so that $v_4$ is its new root.  The vertices of $T_1$ are then relabeled according to the proof of Theorem \ref{bigthm3} so that the new pinnacle set is $\{8\}$.}   
\label{otp3fig}
\end{figure}

\begin{remark} In the proof of Theorem \ref{bigthm3}, if  there is some vertex in $T_i$ that is adjacent to a vertex in $T_j$ with $1 \leq i < j \leq k$, then we can also conclude that $\{q_1, q_2, \ldots, q_{i-1}, q_{i+1}, \ldots q_k\}$.  The connectivity of $G$, however, does not guarantee this condition when $i \neq 1$. 
\end{remark}

    \section{The pinnacle posets of a graph}\label{sec:pinnacle poset}

Inspired by Theorem~\ref{bigthm2}, we  define a partial ordering (poset) on the pinnacle sets of a graph with the same size.   Let $\Pin(G,k)$ contain all the size-$k$ pinnacle sets of the graph $G$.  
For ease of discussion, we assume that the elements of a pinnacle set are ordered from smallest to largest. 
For any two sets $P, Q \in \Pin(G,k)$ with $P = \{p_1, p_2, \ldots, p_k\}$ and $Q = \{q_1, q_2, \ldots, q_k \}$, define the relation $\preceq$ as follows:  $P \preceq Q$ if and only if  $p_i \leq q_i$ for $i = 1, 2,\ldots, k$.  
It is easy to check that $\preceq$ is reflexive, anti-symmetric and transitive so $\mathcal{P}(G,k) = (\Pin(G,k), \preceq)$ is a partial order.  We note that Davis et al. (\cite{davis2018pinnacle}, Section 4) suggested a similar poset for the pinnacle sets of a permutation but did not explore it.

      Let $\mathcal{D} = (D, \leq)$ be a finite poset and  $d \in D$.  We say that $d$ is a {\it maximal element} of $\mathcal{D}$ if there is no $e \in D$ such that $d \leq e$ and $d \neq e$.  Additionally, if $d$ is the {\it only} maximal element of $\mathcal{D}$ or, equivalently,   for every $c \in D$, $c \leq d$,
 then $d$ is the {\it maximum element} of $\mathcal{D}$.   Similarly, $d$ is a {\it minimal element} of $\mathcal{D}$ if there is no $c \in D$ such that $c \leq d$ and $c\neq d$?  Futhermore, if $d$ is the {\it only} minimal element of $\mathcal{D}$ or, equivalently,  for every $e \in D$, $d \leq e$, then $d$ is the {\it minimum element} of $\mathcal{D}$.  Below, we show that when $\Pin(G,k) \neq \emptyset$, the poset $\Poset(G,k)$ has a maximum element. 
 
\begin{lemma}
Let $G$ be a graph with $n$ vertices.  Assume $\Pin(G,k) \neq \emptyset$.  Then $\Poset(G,k)$ has a maximum element: the set $M_{n,k} = \{n-k+1, n-k+2, \ldots, n\}$. 
\label{top1}
\end{lemma}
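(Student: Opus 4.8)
The plan is to verify two things: first, that $M_{n,k}$ is actually an element of $\Pin(G,k)$, and second, that it dominates every other element of $\Pin(G,k)$ under $\preceq$.

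First I would observe that since $\Pin(G,k)\neq\emptyset$, $G$ has a pinnacle set of size $k$, so statement (1) of Theorem~\ref{bigthm1} holds. By the equivalence (1)~$\Leftrightarrow$~(2) in that theorem, $M_{n,k}=\{n-k+1,n-k+2,\ldots,n\}$ is a pinnacle set of $G$, and it clearly has size $k$, so $M_{n,k}\in\Pin(G,k)$. (Alternatively, this membership also follows from the remark after Theorem~\ref{bigthm2}.)

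Next I would show $M_{n,k}$ is the maximum. Write $M_{n,k}=\{m_1,m_2,\ldots,m_k\}$ with $m_1<m_2<\cdots<m_k$, so that $m_i=n-k+i$. Let $Q=\{q_1,q_2,\ldots,q_k\}\in\Pin(G,k)$ be arbitrary with $q_1<q_2<\cdots<q_k$. Since $q_i,q_{i+1},\ldots,q_k$ are $k-i+1$ distinct integers all lying in $\{q_i,q_i+1,\ldots,n\}$, we must have $n-q_i\geq k-i$, i.e.\ $q_i\leq n-k+i=m_i$ for every $i=1,2,\ldots,k$. By the definition of $\preceq$, this says $Q\preceq M_{n,k}$. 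As $Q$ was arbitrary, every element of $\Pin(G,k)$ is below $M_{n,k}$, so $M_{n,k}$ is the (unique) maximum element of $\Poset(G,k)$.

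There is no real obstacle here; the only ingredient that needs invoking is the implication (1)~$\Rightarrow$~(2) of Theorem~\ref{bigthm1} to guarantee $M_{n,k}$ genuinely lies in $\Pin(G,k)$ rather than merely being an upper bound for it. The rest is the elementary fact that the $i$-th smallest of $k$ distinct elements of $[n]$ is at most $n-k+i$.
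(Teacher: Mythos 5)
Your proposal is correct and follows essentially the same route as the paper: membership of $M_{n,k}$ in $\Pin(G,k)$ via Theorem~\ref{bigthm1}, and dominance via the elementary observation that the $i$-th smallest of $k$ distinct elements of $[n]$ is at most $n-k+i$. The only difference is that you spell out the justification of that counting fact, which the paper states as immediate.
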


\begin{proof}
 When $\Pin(G,k) \neq \emptyset$, $G$ has a pinnacle set of size $k$. According to Theorem~\ref{bigthm1}, $M_{n,k}$ is one of them.  Furthermore, for any pinnacle set   $P = \{p_1, p_2, \ldots, p_k \} \in \Pin(G,k)$, $p_k \leq n, p_{k-1} \leq n-1, \ldots, p_1 \leq n-k+1$.  That is, $P \preceq M_{n,k}$.  Thus, $M_{n,k}$  is the maximum element of $\Poset(G,k)$. 
\end{proof}

      The poset $\mathcal{D} = (D, \leq)$ is a {\it join semilattice} if for any two elements $c, d \in D$, their {\it least upper bound} or {\it join} exists.    Similarly, if for $c, d \in D$,  their {\it greatest lower bound} or {\it meet}  exists, then $\mathcal{D}$ is a {\it meet semilattice}.  We denote the join and meet of $c$ and $d$ as $c \lor d$ and $c \land d$ respectively.

       When $\mathcal{D}$ is both a join and meet semilattice, then $\mathcal{D}$ is a {\it lattice}.  Finally,  $\mathcal{D}$ is a {\it distributive lattice} if for any three element $c, d, e \in D$, $c \land (d \lor e) = (c \land d) \lor (c \land e)$ or, equivalently, $c \lor (d \land e) = (c \lor d) \land (c \lor e)$.  For example,  for any set $A$, its power set $\mathcal{P}(A)$ forms a distributive lattice under the subset relation.  For $A_1, A_2 \in \mathcal{P}(A)$,  $A_1 \lor A_2 = A_1 \cup A_2$ while $A_1 \land A_2 = A_1 \cap A_2$.  The fact that the union and intersection operations distribute over each other is why $(\mathcal{P}(A), \subseteq)$ a distributive lattice and not just a lattice. We are now ready for our next result.

\begin{theorem}
\label{bigthm4}
Let $G$ be a graph with $n$ vertices. Assume $\Pin(G,k) \neq \emptyset$. Then $\Poset(G,k)$ is a join semilattice.  For any two pinnacle sets $P, Q \in \Pin(G,k)$ with $P = \{p_1, p_2, \ldots, p_k \}$ and $Q = \{q_1, q_2, \ldots, q_k \}$, their join is $P \lor Q = \{  \max\{p_i, q_i\}:  i = 1,2, \ldots, k \}.$
\end{theorem}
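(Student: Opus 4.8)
The plan is to show directly that the set $R = \{\max\{p_i,q_i\} : i = 1,2,\ldots,k\}$ is the least upper bound of $P$ and $Q$ in $\Poset(G,k)$; once this is established for every pair $P,Q \in \Pin(G,k)$, it follows by the definition of a join semilattice that $\Poset(G,k)$ is one.

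First I would verify that $R$ is a legitimate element of $\Pin(G,k)$. Since $p_1 < p_2 < \cdots < p_k$ and $q_1 < q_2 < \cdots < q_k$, for each $i$ we have $\max\{p_i,q_i\} < \max\{p_{i+1},q_{i+1}\}$: the left-hand side equals $p_i$ or $q_i$, and in either case it is strictly smaller than the corresponding term $p_{i+1}$ or $q_{i+1}$, which in turn is at most $\max\{p_{i+1},q_{i+1}\}$. Hence $R$ consists of exactly $k$ distinct integers, and when listed in increasing order its $i$th element is $\max\{p_i,q_i\}$. Next, because $P$ is a pinnacle set of $G$ and $p_i \leq \max\{p_i,q_i\}$ for all $i$, Theorem~\ref{bigthm2} (applied to $P$ and the set $R$) guarantees that $R$ is itself a pinnacle set of $G$; since $|R| = k$, we conclude $R \in \Pin(G,k)$.

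It then remains to check the two defining properties of a join. For the upper-bound property, $p_i \leq \max\{p_i,q_i\}$ and $q_i \leq \max\{p_i,q_i\}$ for every $i$, so $P \preceq R$ and $Q \preceq R$. For minimality, suppose $U = \{u_1,u_2,\ldots,u_k\} \in \Pin(G,k)$ with $u_1 < u_2 < \cdots < u_k$ satisfies $P \preceq U$ and $Q \preceq U$; then $p_i \leq u_i$ and $q_i \leq u_i$ for all $i$, hence $\max\{p_i,q_i\} \leq u_i$ for all $i$, i.e., $R \preceq U$. Thus $R$ lies below every common upper bound of $P$ and $Q$, so $R = P \lor Q$, and $\Poset(G,k)$ is a join semilattice.

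There is no serious obstacle in this argument: at the level of integer $k$-tuples the componentwise maximum is obviously the least upper bound, so the only step that genuinely uses the structure of the problem is \emph{closure} --- the fact that $R$ is again a pinnacle set of $G$ --- which is precisely the content of Theorem~\ref{bigthm2}. I would also remark that this reasoning does not extend to meets: the componentwise minimum of two pinnacle sets of $G$ need not be a pinnacle set of $G$, which is exactly why the subsequent results must assume $\Poset(G,k)$ has a minimum element before concluding it is a distributive lattice.
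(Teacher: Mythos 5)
Your proof is correct and follows essentially the same route as the paper: verify that the componentwise maximum is strictly increasing (hence a size-$k$ subset of $[n]$), invoke Theorem~\ref{bigthm2} for closure, and then check the upper-bound and minimality properties directly. Your closing remark about why the argument does not dualize to meets also matches the paper's subsequent discussion.
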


\begin{proof}
Let $P, Q \in \Pin(G,k)$ with $P = \{p_1, p_2, \ldots, p_k \}$ and $Q = \{q_1, q_2, \ldots, q_k \}$.  Define $\max(P, Q)$ as the set $\{  \max\{p_i, q_i\},  i = 1,2, \ldots, k \}$.  Let us argue that $\max(P,Q)$ is also a size-$k$ pinnacle set of~$G$.  
For $i = 1,2, \ldots, k-1$,  $p_i < p_{i+1}$ and $q_i < q_{i+1}$ by definition.  If $\max\{p_i, q_i\} = p_i$  then $\max\{p_i, q_i\} < p_{i+1} \leq \max\{p_{i+1}, q_{i+1}\}$.  Similarly, if $\max\{p_i, q_i\} = q_i$ then $\max\{p_i, q_i\} < q_{i+1} \leq \max\{p_{i+1}, q_{i+1}\}$.  Thus, $\max\{p_i, q_i\} < \max\{p_{i+1}, q_{i+1}\}$ which implies that the sequence $(\max\{p_1, q_1\}, \max\{p_2, q_2\}, \ldots,  \max\{p_k, q_k\})$ is strictly increasing. That is,  no two elements of $\max(P, Q)$ are equal so  $\max(P, Q)$ has size $k$.  Furthermore, {\it every} element of $\max(P, Q)$ is from $P \cup Q$ so $\max(P, Q)$ is a size-$k$ subset of $[n]$.  Now, 
for $i = 1, 2, \ldots, k$,  $p_i \leq \max\{p_i, q_i\}$.  From Theorem~\ref{bigthm2},  since $P$ is a size-$k$ pinnacle set of $G$,  so is $\max(P, Q)$.  

We have established that $\max(P, Q) \in \Pin(G,k)$.  Clearly, $P \preceq \max(P,Q)$ and $Q \preceq \max(P,Q)$ so $\max(P,Q)$ is an upper bound of $P$ and $Q$.  Now, suppose $S = \{s_1, s_2, \ldots, s_k\} \in \Pin(G,k)$ is another upper bound of $P$ and $Q$. 
 It follows that for  $i = 1,2, \ldots, k$, $\max\{p_i, q_i\} \leq s_i$.  Consequently, $\max(P,Q) \preceq S$   and  $P \lor Q = \max(P,Q)$.
 That is, $\Poset(G,k)$ is a join semilattice.  
\end{proof}

    Let us refer to the minimal elements of $\Poset(G,k)$ as the {\it bottom size-$k$ pinnacle sets of $G$}.   A natural question to ask is: when  $\Pin(G,k) \neq \emptyset$, is $\Poset(G,k)$ a meet semilattice too?  Figure~\ref{fig:two bottom pinnacles} implies the answer is no. It shows a graph that has two bottom size-3 pinnacle sets.  
    Nonetheless, there are exceptions. Figure~\ref{fig:poset for P9} displays the posets of size-$k$ pinnacles sets of the path $P_9$ for $k = 2$, $3$ and $4$.  All of them have minimum elements.  Corollaries \ref{cor:gaps for cycle} and \ref{cor:gaps for paths} imply that the posets of the size-$k$ pinnacle sets of  cycles and paths have minimum elements provided $\Pin(C_n,k)$ and $\Pin(P_n,k)$ are non-empty.

\begin{proposition}\label{prop:bottom pinnacle set of cycle}
Let $n \ge 3$ and $k \ge 2$ be positive integers such that $n \ge 2k$.  Then $\{3, 5, 7, \hdots, 2k-1, n\}$ is the minimum pinnacle set of $\mathcal{P}(C_n,k)$. 
\end{proposition}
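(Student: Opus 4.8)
The plan is to read this off almost immediately from Corollary~\ref{cor:gaps for cycle}, which characterizes the size-$k$ pinnacle sets of $C_n$ as exactly the sets $\{s_1 < s_2 < \cdots < s_k\} \subseteq [n]$ satisfying $s_i \ge 2i+1$ for all $1 \le i \le k-1$ and $s_k = n$. So the proof splits into two checks: that $P = \{3,5,7,\ldots,2k-1,n\}$ lies in $\mathcal{P}(C_n,k)$, and that $P \preceq S$ for every $S \in \mathcal{P}(C_n,k)$.

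First I would show $P \in \mathcal{P}(C_n,k)$. Writing $p_i = 2i+1$ for $1 \le i \le k-1$ and $p_k = n$, the hypothesis $n \ge 2k$ gives $p_{k-1} = 2k-1 < 2k \le n = p_k$, so the displayed elements are strictly increasing and $|P| = k$; the same hypothesis gives $k \le \lfloor n/2 \rfloor$, so $\mathcal{P}(C_n,k)$ is nonempty (and $P$ is an admissible candidate). Since $p_i = 2i+1 \ge 2i+1$ for $1 \le i \le k-1$ and $p_k = n$, Corollary~\ref{cor:gaps for cycle} certifies that $P$ is a pinnacle set of $C_n$ of size $k$.

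Next I would establish minimality. Take an arbitrary $S = \{s_1 < s_2 < \cdots < s_k\} \in \mathcal{P}(C_n,k)$. By Corollary~\ref{cor:gaps for cycle}, $s_i \ge 2i+1 = p_i$ for $1 \le i \le k-1$, and $s_k = n = p_k$, so $p_i \le s_i$ for every $i$, i.e.\ $P \preceq S$. Combined with $P \in \mathcal{P}(C_n,k)$ from the previous paragraph, this shows $P$ is below every element of the poset, hence $P$ is its (unique) minimum element, proving the proposition.

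There is no real obstacle here: the content is entirely contained in Corollary~\ref{cor:gaps for cycle}. The only point deserving a sentence of care is why the hypothesis is $n \ge 2k$ rather than the weaker $n \ge 2k-1$ that at first glance seems to suffice for Corollary~\ref{cor:gaps for cycle}: the value $n = 2k-1$ would force $p_{k-1} = 2k-1 = n = p_k$, collapsing $P$ to a $(k-1)$-element set, and would also make $\mathcal{P}(C_n,k)$ empty; requiring $n \ge 2k$ avoids both issues.
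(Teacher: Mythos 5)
Your proof is correct and matches the paper's intent exactly: the paper gives no explicit proof of this proposition, stating only in the preceding sentence that Corollary~\ref{cor:gaps for cycle} implies the existence of the minimum element, and your argument is precisely the routine verification of that claim. The observation about why $n\ge 2k$ (equivalently $k\le\lfloor n/2\rfloor$) is the right hypothesis is a nice touch but not a departure from the paper's route.
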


\begin{proposition}\label{prop:bottom pinnacle set of path}
Let $n \ge 3$ and $k \ge 2$ be positive integers such that $n \ge 2k -1$.  Then  $\{2, 4, \hdots, 2k-2, n\}$ is the minimum pinnacle set of $\mathcal{P}(P_n,k)$. 
\end{proposition}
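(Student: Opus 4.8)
The plan is to reduce everything to the gap characterization of pinnacle sets of $P_n$, namely Corollary~\ref{cor:gaps for paths}, combined with the coordinatewise definition of $\preceq$ and of a minimum element of a poset. Write $B := \{2, 4, \ldots, 2k-2, n\}$, listed in increasing order as $B = \{b_1, b_2, \ldots, b_k\}$ with $b_i = 2i$ for $1 \le i \le k-1$ and $b_k = n$.

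First I would check that $B$ is genuinely an element of $\Pin(P_n,k)$, i.e.\ that it is an admissible input to Corollary~\ref{cor:gaps for paths}. The entries are strictly increasing because $b_{k-1} = 2k-2 < n = b_k$, which follows from the hypothesis $n \ge 2k-1$; the size bound $k \le \lceil n/2\rceil$ also follows from $n \ge 2k-1$; and the required inequalities hold since $b_i = 2i \ge 2i$ for $1 \le i \le k-1$ (with equality) and $b_k = n$. Hence by Corollary~\ref{cor:gaps for paths}, $B \in \Pin(P_n,k)$, so $\mathcal{P}(P_n,k)$ is a nonempty poset.

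Next I would show that $B \preceq Q$ for every $Q = \{q_1, q_2, \ldots, q_k\} \in \Pin(P_n,k)$, with the $q_i$ in increasing order. Corollary~\ref{cor:gaps for paths} gives $q_i \ge 2i = b_i$ for all $1 \le i \le k-1$ and $q_k = n = b_k$. Therefore $b_i \le q_i$ for every $i \in [k]$, which is precisely $B \preceq Q$. Since $B$ is itself in $\Pin(P_n,k)$ and lies below every element of the poset, it is the (unique) minimum element of $\mathcal{P}(P_n,k)$, as claimed. The companion Proposition~\ref{prop:bottom pinnacle set of cycle} is proved in exactly the same way, using Corollary~\ref{cor:gaps for cycle} (with lower bound $s_i \ge 2i+1$) and the candidate $\{3,5,\ldots,2k-1,n\}$, whose strict increase needs $2k-1 < n$, i.e.\ $n \ge 2k$.

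There is no real obstacle here: the content of the statement is essentially a direct reading of Corollary~\ref{cor:gaps for paths}. The only points that need care are (i) confirming that $B$ satisfies the strict-increase and size constraints, so that it qualifies as an element of $\Pin(P_n,k)$ and not merely as a tuple meeting the gap inequalities, and (ii) invoking the correct direction of Corollary~\ref{cor:gaps for paths}, the one that bounds the coordinates of an arbitrary $Q$ from below. Once these are in place, minimality is immediate from the definition of $\preceq$.
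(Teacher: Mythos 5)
Your proof is correct and follows exactly the route the paper intends: the paper states this proposition without a separate proof, remarking only that it follows from Corollary~\ref{cor:gaps for paths}, and your argument fills in precisely those details (membership of $\{2,4,\ldots,2k-2,n\}$ in $\Pin(P_n,k)$ under the hypothesis $n\ge 2k-1$, and the coordinatewise lower bound $q_i\ge 2i$, $q_k=n$ forcing $B\preceq Q$). No gaps.
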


Indeed, when $\Poset(G,k)$ has a minimum element, we can say something even stronger about the poset's structure.

\begin{corollary}
\label{cor4}
Let $G$ be a graph with $n$ vertices. Assume $\Pin(G,k) \neq \emptyset$.  If $\Poset(G,k)$ has a minimum element,  then $\Poset(G,k)$ is a distributive lattice.  For any two pinnacle sets $P, Q \in \Pin(G,k)$ with $P = \{p_1, p_2, \ldots, p_k \}$ and $Q = \{q_1, q_2, \ldots, q_k \}$, their meet is $P \land Q = \{  \min\{p_i, q_i\}:  i = 1,2, \ldots, k \}.$
\end{corollary}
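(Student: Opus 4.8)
The plan is to first pin down the meet operation explicitly, then conclude that $\Poset(G,k)$ is a lattice, and finally verify distributivity by reducing everything to the coordinatewise operations on integers. Throughout, let $R = \{r_1, r_2, \ldots, r_k\}$ denote the minimum element of $\Poset(G,k)$, whose existence is the standing hypothesis.

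Given $P = \{p_1, \ldots, p_k\}$ and $Q = \{q_1, \ldots, q_k\}$ in $\Pin(G,k)$, set $\min(P,Q) = \{\min\{p_i,q_i\} : i = 1, 2, \ldots, k\}$. First I would check, exactly as in the proof of Theorem~\ref{bigthm4} but with $\max$ replaced by $\min$, that the sequence $(\min\{p_1,q_1\}, \ldots, \min\{p_k,q_k\})$ is strictly increasing, so that $\min(P,Q)$ is a size-$k$ subset of $[n]$ whose largest element is $\min\{p_k,q_k\} = n$ (using $p_k = q_k = n$ from Proposition~\ref{observe1}). The crucial step is to show $\min(P,Q) \in \Pin(G,k)$: since $R \preceq P$ and $R \preceq Q$, we have $r_i \leq \min\{p_i,q_i\}$ for every $i$, i.e.\ $R \preceq \min(P,Q)$; because $R$ is a pinnacle set of $G$, Theorem~\ref{bigthm2} immediately yields that $\min(P,Q)$ is a pinnacle set of $G$ as well. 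This is the \emph{only} place the hypothesis that $\Poset(G,k)$ has a minimum element is used, and it is precisely what can fail in its absence (cf.\ Figure~\ref{fig:two bottom pinnacles}), so this step is the main obstacle.

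Next I would verify that $\min(P,Q)$ is the greatest lower bound of $P$ and $Q$. It is clearly a lower bound, and if $S = \{s_1, \ldots, s_k\} \in \Pin(G,k)$ is any lower bound of $P$ and $Q$, then $s_i \leq p_i$ and $s_i \leq q_i$ force $s_i \leq \min\{p_i,q_i\}$ for all $i$, so $S \preceq \min(P,Q)$. Hence $P \land Q = \min(P,Q)$, and combined with Theorem~\ref{bigthm4} this shows $\Poset(G,k)$ is a lattice.

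Finally, for distributivity, I would observe that both operations are computed coordinate by coordinate: $(P \lor Q)_i = \max\{p_i,q_i\}$ and $(P \land Q)_i = \min\{p_i,q_i\}$. Thus the identity $c \land (d \lor e) = (c \land d) \lor (c \land e)$ reduces, in each coordinate $i$, to $\min\{a, \max\{b,c\}\} = \max\{\min\{a,b\}, \min\{a,c\}\}$ for integers $a,b,c$, which holds because $(\Z, \leq)$ is a chain and hence a distributive lattice. Equivalently, the map sending a size-$k$ pinnacle set to the vector of its entries embeds $\Poset(G,k)$ as a sublattice of $\Z^k$ under the coordinatewise order, and every sublattice of a distributive lattice is distributive. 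The only bookkeeping to be careful about is in the first step, namely confirming that $\min(P,Q)$ genuinely lands in $\Pin(G,k)$ and not merely among the size-$k$ subsets of $[n]$; the appeal to Theorem~\ref{bigthm2} via the minimum element $R$ handles this cleanly.
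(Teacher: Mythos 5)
Your proposal is correct and follows essentially the same route as the paper: define the coordinatewise minimum, use the minimum element $R$ together with Theorem~\ref{bigthm2} to certify that it lies in $\Pin(G,k)$, verify it is the greatest lower bound, and reduce distributivity to the coordinatewise $\min$/$\max$ identity for integers. Your closing remark that $\Poset(G,k)$ embeds as a sublattice of $\Z^k$ is a slightly slicker way to phrase the last step, but it is the same argument.
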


\begin{proof}
Again, let $P = \{p_1, p_2, \ldots, p_k \}$ and $Q = \{q_1, q_2, \ldots, q_k \}$ be size-$k$ pinnacle sets of $G$.  Let $\min(P,Q) = \{ \min\{p_i, q_i\},  i = 1,2, \ldots, k \}$. Similar to the proof of Theorem \ref{bigthm4}, it is straightforward to argue that $\min(P,Q)$ is a size-$k$ subset of $[n]$.  Assume $\Poset(G,k)$ has a minimum element, say $B = \{b_1, b_2, \ldots, b_k \}$.   By definition, $B \preceq P$ and $B \preceq Q$ so $b_i \leq \min\{p_i, q_i\}$ for $i =1, 2,\ldots, k$.  Since $B$ is a size-$k$ pinnacle set of $G$, by Theorem~\ref{bigthm2}, $\min(P,Q)$ is also a size-$k$ pinnacle set of $G$.  

By definition, $\min(P,Q) \preceq P$ and $\min(P,Q) \preceq Q$ so $\min(P,Q)$ is a lower bound of $P$ and $Q$.  If some $S = \{s_1, s_2, \ldots, s_k\} \in \Pin(G,k)$ has $S \preceq P$ and $S \preceq Q$, then $s_i \leq \min\{p_i, q_i\}$ for $i = 1, 2,\ldots, k$ so $S \preceq \min(P,Q)$.  Thus, $P \land Q = \min(P,Q)$ and $\Poset(G,k)$ is a meet semilattice.  Together with Theorem~\ref{bigthm4}, we conclude that $\Poset(G,k)$ is a lattice.

Finally, to prove that  $\Poset(G,k)$ is actually a distributive lattice, we need to show that for any three pinnacle sets $P, Q, S$ in $\Poset(G,k)$, $P \land (Q \lor S) = (P \land Q) \lor (P \land S)$.  It is equivalent to showing that $ \min(P, \max (Q,S)) = \max( \min(P, Q), \min(P, S))$, which in turn is equivalent to showing that for $i = 1, 2,\ldots, k$,  $\min \{p_i, \max \{q_i, s_i\} \} = \max \{ \min \{ p_i, q_i\}, \min \{p_i, s_i \} \}$.  Since $p_i, q_i$ and $s_i$ are integers and the $\max$ operation distributes over the $\min$ operation (and vice versa) for the set of real numbers,
the last equality has to be true. Therefore, $\Poset(G,k)$ is a distributive lattice.  \end{proof}

\begin{figure}
    \centering
    \begin{tikzpicture}[roundnode/.style={circle, draw=black, inner sep=0pt,  minimum size=2mm}]
    \draw[line width=0.1mm];
    \node[roundnode, fill=black] at (0,0) {};
    \node[roundnode, fill=black] at (1,0) {};
    \node[roundnode, fill=black] at (2,0) {};
    \node[roundnode, fill=black] at (4,0) {};
    \node[roundnode, fill=black] at (3,1) {};
    \node[roundnode, fill=black] at (3,-1) {};
    \draw (0,0)--(4,0)--(3,-1);
    \draw (3,1)--(4,0);
    \draw (2,0)--(3,1);
    \draw (3,-1)--(2,0);
\end{tikzpicture}
\hspace{0.5in}
\begin{tikzpicture}
\node(a0) at (0,1){$\{4,5,6\}$};
\node(a1) at (0,0){$\{3,5,6\}$};
\node(m1) at (-1,-1){$\{2,5,6\}$};
\node(m2) at (1,-1){$\{3,4,6\}$};
\draw(a0)--(a1)--(m1);
\draw(a1)--(m2);
\end{tikzpicture}
    \caption{An example of a graph with two bottom size-3 pinnacle sets.}
    \label{fig:two bottom pinnacles}
\end{figure}
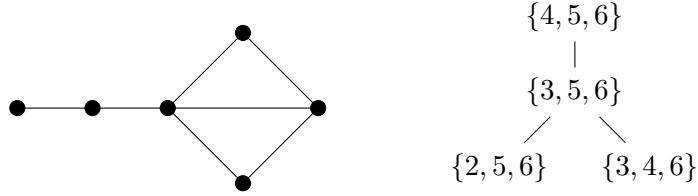

We end this subsection by strengthening Lemma~\ref{otp1} for bottom pinnacle sets. 

 \begin {lemma}
 \label{otp3}
Let $B = \{b_1, b_2, \ldots, b_k\}$ be a bottom size-$k$ pinnacle set of $G$.  Then $G$ has an ordered tree partition $\mathcal{T}_B = (T_1, T_2, \ldots, T_k)$ so that $b_i =  \sum_{j=1}^i |V(T_i)|$ for $i = 1, 2,\ldots, k$. 
\end{lemma}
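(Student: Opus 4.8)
The plan is to bootstrap from Lemmas~\ref{otp1} and~\ref{otp2} and then use the minimality of $B$ in the poset $\Poset(G,k)$. First I would invoke Lemma~\ref{otp1}: since $B$ is in particular a pinnacle set of $G$, there is an ordered tree partition $\mathcal{T} = (T_1, T_2, \ldots, T_k)$ of $G$ with $p_i := \sum_{j=1}^i |V(T_j)| \le b_i$ for all $1 \le i \le k$. Each tree $T_j$ is nonempty since it contains its root $r_j$, so $p_1 < p_2 < \cdots < p_k$; in particular $P := \{p_1, p_2, \ldots, p_k\}$ is a set of size exactly $k$.

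Next I would apply Lemma~\ref{otp2} to this ordered tree partition $\mathcal{T}$: it produces a labeling $\lambda$ of $G$ whose pinnacle set is precisely $P$. Hence $P \in \Pin(G,k)$, and by construction $p_i \le b_i$ for every $i$, so $P \preceq B$ in the poset $\Poset(G,k)$.

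Finally, since $B$ is by hypothesis a bottom (i.e., minimal) element of $\Poset(G,k)$, the relation $P \preceq B$ with $P \in \Pin(G,k)$ forces $P = B$; that is, $b_i = p_i = \sum_{j=1}^i |V(T_j)|$ for every $1 \le i \le k$. Taking $\mathcal{T}_B := \mathcal{T}$ then gives the desired ordered tree partition, and the lemma is proved.

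I expect no genuine obstacle here: the argument is a direct composition of the two preceding lemmas with the definition of a minimal element. The one point that needs care is verifying that the $p_i$ are \emph{strictly} increasing, so that $P$ really is an element of $\Pin(G,k)$ (of size $k$) and the minimality of $B$ can be applied; this is immediate because every tree in an ordered tree partition contains at least its root, so $|V(T_i)| \ge 1$ for all $i$.
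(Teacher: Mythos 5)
Your proposal is correct and follows exactly the paper's own argument: apply Lemma~\ref{otp1} to get the ordered tree partition with partial sums $p_i \le b_i$, apply Lemma~\ref{otp2} to realize $\{p_1,\ldots,p_k\}$ as a pinnacle set, and conclude $P = B$ by minimality. Your added remark that the $p_i$ are strictly increasing (so that $P$ genuinely lies in $\Pin(G,k)$) is a small point the paper leaves implicit, but otherwise the two proofs coincide.
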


\begin{proof}
Since $B$ is a pinnacle set of $G$, by Lemma~\ref{otp1},  $G$ has an ordered tree partition $(T_1, T_2, \ldots, T_k)$  so that $b_i \ge  \sum_{j=1}^i |V(T_i)|$ for $i = 1,2, \ldots, k$.  By Lemma~\ref{otp2}, $B' = \{b'_1, b'_2, \ldots, b'_k\}$, with $b_i' = \sum_{j=1}^i |V(T_i)|$ for $i = 1,2, \ldots, k$, is a pinnacle set of $G$.  Hence, $B' \preceq B$.  However, $B$ is a bottom pinnacle set of $G$ so $B' = B$. 
\end{proof}

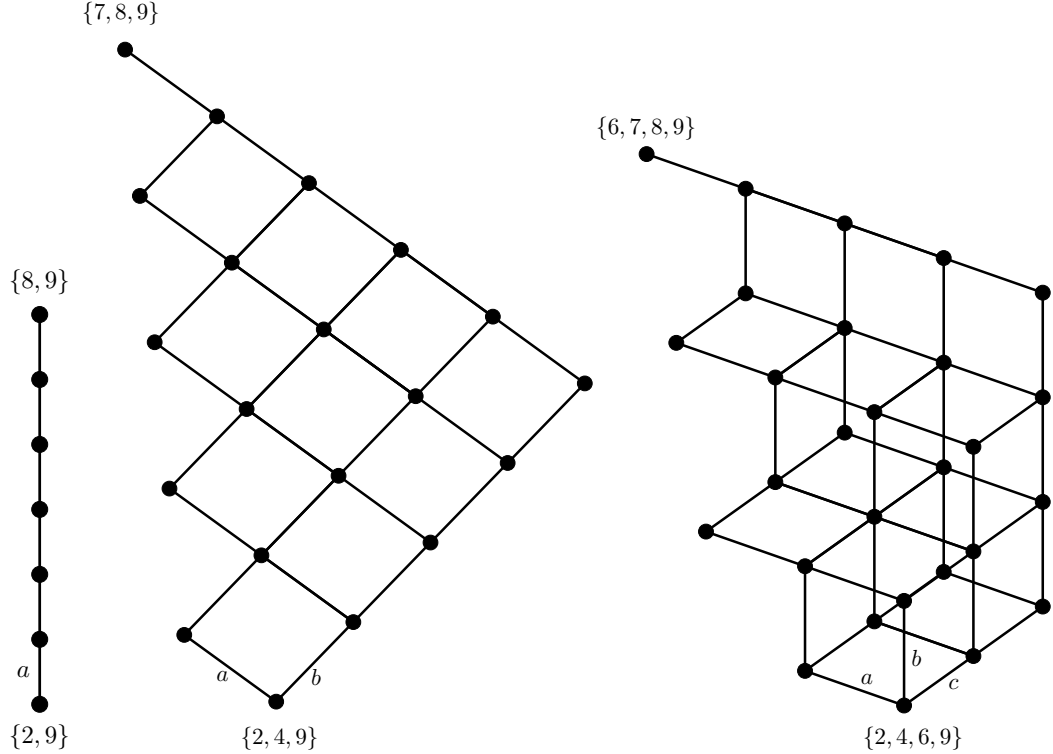
\begin{figure}
    \centering
    
    \begin{tabular}{cll}
    \usetikzlibrary {3d}
\tdplotsetmaincoords{60}{90}
\resizebox{!}{2.6in}{
\begin{tikzpicture}
		[tdplot_main_coords,
			cube/.style={very thick,black},
			grid/.style={very thin,gray},
			axis/.style={->,blue,thick}]
	 \draw[cube] (0,0,0) -- (12,0,0);
\node at(11,-.25,0){$a$};
\node at(13,0,0){$\{2,9\}$};
\node at(-1,0,0){$\{8,9\}$};
\draw[fill=black] (0,0,0) circle (3.5pt);
\draw[fill=black] (2,0,0) circle (3.5pt);
\draw[fill=black] (4,0,0) circle (3.5pt);
\draw[fill=black] (6,0,0) circle (3.5pt);
\draw[fill=black] (8,0,0) circle (3.5pt);
\draw[fill=black] (10,0,0) circle (3.5pt);
\draw[fill=black] (12,0,0) circle (3.5pt);
\end{tikzpicture}
}

&
\hspace{-3em}
\resizebox{!}{4in}{
\usetikzlibrary {3d}
\tdplotsetmaincoords{30}{130}
\begin{tikzpicture}
		[tdplot_main_coords,
			cube/.style={very thick,black},
			grid/.style={very thin,gray},
			axis/.style={->,blue,thick}]

	 \draw[cube] (0,0,0) -- (0,2,0) -- (2,2,0) -- (2,0,0) -- cycle;

\draw[cube] (2,0,0) -- (2,2,0) -- (4,2,0) -- (4,0,0) -- cycle;
\draw[cube] (0,-2,0) -- (0,0,0) -- (2,0,0) -- (2,-2,0) -- cycle;
\draw[cube] (-4,-4,0) -- (-4,-2,0) -- (-2,-2,0) -- (-2,-4,0) -- cycle;
\draw[cube] (-2,-4,0) -- (-2,-2,0) -- (0,-2,0) -- (0,-4,0) -- cycle;
\draw[cube] (-2,-2,0) -- (-2,0,0) -- (0,0,0) -- (0,-2,0) -- cycle;
\draw[cube] (-4,-2,0) -- (-4,0,0) -- (-2,0,0) -- (-2,-2,0) -- cycle;
\draw[cube] (-4,-6,0) -- (-4,-4,0) -- (-2,-4,0) -- (-2,-6,0) -- cycle;
	 \draw[cube](-4,-6,0)--(-4,-8,0);
  \draw[cube](-4,2,0)--(-0,2,0);
  \draw[cube](-4,2,0)--(-4,-2,0);
  \draw[cube](-2,2,0)--(-2,-2,0);
  
\node at(4.2,1,0){$a$};
\node at(3.2,2.2,0){$b$};
\node at(4.5,2.5,0){$\{2,4,9\}$};
\node at(-4.5,-8.5,0){$\{7,8,9\}$};
\draw[fill=black] (0,0,0) circle (3.5pt);
\draw[fill=black] (2,0,0) circle (3.5pt);
\draw[fill=black] (4,0,0) circle (3.5pt);
\draw[fill=black] (-2,0,0) circle (3.5pt);
\draw[fill=black] (-4,0,0) circle (3.5pt);
\draw[fill=black] (0,2,0) circle (3.5pt);
\draw[fill=black] (2,2,0) circle (3.5pt);
\draw[fill=black] (4,2,0) circle (3.5pt);
\draw[fill=black] (-2,2,0) circle (3.5pt);
\draw[fill=black] (-4,2,0) circle (3.5pt);

\draw[fill=black] (0,-2,0) circle (3.5pt);
\draw[fill=black] (2,-2,0) circle (3.5pt);
\draw[fill=black] (-2,-2,0) circle (3.5pt);
\draw[fill=black] (-4,-2,0) circle (3.5pt);
\draw[fill=black] (-4,-4,0) circle (3.5pt);
\draw[fill=black] (-2,-4,0) circle (3.5pt);
\draw[fill=black] (0,-4,0) circle (3.5pt);
\draw[fill=black] (-4,-6,0) circle (3.5pt);
\draw[fill=black] (-2,-6,0) circle (3.5pt);
\draw[fill=black] (-4,-8,0) circle (3.5pt);

\end{tikzpicture}

}
&
\hspace{-3em}
\resizebox{!}{3.4in}{
\usetikzlibrary {3d}

\tdplotsetmaincoords{60}{125}
\begin{tikzpicture}
		[tdplot_main_coords,
			cube/.style={very thick,black},
			grid/.style={very thin,gray},
			axis/.style={->,blue,thick}]



	 \draw[cube] (0,0,0) -- (0,2,0) -- (2,2,0) -- (2,0,0) -- cycle;
	
	\draw[cube] (0,0,0) -- (0,0,2);
	\draw[cube] (0,2,0) -- (0,2,2);
	\draw[cube] (2,0,0) -- (2,0,2);
	\draw[cube] (2,2,0) -- (2,2,2);

	\draw[cube] (0,0,2) -- (0,2,2) -- (2,2,2) -- (2,0,2) -- cycle;
	\draw[cube] (0,0,4) -- (0,2,4) -- (2,2,4) -- (2,0,4) -- cycle;
	
	\draw[cube] (0,0,2) -- (0,0,4);
	\draw[cube] (0,2,2) -- (0,2,4);
	\draw[cube] (2,0,2) -- (2,0,4);
	\draw[cube] (2,2,2) -- (2,2,4);

	 \draw[cube] (2,0,0) -- (2,2,0) -- (4,2,0) -- (4,0,0) -- cycle;

   \draw[cube] (2,-2,2) -- (2,0,2) -- (4,0,2) -- (4,-2,2) -- cycle;

 \draw[cube] (0,-4,4) -- (0,-2,4) -- (2,-2,4) -- (2,-4,4) -- cycle;

	  \draw[cube] (2,0,2) -- (2,2,2) -- (4,2,2) -- (4,0,2) -- cycle;
	
	
	 \draw[cube] (4,0,0) -- (4,0,2);
	 \draw[cube] (4,2,0) -- (4,2,2);

	 \draw[cube] (0,-2,2) -- (0,0,2) -- (2,0,2) -- (2,-2,2) -- cycle;
	\draw[cube] (0,-2,4) -- (0,0,4) -- (2,0,4) -- (2,-2,4) -- cycle;
	
	\draw[cube] (0,-2,2) -- (0,-2,4);
	
	  \draw[cube] (2,-2,2) -- (2,-2,4);
	  \draw[cube] (0,-2,4) -- (0,-2,6);
   \draw[cube] (0,0,4) -- (0,0,6);
   \draw[cube] (0,-4,4) -- (0,-4,6);
   \draw[cube](0,0,6)--(0,-4,6);
   \draw[cube](0,-6,6)--(0,2,6);
   \draw[cube](0,2,4)--(0,2,6);

\draw[fill=black] (0,0,0) circle (3.5pt);
\draw[fill=black] (2,0,0) circle (3.5pt);
\draw[fill=black] (4,0,0) circle (3.5pt);

\draw[fill=black] (0,2,0) circle (3.5pt);
\draw[fill=black] (2,2,0) circle (3.5pt);
\draw[fill=black] (4,2,0) circle (3.5pt);

\draw[fill=black] (0,2,2) circle (3.5pt);
\draw[fill=black] (2,2,2) circle (3.5pt);
\draw[fill=black] (4,2,2) circle (3.5pt);

\draw[fill=black] (0,0,2) circle (3.5pt);
\draw[fill=black] (2,0,2) circle (3.5pt);
\draw[fill=black] (4,0,2) circle (3.5pt);

\draw[fill=black] (0,-2,2) circle (3.5pt);
\draw[fill=black] (2,-2,2) circle (3.5pt);
\draw[fill=black] (4,-2,2) circle (3.5pt);

\draw[fill=black] (0,-4,4) circle (3.5pt);
\draw[fill=black] (2,-4,4) circle (3.5pt);
\draw[fill=black] (0,-2,4) circle (3.5pt);
\draw[fill=black] (2,-2,4) circle (3.5pt);
\draw[fill=black] (0,0,4) circle (3.5pt);
\draw[fill=black] (2,0,4) circle (3.5pt);
\draw[fill=black] (0,2,4) circle (3.5pt);
\draw[fill=black] (2,2,4) circle (3.5pt);

\draw[fill=black] (0,0,6) circle (3.5pt);
\draw[fill=black] (0,-2,6) circle (3.5pt);
\draw[fill=black] (0,-4,6) circle (3.5pt);

\draw[fill=black] (0,-6,6) circle (3.5pt);
\draw[fill=black] (0,2,6) circle (3.5pt);

\node at (4.8,2.75,0){$\{2,4,6,9\}$};
\node at (0,-6,6.5){$\{6,7,8,9\}$};

\node at(4,1.25,.25){$a$};
\node at(4,2.25,1){$b$};
\node at(4,3,.75){$c$};
\end{tikzpicture}

}
\end{tabular}
    \caption{The posets $\Poset(P_9,2)$, $\Poset(P_9,3)$, and $\Poset(P_9,4)$. All three are distributive lattices because they have a minimum element. In these figures, edges in the same direction as those labeled $a$ increase by one the first element in the set, edges going in the same direction as those labeled $b$ increase by one the second element of the set, and  edges in the same direction as those labeled $c$ increase by one the third element of the set.}
    \label{fig:poset for P9}
\end{figure}

 \subsection{Minimum pinnacle sets}

The structure of $\Poset(G,k)$ implies that the bottom size-$k$ pinnacle sets of $G$ play an important role.  Once determined, we know {\it all} of the size-$k$ pinnacle sets of $G$. That is, every size-$k$ subset $P$ of $[n]$ is a pinnacle set of $G$ if and only if there is some bottom size-$k$ pinnacle set $B$ of $G$ such that $B \preceq P \preceq M_{n,k}$.  The characterization is even more succinct when $G$ has a {\it unique} bottom size-$k$ pinnacle set, i.e., $\Poset(G,k)$ has a minimum element.  Below, we present two such cases.  For the first case, $G$ has $k$ connected components.  By Corollary~\ref{bigthmcor2}, the smallest-sized pinnacle set of $G$ is also $k$.

\begin{theorem}\label{thm:min element}
Suppose $G$ has $n$ vertices and $k$ connected components.  Let the sizes of these components be $h_1, h_2, \ldots, h_k$ respectively with $h_1 \leq h_2 \leq \cdots \leq h_k$.  Then $P = \{p_1, p_2, \ldots, p_k\}$ where $p_i = \sum_{j=1}^i h_j$ for $i = 1,2, \ldots, k$ is the minimum element of $\Poset(G, k)$.  
\label{bottomthm1}
\end{theorem}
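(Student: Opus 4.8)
The plan is to verify directly that $P$ satisfies the two defining properties of a minimum element of $\Poset(G,k)$: that $P\in\Pin(G,k)$, and that $P\preceq Q$ for every $Q\in\Pin(G,k)$. Note first that $\Pin(G,k)\neq\emptyset$: by Corollary~\ref{bigthmcor2} the smallest pinnacle set of $G$ has exactly $c=k$ elements, so $\Poset(G,k)$ is a nonempty poset (with maximum $M_{n,k}$ by Lemma~\ref{top1}), and it remains only to exhibit its minimum.

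For membership, I would construct an explicit labeling. Reindex the components as $C_1,C_2,\ldots,C_k$ so that $|V(C_j)|=h_j$, and assign to $C_j$ the block of labels $L_j=\{p_{j-1}+1,p_{j-1}+2,\ldots,p_j\}$ with $p_0=0$; then $|L_j|=h_j=|V(C_j)|$ and $\max L_j=p_j$. Picking any vertex $v_j\in V(C_j)$, connectedness of $C_j$ means every vertex of $C_j$ is reachable from $v_j$, so a basic labeling of $(C_j,L_j,\{v_j\})$ is defined, and by Lemma~\ref{basiclabelinglemma} its pinnacle set is the single largest label of $L_j$, namely $p_j$. Since distinct components share neither vertices nor edges, the resulting labeling $\lambda$ of $G$ has pinnacle set $\{p_1,p_2,\ldots,p_k\}=P$, so $P\in\Pin(G,k)$.

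For minimality, let $Q=\{q_1,q_2,\ldots,q_k\}$ with $q_1<q_2<\cdots<q_k$ be a pinnacle set of $(G,\mu)$ for some labeling $\mu$. The key observation is that the pinnacles of $(G,\mu)$ are exactly the $k$ component-maxima: within any component $C$, the vertex carrying the largest $\mu$-label of $C$ has all its neighbors in $C$ with smaller labels, hence is a pinnacle; this yields $k$ distinct pinnacle values, and since $|Q|=k$ there are no others. Now reorder the components as $C_{\sigma(1)},\ldots,C_{\sigma(k)}$ by increasing largest label, so that $q_j$ is the largest $\mu$-label appearing in $C_{\sigma(j)}$. Fix $i\in[k]$. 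Every vertex of $C_{\sigma(1)}\cup\cdots\cup C_{\sigma(i)}$ carries a $\mu$-label at most $q_i$, and the number of such vertices is $\sum_{j=1}^{i}h_{\sigma(j)}\geq\sum_{j=1}^{i}h_j=p_i$, the inequality holding because $h_1\leq\cdots\leq h_k$, so any $i$ of the $h_j$'s sum to at least the $i$ smallest. On the other hand, since $\mu$ is a bijection onto $[n]$, exactly $q_i$ vertices of $G$ carry labels in $\{1,2,\ldots,q_i\}$. Hence $q_i\geq p_i$, and as $i$ was arbitrary, $P\preceq Q$. Together with $P\in\Pin(G,k)$, this shows $P$ is the minimum element of $\Poset(G,k)$.

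The one point requiring care is that the ordering of components by size (which defines the $p_i$) need not coincide with the ordering by largest label (which drives the counting argument in the last paragraph); this is precisely why the rearrangement inequality $\sum_{j=1}^{i}h_{\sigma(j)}\geq\sum_{j=1}^{i}h_j$ is invoked rather than a termwise comparison. Everything else is routine bookkeeping with basic labelings and with counting labels below a threshold.
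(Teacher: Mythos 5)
Your proposal is correct and follows essentially the same route as the paper: the minimality half is the paper's argument verbatim (pinnacles are exactly the component maxima, order components by largest label, count labels at most $q_i$, and apply the sorted-sums inequality $\sum_{j=1}^{i}h_{\sigma(j)}\geq\sum_{j=1}^{i}h_j$). The only cosmetic difference is in the existence half, where you apply Lemma~\ref{basiclabelinglemma} directly to each component instead of packaging the same basic labelings through a BFS ordered tree partition and Lemma~\ref{otp2}; both yield the identical labeling.
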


\begin{proof}
     Let $H_1, H_2, \ldots, H_k$ be the connected components of $G$ where $|V(H_i)| = h_i$, $i = 1,2, \ldots, k$.  As stated in the theorem, let $h_1 \leq h_2 \leq \cdots \leq h_k$.  For each $H_i$, pick a vertex $r_i$ and run the breadth-first search (BFS) algorithm starting at $r_i$.  Let $T_i$ denote the resulting BFS-tree rooted at $r_i$.  By construction, all the neighbors of $r_i$ in $H_i$ will be its children in $T_i$.  It is straightforward to check that $(T_1, T_2, \ldots, T_k)$ is an ordered tree partition of $G$.  Consequently, by Lemma~\ref{otp2}, $P = \{p_1, p_2, \ldots, p_k\}$ is a pinnacle set of $G$.

     Let $Q = \{q_1, q_2, \ldots, q_k\}$ with $q_1 < q_2 < \cdots q_k$ be a pinnacle set of $G$.  Let $\lambda'$ be a labeling of $G$ so that $Q$ is the pinnacle set of $(G, \lambda')$.
     We will now argue that $P \preceq Q$.  Let $w_i$ be the vertex such that $\lambda'(w_i) = q_i$.   Let $F_i$ be the connected component that contains $w_i$, and let its size be $f_i$.  Since there are $k$ components and the pinnacle set of $(G, \lambda')$ has size $k$,  it has to be the case that $q_i$ is the largest label assigned by $\lambda'$ to the vertices in $F_i$ for $i = 1,2, \ldots, k$. In fact, we can say something stronger -- because $q_1 < q_2 < \cdots < q_i$, the largest label assigned by $\lambda'$ to all the vertices in $\bigcup_{j = 1}^i F_j$ is $q_i$.  Thus, $\sum_{j=1}^i f_j \leq q_i$.

     But $F_1, F_2, \ldots, F_k$ is a reordering of $H_1, H_2, \ldots, H_k$ and the sizes of the latter are ordered from smallest to largest.   It follows that $\sum_{j=1}^i h_j \leq \sum_{j=1}^i f_j$ because the $i$ smallest components of $G$ have sizes $h_1, h_2, \ldots, h_i$ respectively. Hence $p_i = \sum_{j=1}^i h_j \leq \sum_{j=1}^i f_j \leq q_i$  for $i = 1, 2,\ldots, k$.  Thus, $P \preceq Q$.  Since $Q$ is an arbitrary size-$k$ pinnacle set of $G$,  $P$ is the minimum element of $\Poset(G,k)$.  \end{proof}

\begin{corollary}
Suppose $G$ is a graph with $n$ vertices and $k$ connected components.  Let the sizes of these components be $h_1, h_2, \ldots, h_k$ with $h_1 \leq h_2 \leq \cdots \leq h_k$.  
Then $\Poset(G,k)$ is a distributive lattice whose maximum element is $M_{n,k} = \{n-k+1, \ldots, n-1, n\}$ and whose minimum element is $\{p_1, p_2, \ldots, p_k\}$ where $p_i = \sum_{j=1}^i h_j$ for $i = 1,2,\ldots, k$.   Equivalently, every size-$k$ subset $Q =  \{q_1, q_2, \ldots, q_k\}$ of $[n]$ is a pinnacle set of $G$ if and only if  $p_i \leq q_i \leq n-k + i$ for $i = 1,2,\ldots, k$.
\end{corollary}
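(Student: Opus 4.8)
The plan is to assemble this statement from results already established; essentially no new argument is required. The first thing I would check is that $\Pin(G,k)\neq\emptyset$, since all the poset machinery is stated under that hypothesis. This is immediate: because $G$ has exactly $k$ connected components, selecting one vertex from each component gives an independent set of size $k$ from which every vertex of $G$ is reachable, so Theorem~\ref{bigthm1} (or Corollary~\ref{bigthmcor2}) produces a size-$k$ pinnacle set. Hence $\Poset(G,k)$ is a nonempty poset to which the earlier structural results apply.

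Next I would invoke Theorem~\ref{thm:min element}, which says that $P=\{p_1,\dots,p_k\}$ with $p_i=\sum_{j=1}^i h_j$ is the minimum element of $\Poset(G,k)$, and Lemma~\ref{top1}, which says that $M_{n,k}=\{n-k+1,\dots,n\}$ is the maximum element. Since $\Poset(G,k)$ now has a minimum element, Corollary~\ref{cor4} upgrades the join-semilattice of Theorem~\ref{bigthm4} to a distributive lattice with the stated meet and join. That disposes of the first assertion of the corollary.

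For the ``equivalently'' reformulation I would argue both implications. If $Q=\{q_1<\cdots<q_k\}$ is a size-$k$ pinnacle set of $G$, then $Q\in\Pin(G,k)$, so $P\preceq Q\preceq M_{n,k}$ by minimality of $P$ and maximality of $M_{n,k}$; unwinding the definition of $\preceq$ on coordinate-sorted sets yields $p_i\le q_i\le n-k+i$ for all $i$. Conversely, suppose $Q=\{q_1<\cdots<q_k\}\subseteq[n]$ satisfies $p_i\le q_i$ for every $i$ (the upper inequalities $q_i\le n-k+i$ are automatic for any size-$k$ subset of $[n]$, since $q_i\le n-(k-i)$). Because $P$ is a pinnacle set of $G$ and $p_i\le q_i$ for all $i$, Theorem~\ref{bigthm2} shows that $Q$ is a pinnacle set of $G$ as well.

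I do not anticipate a genuine obstacle here: every ingredient — nonemptiness of $\Pin(G,k)$, the minimum element from Theorem~\ref{thm:min element}, the maximum element from Lemma~\ref{top1}, distributivity from Corollary~\ref{cor4}, and the ``push-up'' Theorem~\ref{bigthm2} — is already in place. The only care needed is the routine translation between the order $\preceq$ and the coordinatewise inequalities $p_i\le q_i\le n-k+i$, together with the observation that the upper bound is vacuous, so the substance of the characterization is carried entirely by the lower bound $p_i\le q_i$.
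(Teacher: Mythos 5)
Your proposal is correct and is exactly the assembly the paper intends: the corollary is stated without proof precisely because it follows immediately from Theorem~\ref{thm:min element}, Lemma~\ref{top1}, Corollary~\ref{cor4}, and Theorem~\ref{bigthm2} in the way you describe. Your added observations — checking $\Pin(G,k)\neq\emptyset$ via the $k$ components and noting that the upper bounds $q_i\le n-k+i$ are automatic for any size-$k$ subset of $[n]$ — are exactly the right routine details to supply.
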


     Next, assume $G$ has  size-$2$ pinnacle sets.  If $G$ has $n$ vertices, then every size-$2$ pinnacle set of $G$ has the form $\{s, n\}$ with $1 < s < n$. Thus, the poset $\Poset(G,2)$ is a linear order and therefore has a minimum element.  Now, there are only two possibilities for $G$: when $G$ is connected or $G$ has two connected components. Theorem~\ref{bottomthm1} identifies the minimum element of $\Poset(G,2)$ when $G$ has two connected components.  In the next theorem, we address first case when $G$ is connected.

 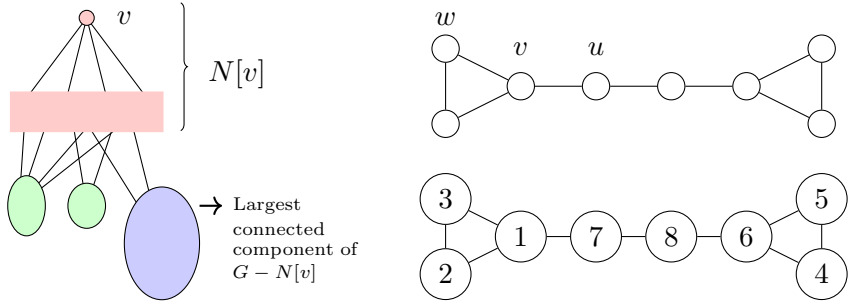
\begin{figure}
\begin{center}
 \begin{tikzpicture}
 [roundnode/.style={circle, draw=black, inner sep=0pt,  minimum size=2mm}]
    \draw[line width=0.1mm];
    \draw (0,0)--(-.75,-1);
    \draw (0,0)--(-.25,-1);
    \draw (0,0)--(.25,-1);
    \draw (0,0)--(.75,-1);
    \node[roundnode, fill=red!20] at (0,0) {};
    \node[] at (.5,0){$v$};
    \draw(-.9,-2.5)--(-.8,-1);
    \draw(-.9,-2.5)--
    (-.4,-1);
    \draw(-.9,-2.5)--
    (.4,-1);
    \draw(-.9,-2.5)--
    (1,-1);
    \draw(0,-2.5)--
    (.5,-1);
    \draw(0,-2.5)--
    (-.25,-1);
    \draw(1,-3)--
    (-.25,-1);
    \draw(1,-3)--
    (.5,-1);
    \draw[red!20, very thick, fill=red!20] (-1,-1) rectangle (1,-1.5);
    \draw[fill=green!20] (-.8,-2.5) ellipse 
    (.25cm and .4cm);
    \draw[fill=green!20] (0,-2.5) ellipse (.25cm and .3cm);
    \draw[fill=blue!20] (1,-3) ellipse (.5cm and .75cm);
    \draw[->, thick](1.5,-2.5)--(1.8,-2.5);
    \node at (2.4,-2.5){\tiny{Largest}};
    \node at (2.55,-2.8){\tiny{connected}};
    \node at (2.77,-3.1){\tiny{component of}};
    \node at (2.5,-3.4){\tiny{$G-N[v]$}};
    \node at (2,-.75){$N[v]$};
    \draw [decoration={brace}, decorate] (1.25,.2) -- node [pos=0.5,style={left=3pt}] {\small{}} (1.25,-1.5);
 \end{tikzpicture}
\hspace{0in}
\begin{tikzpicture}[roundnode/.style={circle, draw=black,  minimum size=.2mm}]
    \node[roundnode](A1) at (0,0){};
    \node at (-0.66,0){};
    \node[roundnode](A2) at (0,1){};
    \node at (0,1.4){$w$};
    \node[roundnode](A3) at (1,0.5){};
    \node at (1,1){$v$};
    \node[roundnode,](A4) at (2,0.5){};
    \node at (2,1){$u$};
    \node[roundnode](A5) at (3,0.5){};
    \node[roundnode](A6) at (4,0.5){};
    \node[roundnode](A7) at (5,1){};
    \node[roundnode](A8) at (5,0){};
    \node at (5.66,0){};
    \draw (A3)--(A2)--(A1)--(A3)--(A4)--(A5)--(A6)--(A7)--(A8)--(A6);
   \node[roundnode](B1) at (0,-1){3};
    \node at (-0.66,0){};
    \node[roundnode](B2) at (0,-2){2};
    \node[roundnode](B3) at (1,-1.5){1};
    \node[roundnode,](B4) at (2,-1.5){7};
    \node[roundnode](B5) at (3,-1.5){8};
    \node[roundnode](B6) at (4,-1.5){6};
    \node[roundnode](B7) at (5,-1){5};
    \node[roundnode](B8) at (5,-2){4};
    \node at (5.66,0){};
    \draw (B3)--(B2)--(B1)--(B3)--(B4)--(B5)--(B6)--(B7)--(B8)--(B6);
\end{tikzpicture}

\end{center}
\caption{The image on the left identifies the largest connected component of $G-N[v]$ for some vertex $v$.  Its size is $\alpha_v$.  For the graph on the top right, it is easy to check that $\alpha_u = 3$, $\alpha_v = 4$ while $\alpha_w = 5$.  Due to the symmetry of the graph, all other vertices $x$ of the graph will have $\alpha_x$ equal to $\alpha_u, \alpha_v$ or $\alpha_w$.  According to Theorem \ref{bottomthm2}, $s^* = 8 - 5 = 3$ so the graph's minimum size-$2$ pinnacle set is $\{3, 8\}$, which is achieved by the labeling  at the bottom right.}
\label{Size2fig}
\end{figure}

\begin{theorem}
Let $G$ be a connected graph with $n \ge 2$ vertices.  $G$ has a size-$2$ pinnacle set if and only if $G \neq K_n$.  Furthermore,   
when  $G \neq K_n$, the poset $\Poset(G,2)$ is a linear order whose  minimum element is $\{s^*, n\}$,  where  $s^* = n - \max\{\alpha_v, v \in V(G)\}$ and $\alpha_v$ is the size of the largest connected component of $G-N[v]$, the subgraph obtained by removing $v$ and its neighbors.
\label{bottomthm2}
\end{theorem}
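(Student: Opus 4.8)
The plan is to treat the two assertions in turn, the second via ordered tree partitions. Since $G$ is connected, Corollary~\ref{bigthmcor} says that $G$ has a size-$2$ pinnacle set if and only if $G$ has an independent set of size $2$, i.e.\ if and only if $G\neq K_n$; this settles the first assertion. For the second, first observe that $\Poset(G,2)$ is a linear order: by Proposition~\ref{observe1} every size-$2$ pinnacle set of $G$ has the form $\{s,n\}$ with $2\le s\le n-1$ (here $s\ge 2$ because the vertex labeled $1$ has a neighbor in the connected graph $G$, so $1$ is never a pinnacle), and $\{s,n\}\preceq\{s',n\}$ iff $s\le s'$. Hence $\Poset(G,2)$ has a minimum element, namely $\{s^{\min},n\}$ where $s^{\min}$ is the least $s$ for which $\{s,n\}$ is a pinnacle set of $G$, and it remains to show $s^{\min}=s^*$.

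The key reduction combines Lemmas~\ref{otp1} and~\ref{otp2} with Theorem~\ref{bigthm2}: the set $\{s,n\}$ is a pinnacle set of $G$ if and only if $G$ admits an ordered tree partition $(T_1,T_2)$ with $|V(T_1)|\le s$. Indeed, if $\{s,n\}$ is a pinnacle set, Lemma~\ref{otp1} (applied to $Q=\{s,n\}$) produces such a partition; conversely, given such a partition, Lemma~\ref{otp2} makes $\{|V(T_1)|,\,|V(T_1)|+|V(T_2)|\}=\{|V(T_1)|,n\}$ a pinnacle set, and Theorem~\ref{bigthm2} then promotes it to $\{s,n\}$ since $|V(T_1)|\le s$. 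Therefore $s^{\min}=\min\{|V(T_1)|:(T_1,T_2)\text{ an ordered tree partition of }G\}=n-M$, where $M:=\max\{|V(T_2)|:(T_1,T_2)\text{ an ordered tree partition of }G\}$, so it suffices to prove $M=\max_{v\in V(G)}\alpha_v$.

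For the inequality $M\le\max_v\alpha_v$: in any ordered tree partition $(T_1,T_2)$ with roots $r_1,r_2$, condition (iii) of Definition~\ref{def:ordered tree partition} forces every $G$-neighbor of $r_1$ to be a child of $r_1$ in $T_1$ (a neighbor of $r_1$ cannot be $r_2$, as $\{r_1,r_2\}$ is independent), so $V(T_2)\cap N[r_1]=\emptyset$; since $T_2$ is a connected subgraph of $G-N[r_1]$, the set $V(T_2)$ lies inside a single connected component of $G-N[r_1]$, whence $|V(T_2)|\le\alpha_{r_1}\le\max_v\alpha_v$. For the reverse inequality, pick $v^*$ attaining $\max_v\alpha_v$, let $C$ be a largest connected component of $G-N[v^*]$ (so $|C|=\alpha_{v^*}$), and set $R=V(G)\setminus C$. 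Then $N[v^*]\subseteq R$, and $G[R]$ is connected: the star $G[N[v^*]]$ is connected, and every other component of $G-N[v^*]$ attaches to $N(v^*)$ because $G$ is connected and $v^*$ has no neighbor outside $N[v^*]$. Take $T_1$ a breadth-first search tree of $G[R]$ rooted at $v^*$ and $T_2$ a breadth-first search tree of $G[C]$ rooted at any $r_2\in C$; conditions (i) and (ii) are immediate, and condition (iii) holds because BFS makes every $G$-neighbor of $v^*$ a child of $v^*$ in $T_1$, while any vertex adjacent to $r_2$ but not to $v^*$ necessarily lies in $C$ and so becomes a child of $r_2$ in $T_2$. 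This ordered tree partition has $|V(T_2)|=|C|=\alpha_{v^*}$, giving $M\ge\max_v\alpha_v$, hence equality.

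Putting this together, $s^*=n-\max_v\alpha_v=s^{\min}$; since $\alpha_v\le n-1-\deg(v)\le n-2$ for every $v$ we get $s^*\ge 2$, and since $G\neq K_n$ some $\alpha_v\ge 1$ forces $s^*\le n-1$, so $\{s^*,n\}$ is a genuine size-$2$ pinnacle set and is the minimum element of the linear order $\Poset(G,2)$. The step I expect to be the main obstacle is the careful verification that the breadth-first search trees $(T_1,T_2)$ satisfy condition (iii) of an ordered tree partition, together with the connectivity of $G[R]$; everything else is routine bookkeeping resting on results already established.
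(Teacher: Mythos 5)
Your proof is correct and follows essentially the same route as the paper: both use ordered tree partitions, the BFS construction rooted at $v^*$ and at a vertex of the largest component of $G-N[v^*]$ to realize $\{n-\alpha_{v^*},n\}$, and the observation that $T_2$ must sit inside a single component of $G-N[r_1]$ to show optimality. The only cosmetic difference is that you package the optimality step as an if-and-only-if characterization via Lemma~\ref{otp1}, Lemma~\ref{otp2} and Theorem~\ref{bigthm2}, whereas the paper invokes Lemma~\ref{otp3} for the bottom pinnacle set; these are interchangeable.
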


\begin{proof}
Every connected graph with $n \ge 2$ vertices has an independent set of size two if and only if it has a pair of non-adjacent vertices.  The latter is true if and only if the graph is not a complete graph.  Let $G$ be one such graph.  

Since $G \neq K_n$, there is at least one vertex of $G$ that is not adjacent to all the vertices of $G$.  Let $v$ be one such vertex.  We will show that $\{n- \alpha_v, n\}$ is a pinnacle set of $G$.  By our choice of $v$, $G - N[v]$ is not empty.  Let $C$ be the largest connected component of $G - N[v]$, and let $w$ be a vertex in $C$.  Notice that $\{v, w\}$ is an independent set of $G$.  Furthermore, $G-C$ is a connected subgraph of $G$ since $G$ is connected so there must be edges between every connected component of $G-N[v]$ and $N[v]$.  See Figure~\ref{Size2fig} for an illustration of $G, N[v]$ and $C$.  

Run the BFS algorithm on $G - C$ starting at $v$. Let $T_v$ be the resulting BFS-tree rooted at $v$.  $T_v$ has  $n - |V(C)|$ vertices.  Next, run the breadth-first search algorithm on $C$ starting at $w$.  Let $T_w$ be the resulting BFS-tree rooted at $w$. $T_w$  has size $|V(C)|$.  It is easy to verify that $(T_v, T_w)$ satisfies conditions (i) and (ii) of the ordered tree partition of $G$.  Furthermore, condition (iii) is satisfied because we used the BFS algorithm so all the neighbors of $v$ are its children in $T_v$ and all the neighbors of $w$ not in $N[v]$ are its children in $T_w$.   

By Lemma~\ref{otp2}, since $(T_v, T_w)$ is an ordered tree partition of $G$,  $\{n- |V(C)|, n\} = \{n - \alpha_v, n\}$ is a pinnacle set of $G$.  Let $v^*$ be a vertex of $G$ such that $\alpha_{v^*} = \max \{\alpha_v, v \in V(G)\}$.  The same argument above implies that $\{n - \alpha_{v^*}, n\}$ is a pinnacle set of $G$.  Now if the minimum element of $\Poset(G,2)$ is $\{s^*, n\}$, we will argue next that $ s^* \ge n - \alpha_{v^*}$ so it must be the case that  $ s^* = n - \alpha_{v^*}$. 

By Lemma~\ref{otp3}, because $\{s^*, n\}$ is a bottom size-2 pinnacle set of $G$, there is an ordered tree partition $(T_1, T_2)$ of $G$ so that  $s^* = |V(T_1)| = n - |V(T_2)|$.  Let $r_1$ and $r_2$ be the roots of $T_1$ and $T_2$ respectively.  By definition, the neighbors of $r_1$ are its children in $T_1$.  So consider $G - N[r_1]$.  The graph cannot be empty because $r_2$ exists and $\{r_1, r_2\}$ form an independent set.   Assume $G - N[r_1]$ has one or more connected components and $r_2$ is in the connected component $D$.  Then $T_2$ has to be a subgraph of $D$ since $T_2$ is connected.  It follows that $|V(T_2)| \leq |V(D)|$.  But $D$ is one of the connected components of $G - N[r_1]$ so $|V(D)| \leq \alpha_{r_1}$.  Consequently, $|V(T_2)|  \leq \alpha_{r_1} \leq \alpha_{v^*}$ so $s^* = n - |V(T_2)| \ge n - \alpha_{v^*}$.   We have now established that  $\{n - \alpha_{v^*}, n\}$ is the minimum element of $\Poset(G,2)$. 
   \end{proof}

In Figure~\ref{Size2fig}, consider the graph $G$ on the top right. To compute $s^*$ we only have to consider the vertices $u, v$ and $w$ because of the symmetries of the graph.  It is easy to check that $\alpha_u = 3$, $\alpha_v = 4$ while $\alpha_w = 5$.  Hence, the minimum element of $\Poset(G,2)$ is $\{8-5, 8 \} = \{3, 8\}$.

\subsection{Counting distinct $k$-size pinnacle sets.} Let $\pinn(G,k)$ be the number of size-$k$ pinnacle sets of $G$; i.e., $\pinn(G,k) = |\Pin(G,k)|$.  We show how to compute $\pinn(G,k)$ when $G$ has a unique bottom size-$k$ pinnacle set.

\begin{theorem}\label{thm:count pinnacle sets}
    If $B=\{b_1,b_2,\ldots,b_k\}$ is only the bottom pinnacle of size $k$ of the graph $G$, then 
    \[\pinn(G,k)=\sum_{i_{k-1}=b_{k-1}}^{b_k-1}\cdots \sum_{i_{2}=b_{2}}^{i_3-1}\sum_{i_1=b_1}^{i_2-1}1\]
    is the total number of distinct pinnacle sets of size $k$. 
\end{theorem}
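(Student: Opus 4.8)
The plan is to leverage the structural results already proved. Since $B$ is the \emph{unique} bottom size-$k$ pinnacle set of $G$, it is the minimum element of $\Poset(G,k)$, and by Lemma~\ref{top1} the maximum element of $\Poset(G,k)$ is $M_{n,k}=\{n-k+1,\ldots,n\}$. Combining this with Theorem~\ref{bigthm2} (if $P$ dominates a pinnacle set coordinatewise then $P$ is itself a pinnacle set) yields the exact description I need: a size-$k$ subset $P=\{p_1<p_2<\cdots<p_k\}$ of $[n]$ is a pinnacle set of $G$ if and only if $b_i\le p_i\le n-k+i$ for every $i\in[k]$. Thus $\pinn(G,k)$ is the number of strictly increasing integer tuples $(p_1,\ldots,p_k)$ obeying these two-sided bounds, and the remainder of the proof is an elementary enumeration.

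Next I would discard the upper bounds as redundant. By Proposition~\ref{observe1}, $b_k=n$, so $n=b_k\le p_k\le n$ forces $p_k=n$. Given $p_k=n$ together with $p_1<\cdots<p_k$, a short downward induction gives $p_{k-j}\le n-j$ for $j=0,1,\ldots,k-1$, i.e.\ $p_i\le n-k+i$ holds automatically. Consequently the binding constraints are exactly $p_i\ge b_i$ for $i\le k-1$ and $p_{k-1}<p_k=n$, i.e.\ $p_{k-1}\le b_k-1$. So $\pinn(G,k)$ equals the number of strictly increasing tuples $(p_1,\ldots,p_{k-1})$ with $b_i\le p_i$ for each $i\le k-1$ and $p_{k-1}\le b_k-1$ (and when $k=1$ there is nothing to count beyond the forced set $\{n\}$, which matches the empty iterated sum).

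Finally I would count these tuples from the top coordinate down. Put $i_{k-1}:=p_{k-1}$, which ranges over $b_{k-1}\le i_{k-1}\le b_k-1$; having fixed $i_{k-1},\ldots,i_{j+1}$, the coordinate $i_j:=p_j$ ranges over $b_j\le i_j\le i_{j+1}-1$, an empty range simply contributing $0$ (the tuple coming from $B$ itself shows at least one admissible tuple exists). Since each admissible $(i_1,\ldots,i_{k-1})$ corresponds to exactly one pinnacle set $\{i_1,\ldots,i_{k-1},n\}$ and distinct tuples give distinct sets, summing $1$ over all admissible tuples reproduces precisely the nested sum in the statement. The only nontrivial ingredient is the interval characterization of $\Pin(G,k)$ via Theorem~\ref{bigthm2} and Lemma~\ref{top1}; after that the argument is pure bookkeeping, so the main point to be careful about is verifying that the $M_{n,k}$-side inequalities are automatic once $p_k=n$ is pinned down.
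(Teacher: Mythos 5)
Your proposal is correct and follows essentially the same route as the paper: both reduce the count to enumerating the strictly increasing tuples that dominate $B$ coordinatewise and have last entry $b_k=n$, which is exactly what the nested sum records. If anything, your write-up is the more careful one, since you make explicit (via the uniqueness of the bottom element, Lemma~\ref{top1}, and Theorem~\ref{bigthm2}) the two-sided characterization of $\Pin(G,k)$ guaranteeing that the sum neither overcounts nor misses any size-$k$ pinnacle set --- a completeness step the paper's iterative-increment argument leaves largely implicit.
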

\begin{proof}
    From Lemma~\ref{nextpinnacleset}, \(\{b_1,\dots,b_{j-1},i,b_{j+1},\dots,b_k\}\) is also a pinnacle set where \(i\in [b_j,b_{j+1}-1]\). Notice that every time \(b_j\) is incremented by 1 we obtain a new pinnacle set of size \(k\). In order to count the total number of distinct pinnacle sets of size \(k\), we let \(i_j=b_j\) for \(1\leq j<k\) and proceed to increment \(i_1\) until \(i_1=i_2-1\). We select \(i_t\) such that \(i_t\) can be incremented by at least 1 and \(t\) is as small as possible. When \(i_t\) is incremented by 1, we reset \(i_j=b_j\) for \(1\leq j<t\). We then repeat the process of incrementing the smallest \(i\) such that \(i\) can be incremented. This iterative process is repeated until \(i_{k-1}=b_k-1,i_{k-2}=i_{k-1}-1,\dots,i_1=i_2-1\). This yields the total number of distinct pinnacle sets of size \(k\) along with the following summation as desired: \[\sum_{i_{k-1}=b_{k-1}}^{b_k-1}\cdots \sum_{i_{2}=b_{2}}^{i_3-1}\sum_{i_1=b_1}^{i_2-1}1.\qedhere\] 
\end{proof}

As an application of Theorem \ref{thm:count pinnacle sets} we give formulas for $\pinn(C_n,k)$ and $\pinn(P_n,k)$, 
the number of pinnacle sets of size $k$ for the cycle and path graphs. 
Before stating those results we recall that for a nonnegative integer $a$ and negative integer $b$, $\binom{a}{b}=0$.

\begin{corollary}\label{cor:count pinnacles of cycle graph}
Let $n\geq 3$ and $1\leq k\leq \lfloor\frac{n}{2}\rfloor$.
    Then 
    $\pinn(C_n,k)=\binom{n-2}{k-1} - \binom{n-2}{k-2}$.    
\end{corollary}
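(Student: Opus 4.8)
The plan is to translate the problem, via Corollary~\ref{cor:gaps for cycle}, into counting subsets of $[n-1]$ subject to a ``gap'' condition, and then to evaluate that count with the reflection principle for lattice paths. (Equivalently, one could start from Theorem~\ref{thm:count pinnacle sets} together with Proposition~\ref{prop:bottom pinnacle set of cycle}, which says the unique bottom size-$k$ pinnacle set of $C_n$ is $\{3,5,\ldots,2k-1,n\}$, and evaluate the resulting nested sum; this is the same count dressed differently.)

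First I would reduce to a set-counting problem. By Corollary~\ref{cor:gaps for cycle}, a set $S=\{s_1<s_2<\cdots<s_k\}\subseteq[n]$ is a pinnacle set of $C_n$ if and only if $s_k=n$ and $s_i\ge 2i+1$ for all $1\le i\le k-1$. Hence $\pinn(C_n,k)$ equals the number $N$ of $(k-1)$-element subsets $\{s_1<\cdots<s_{k-1}\}$ of $[n-1]$ with $s_i\ge 2i+1$ for every $i$; when $k=1$ this count is $1$, matching $\binom{n-2}{0}-\binom{n-2}{-1}$, so I would then assume $k\ge 2$.

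Next I would encode each such subset as a word $w=w_1w_2\cdots w_{n-1}$ over $\{U,D\}$ with $w_j=U$ precisely when $j\in\{s_1,\ldots,s_{k-1}\}$; this word has $k-1$ letters $U$ and $n-k$ letters $D$. Let $h_j$ denote the number of $D$'s minus the number of $U$'s among $w_1\cdots w_j$. The key (routine) check is that the conditions $s_i\ge 2i+1$ for all $i$ are equivalent to $h_j\ge 1$ for all $1\le j\le n-1$: if $h_j\ge1$ always and $w_{s_i}$ is the $i$-th $U$, then $h_{s_i}=(s_i-i)-i\ge1$ forces $s_i\ge 2i+1$; conversely, if a prefix $w_1\cdots w_j$ contains $i$ letters $U$ with $i\ge1$, then its $i$-th $U$ sits at position $s_i\le j$, so $h_j=j-2i\ge s_i-2i\ge1$, while for $i=0$ one has $h_j=j\ge1$. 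In particular $w_1=D$; deleting it yields a word $w'=w_2\cdots w_{n-1}$ of length $n-2$ with $k-1$ letters $U$ and $n-k-1$ letters $D$, all of whose prefixes contain at least as many $D$'s as $U$'s, and this passage $w\leftrightarrow w'$ is a bijection.

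Finally I would invoke the reflection principle in its standard form: the number of words on $a$ letters $D$ and $b$ letters $U$ with $a\ge b$ in which every prefix has at least as many $D$'s as $U$'s is $\binom{a+b}{b}-\binom{a+b}{b-1}$. Applying this with $a=n-k-1$ and $b=k-1$ gives $N=\binom{n-2}{k-1}-\binom{n-2}{k-2}$, and the hypothesis $k\le\lfloor n/2\rfloor$ is exactly what guarantees $a\ge b$ (that is, $n\ge 2k$), so the formula applies and outputs a nonnegative value. The only step with genuine content is the translation in the previous paragraph — verifying that the gap conditions $s_i\ge 2i+1$ correspond precisely to the positivity/ballot condition on the associated word — and recognizing that the stated range of $k$ is precisely the one in which the reflection formula is valid; everything else is bookkeeping and a citation of the classical count.
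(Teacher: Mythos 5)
Your proof is correct, but it takes a genuinely different route from the paper's. The paper proves this corollary by induction on $n$: it combines Proposition~\ref{prop:bottom pinnacle set of cycle} (the unique bottom size-$k$ pinnacle set of $C_n$ is $\{3,5,\ldots,2k-1,n\}$) with the nested-sum formula of Theorem~\ref{thm:count pinnacle sets}, peels off the outermost summand to obtain the recurrence $\pinn(C_{n+1},k)=\pinn(C_n,k-1)+\pinn(C_n,k)$, and closes with Pascal's identity. You instead work directly from the gap characterization of Corollary~\ref{cor:gaps for cycle}, encode the admissible sets $\{s_1<\cdots<s_{k-1}\}\subseteq[n-1]$ with $s_i\ge 2i+1$ as ballot sequences, and invoke the reflection principle; your translation of the conditions $s_i\ge 2i+1$ into the prefix condition $h_j\ge 1$ is verified in both directions and is sound, as is the reduction to words of length $n-2$ and the check that $k\le\lfloor n/2\rfloor$ gives $a\ge b$ (the $k=1$ edge case is also handled). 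What each approach buys: yours is a direct bijective explanation of why the answer is an entry of the Catalan triangle, and it bypasses the machinery of Section~5 entirely (no need for the bottom-pinnacle-set proposition or the nested-sum theorem, only the Section~3 characterization plus the classical ballot count); the paper's inductive argument, by contrast, makes the Pascal-type recurrence of the triangle explicit and reuses infrastructure that is also needed for the path-graph count. Both are valid; if you wanted your version fully self-contained you would spell out (or precisely cite) the reflection-principle identity $\binom{a+b}{b}-\binom{a+b}{b-1}$ for nonnegative words, but that is a standard fact and citing it is reasonable.
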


\begin{proof}
If $n=3$, then $k=1$, and the bottom pinnacle set of size $1$ is given by $\{3\}$, hence $\pinn(C_3,1)=1$. Note that $\binom{3-1}{1-1}-\binom{3-2}{1-2}=1-0=1$, as desired. 
Assume that for all $\ell\leq n$ and for all $1\leq k\leq \lfloor\frac{n}{2}\rfloor$, 
we have $\pinn(C_n,k)=\binom{n-2}{k-1}-\binom{n-2}{k-2}$.
Now consider the number of pinnacle sets of size $k$ for the cycle on $n+1$ vertices.
By Proposition \ref{prop:bottom pinnacle set of cycle} the bottom pinnacle set of size $k$ for  $C_n$ is given by $\{3,5,\ldots,2k-1,n\}$, while the bottom pinnacle set of size $k$ for $C_{n+1}$ is $\{3,5,\ldots,2k-1,n+1\}$.
By Theorem~\ref{thm:count pinnacle sets} we have that 
\begin{align*}
    \pinn(C_{n+1},k)&=\sum_{i_{k-1}=2k-1}^{(n+1)-1}\sum_{i_{k-2}=2k-3}^{i_{k-1}-1}\cdots \sum_{i_{2}=5}^{i_3-1}\sum_{i_1=3}^{i_2-1}1\\
    &=\sum_{i_{k-1}=2k-1}^{n}\sum_{i_{k-2}=2k-3}^{i_{k-1}-1}\cdots \sum_{i_{2}=5}^{i_3-1}\sum_{i_1=3}^{i_2-1}1\\
    &=\underbrace{\sum_{i_{k-2}=2k-3}^{n-1}\cdots \sum_{i_{2}=5}^{i_3-1}\sum_{i_1=3}^{i_2-1}1}_{i_{k-1}=n}+\sum_{i_{k-1}=2k-1}^{n-1}\sum_{i_{k-2}=2k-3}^{i_{k-1}-1}\cdots \sum_{i_{2}=5}^{i_3-1}\sum_{i_1=3}^{i_2-1}1\\
    &=\pinn(C_n,k-1)+\pinn(C_n,k)&\mbox{(by Theorem \ref{thm:count pinnacle sets})}\\
    &=\left(\binom{n-2}{k-2}-\binom{n-2}{k-3}\right)+\left(\binom{n-2}{k-1}-\binom{n-2}{k-2}\right)&\mbox{(by induction hypothesis)}\\
    &=\left(\binom{n-2}{k-1}+\binom{n-2}{k-2}\right)-\left(\binom{n-2}{k-2}+\binom{n-2}{k-3}\right)&\mbox{(by rearranging the terms)}\\
    &=\binom{n-1}{k-1}-\binom{n-1}{k-2},&\mbox{(by Pascal's identity)}
\end{align*}
as claimed.
\end{proof}

\begin{corollary}\label{couting all pinnacle sets of Cn}
For $n\geq 3$, the number of distinct pinnacle sets of $C_n$ is given by 
    \[\pinn(C_n)=\binom{n-2}{\lfloor\frac n2\rfloor-1}.\]
\end{corollary}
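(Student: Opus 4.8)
The plan is to sum the size-$k$ counts over all admissible $k$ and exploit that the summand, as computed in Corollary~\ref{cor:count pinnacles of cycle graph}, telescopes. Since every pinnacle set has a well-defined size, pinnacle sets of different sizes are automatically distinct, so
\[
\pinn(C_n)=\sum_{k=1}^{\lfloor n/2\rfloor}\pinn(C_n,k).
\]
First I would substitute $\pinn(C_n,k)=\binom{n-2}{k-1}-\binom{n-2}{k-2}$ from Corollary~\ref{cor:count pinnacles of cycle graph}, valid for all $1\leq k\leq\lfloor n/2\rfloor$.

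Next, writing $a_j=\binom{n-2}{j}$ for $j\geq -1$ (with the stated convention $\binom{n-2}{-1}=0$), the sum becomes
\[
\sum_{k=1}^{\lfloor n/2\rfloor}\bigl(a_{k-1}-a_{k-2}\bigr),
\]
which telescopes to $a_{\lfloor n/2\rfloor-1}-a_{-1}=\binom{n-2}{\lfloor n/2\rfloor-1}-0$. This yields the claimed formula $\pinn(C_n)=\binom{n-2}{\lfloor n/2\rfloor-1}$.

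There is essentially no obstacle here beyond bookkeeping: the one point to state carefully is the boundary convention $\binom{n-2}{-1}=0$ (already recalled before Corollary~\ref{cor:count pinnacles of cycle graph}), which kills the lower telescoping term, and the observation that the maximum size of a pinnacle set of $C_n$ is exactly $\lfloor n/2\rfloor$ (so the sum has the right range and no terms are missed). Optionally I would add the sanity check against Table~\ref{tab:num_pin_sets_cycle}, e.g. for $n=7$ the row sums to $1+4+5=10=\binom{5}{2}$, matching $\binom{n-2}{\lfloor n/2\rfloor-1}$.
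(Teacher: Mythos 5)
Your proof is correct and follows the same route as the paper: the paper's own proof is simply ``sum Corollary~\ref{cor:count pinnacles of cycle graph} over $1\leq k\leq\lfloor n/2\rfloor$,'' with the telescoping left implicit. You have merely written out the telescoping and the boundary convention $\binom{n-2}{-1}=0$ explicitly, which is a faithful (and slightly more careful) version of the same argument.
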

\begin{proof}
    This follows directly from Corollary \ref{cor:count pinnacles of cycle graph} and taking a sum over $1\leq k\leq \lfloor\frac n2\rfloor$.
\end{proof}

\begin{corollary}\label{cor:count pinnacles of path graph}
Let $n\geq 3$ and $1\leq k\leq \lceil\frac{n}{2}\rceil$.
    The $\pinn(P_n,k)=\binom{n-1}{k-1} - \binom{n-1}{k-2}$.    
\end{corollary}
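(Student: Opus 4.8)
The plan is to run an induction on $n$, paralleling the proof of Corollary~\ref{cor:count pinnacles of cycle graph}. For the base case $n=3$ the admissible sizes are $k=1$ and $k=2$; by Proposition~\ref{prop:bottom pinnacle set of path} the unique bottom size-$k$ pinnacle sets of $P_3$ are $\{3\}$ and $\{2,3\}$, and Theorem~\ref{thm:count pinnacle sets} gives $\pinn(P_3,1)=1=\binom{2}{0}-\binom{2}{-1}$ and $\pinn(P_3,2)=1=\binom{2}{1}-\binom{2}{0}$, as required.

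For the inductive step, assume the formula holds for $P_n$ and fix $k$ with $2\le k\le\lceil\frac{n+1}{2}\rceil$; the case $k=1$ is immediate since $\{n+1\}$ is the only size-$1$ pinnacle set of $P_{n+1}$ (Proposition~\ref{observe1}) and $\binom{n}{0}-\binom{n}{-1}=1$. By Proposition~\ref{prop:bottom pinnacle set of path} the unique bottom size-$k$ pinnacle set of $P_{n+1}$ is $\{2,4,\ldots,2k-2,n+1\}$, so Theorem~\ref{thm:count pinnacle sets} writes $\pinn(P_{n+1},k)$ as a nested sum whose outermost index $i_{k-1}$ runs over $[2k-2,\,n]$. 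Splitting off the term $i_{k-1}=n$ leaves precisely the nested sum that Theorem~\ref{thm:count pinnacle sets} attaches to the bottom size-$(k-1)$ pinnacle set $\{2,4,\ldots,2k-4,n\}$ of $P_n$, i.e.\ $\pinn(P_n,k-1)$, while the remaining part, with $i_{k-1}$ running over $[2k-2,\,n-1]$, is the nested sum attached to the bottom size-$k$ pinnacle set $\{2,4,\ldots,2k-2,n\}$ of $P_n$, i.e.\ $\pinn(P_n,k)$ (an empty sum equal to $0$ exactly when $2k-2>n-1$, which matches $\pinn(P_n,k)=0$ for $k>\lceil\frac n2\rceil$). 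Thus
\[\pinn(P_{n+1},k)=\pinn(P_n,k-1)+\pinn(P_n,k).\]
When $k\le\lceil\frac n2\rceil$ both summands are supplied by the inductive hypothesis, and substituting and applying Pascal's identity to $\bigl(\binom{n-1}{k-2}-\binom{n-1}{k-3}\bigr)+\bigl(\binom{n-1}{k-1}-\binom{n-1}{k-2}\bigr)$ yields $\binom{n}{k-1}-\binom{n}{k-2}$, the claimed value for $P_{n+1}$.

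The step I expect to require the most care is the boundary size $k=\lceil\frac{n+1}{2}\rceil$: when $n$ is odd this coincides with $\lceil\frac n2\rceil$ and is handled above, but when $n$ is even it equals $\lceil\frac n2\rceil+1$, so $\pinn(P_n,k)$ falls outside the inductive hypothesis. There one uses that a pinnacle set of $P_n$ has size at most $\lceil\frac n2\rceil$, so $\pinn(P_n,k)=0$, and checks—using Pascal together with $\binom{n-1}{n/2}=\binom{n-1}{n/2-1}$ (valid since $n-1$ is odd)—that $\pinn(P_n,k-1)=\binom{n-1}{k-2}-\binom{n-1}{k-3}$ still equals $\binom{n}{k-1}-\binom{n}{k-2}$; equivalently, the stated formula already evaluates to $0$ at this $k$, so the displayed recursion remains valid under the standing conventions. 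A cleaner route that avoids this bookkeeping altogether is to prove directly that $\pinn(P_n,k)=\pinn(C_{n+1},k)$: by Propositions~\ref{prop:bottom pinnacle set of cycle} and~\ref{prop:bottom pinnacle set of path} the relevant bottom pinnacle sets are $\{3,5,\ldots,2k-1,n+1\}$ and $\{2,4,\ldots,2k-2,n\}$, and the change of variable $i_j\mapsto i_j+1$ carries the nested sum of Theorem~\ref{thm:count pinnacle sets} for the former onto that for the latter; since $\lfloor\frac{n+1}{2}\rfloor=\lceil\frac n2\rceil$, Corollary~\ref{cor:count pinnacles of cycle graph} then immediately gives $\pinn(P_n,k)=\binom{(n+1)-2}{k-1}-\binom{(n+1)-2}{k-2}=\binom{n-1}{k-1}-\binom{n-1}{k-2}$.
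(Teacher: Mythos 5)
Your proposal is correct and is essentially the paper's own argument: the paper proves this corollary by declaring it ``analogous to'' the induction for $C_n$ using the bottom pinnacle set $\{2,4,\ldots,2(k-1),n\}$ from Proposition~\ref{prop:bottom pinnacle set of path}, and you have simply carried out that analogous induction in full, including the boundary check the paper leaves implicit. Your closing observation that the change of variable $i_j\mapsto i_j+1$ identifies the nested sum for $P_n$ with that for $C_{n+1}$ (so that the path formula follows at once from Corollary~\ref{cor:count pinnacles of cycle graph}) is a clean shortcut consistent with the paper's remark that the two tables differ by a shift of $n$, but it does not change the substance of the argument.
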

\begin{proof}
    The proof is analogous to that of Corollary \ref{cor:count pinnacles of cycle graph} by using Proposition \ref{prop:bottom pinnacle set of path}, which states that the bottom pinnacle of $P_n$ is $\{2,4,\ldots,2(k-1),n\}$.
\end{proof}

\begin{corollary}\label{couting all pinnacle sets of Pn}
For $n\geq 3$, the number of distinct pinnacle sets of $P_n$ is given by 
    \[\pinn(P_n)=\binom{n-1}{\lceil\frac n2\rceil-1}.\]
\end{corollary}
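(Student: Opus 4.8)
The plan is to obtain $\pinn(P_n)$ simply by summing the per-size counts $\pinn(P_n,k)$ from Corollary~\ref{cor:count pinnacles of path graph} over all admissible sizes $k$, and to observe that the resulting sum telescopes. Since the pinnacle sets of $P_n$ are partitioned by their size and, by the lemma preceding Theorem~\ref{thm:pinnacles of paths}, every pinnacle set of $P_n$ has size at most $\lceil n/2\rceil$, we have
\[
\pinn(P_n)=\sum_{k=1}^{\lceil n/2\rceil}\pinn(P_n,k)=\sum_{k=1}^{\lceil n/2\rceil}\left(\binom{n-1}{k-1}-\binom{n-1}{k-2}\right).
\]

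The key step is then purely arithmetic: the sum on the right is telescoping. Reindexing, the positive part contributes $\binom{n-1}{0}+\binom{n-1}{1}+\cdots+\binom{n-1}{\lceil n/2\rceil-1}$ and the negative part contributes $-\binom{n-1}{-1}-\binom{n-1}{0}-\cdots-\binom{n-1}{\lceil n/2\rceil-2}$, so all terms cancel except the final positive term $\binom{n-1}{\lceil n/2\rceil-1}$ and the first negative term $-\binom{n-1}{-1}$, which is $0$ by the convention recalled before Corollary~\ref{cor:count pinnacles of cycle graph}. Hence $\pinn(P_n)=\binom{n-1}{\lceil n/2\rceil-1}$.

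I do not anticipate any real obstacle here; the only points requiring a word of care are (i) invoking that the maximum size of a pinnacle set of $P_n$ is exactly $\lceil n/2\rceil$ so that the sum indeed runs over all pinnacle sets, and (ii) the boundary convention $\binom{n-1}{-1}=0$ that makes the $k=1$ term equal $1$ and kills the leftover negative term. Everything else is the standard telescoping identity $\sum_{k=1}^{m}\bigl(a_{k-1}-a_{k-2}\bigr)=a_{m-1}-a_{-1}$ applied with $a_j=\binom{n-1}{j}$ and $m=\lceil n/2\rceil$. An entirely parallel argument (already indicated for $C_n$ in Corollary~\ref{couting all pinnacle sets of Cn}) could be used as a sanity check.
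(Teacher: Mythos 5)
Your proposal is correct and matches the paper's approach exactly: the paper's proof is the one-line observation that the result follows from Corollary~\ref{cor:count pinnacles of path graph} by summing over $1\leq k\leq \lceil n/2\rceil$, and your write-up simply makes the telescoping and the boundary convention $\binom{n-1}{-1}=0$ explicit.
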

\begin{proof}
    This follows directly from Corollary \ref{cor:count pinnacles of path graph} and taking a sum over $1\leq k\leq \lceil\frac n2\rceil$.
\end{proof}

\section{Future work}\label{sec:future}

Given our introduction of the pinnacle sets of a graph along with their orderly structure, much remains unknown about these sets. In this section we present some directions for further study.

Having characterized when a graph has a pinnacle set of a given size, an initial natural question is: 
 \begin{question}
     Given a graph and a graph operation, how does the size of a pinnacle set change?
 \end{question}
 We provide a result when we delete an edge of a graph. 
 \begin{lemma}
     If $S$ is a pinnacle set of $G$, then for any edge $e$ of $G$, the pinnacle set $T$ of $G-\{e\}$ satisfies $|S|\leq  |T|\leq |S|+1$.
 \end{lemma}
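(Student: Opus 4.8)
The plan is to fix a labeling $\lambda$ of $G$ with $\Pin(G,\lambda)=S$ and then view this \emph{same} labeling on $G-\{e\}$; write $T=\Pin(G-\{e\},\lambda)$, which is by definition a pinnacle set of $G-\{e\}$. Write $e=\{u,v\}$ and, without loss of generality, assume $\lambda(u)<\lambda(v)$. The argument splits into the lower bound and the upper bound.

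For the lower bound $|S|\le |T|$, the key observation is that deleting an edge only shrinks neighborhoods. If $\lambda(w)$ is a pinnacle of $(G,\lambda)$, then $\lambda(w)$ exceeds the label of every vertex in $N_G(w)$, hence a fortiori it exceeds the label of every vertex in $N_{G-\{e\}}(w)\subseteq N_G(w)$; so $\lambda(w)$ is still a pinnacle of $(G-\{e\},\lambda)$. Therefore $S\subseteq T$, and in particular $|S|\le |T|$.

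For the upper bound, I would show that at most one new pinnacle can appear, namely $\lambda(u)$. Any vertex $w\notin\{u,v\}$ has exactly the same neighborhood in $G$ and in $G-\{e\}$, so its pinnacle status is unchanged. For $v$: since $\lambda(v)>\lambda(u)$, the vertex $u$ was never an obstruction to $\lambda(v)$ being a pinnacle, so removing $u$ from $v$'s neighborhood does not change whether $\lambda(v)$ is a pinnacle; thus $v$'s status is unchanged too. Hence the only vertex whose status can change is $u$, and by the lower-bound reasoning its status can only change from non-pinnacle to pinnacle. Consequently $T\subseteq S\cup\{\lambda(u)\}$, giving $|T|\le |S|+1$, and together with $|S|\le|T|$ this is the claim. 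There is no genuinely hard step here; the only point needing care is recognizing that the larger endpoint $v$ of the deleted edge is completely unaffected, so the change is one-sided and of size at most one.
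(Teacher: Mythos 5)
Your proposal is correct and follows essentially the same route as the paper: fix a labeling realizing $S$, observe that deleting an edge only shrinks neighborhoods so every old pinnacle survives, and note that the only vertex whose status can change is the smaller-labeled endpoint of $e$, which can only gain pinnacle status. Your version is in fact slightly cleaner, since you avoid the paper's case split on whether the larger-labeled endpoint was a pinnacle by noting directly that its status is unaffected.
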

 \begin{proof}
     Consider an arbitrary edge $e=(u,v)$ of $G$ with $u$ labeled $a$ and $v$ labeled $b$, and assume without loss of generality that $a<b$. 
     Then, either both $a$ and $b$ are non-pinnacles, or exactly one of them is a pinnacle. 
     In the first case, as $a<b$, we must have a vertex $w$ adjacent to $v$ with label $x$ satisfying $x>b$, which makes $v$ a non-pinnacle. Deleting edge $e$, means that $b$ remains a non-pinnacle as its corresponding vertex $v$ is still adjacent to vertex $w$ with label $x$. However, label $a$ may have now become a pinnacle. This implies that $|S|\leq|T|\leq |S|+1$, as desired.
     In the second case, we assume $b$ is a pinnacle. Then deleting edge $e$ continues to ensure that $b$ is a pinnacle. Once again label $a$ may have now become a pinnacle. This again implies that $|S|\leq|T|\leq |S|+1$, as desired.
 \end{proof}
 One might begin this study by considering the graph operations of subdividing an edge, edge contraction, and the join of two graphs. 
    
In Theorem \ref{thm:count pinnacle sets} we gave a formula for the number of pinnacle sets of size $k$ provided there is a unique bottom pinnacle set.
However, as we showed in Figure~\ref{fig:two bottom pinnacles} not all graphs have a unique bottom pinnacle set, and hence Theorem~\ref{thm:count pinnacle sets} does not hold. So the following questions remain:
\begin{question}
    What must be true of $G$ and $k$ such that $\Poset(G,k)$ has a unique bottom pinnacle set of size $k$?
\end{question}
Or more generally:
\begin{question}
    Fix $b\geq 1$. What must be true of $G$ and $k$ such that $\Poset(G,k)$ has precisely  $b$ bottom pinnacle sets of size $k$? In this case, how many distinct pinnacle sets of size $k$ does $G$ have?
\end{question}
One place to begin this study is to consider the bottom pinnacle set(s) of trees and determining if there is a unique bottom element. Another consideration is determining graph families that have many bottom pinnacle sets.  
We can also ask questions regarding the number of labelings with a given pinnacle set.
 \begin{question}
     Given a graph $G$ with $n$ vertices and a subset $S\subseteq[n]$, how many labelings of $G$ have pinnacles set $S$? 
 \end{question}

We conclude by answering this question for a cycle graph on $n$ vertices and when $S$ consists of the largest $k$ integers in $[n]$ . To formally state this result we recall that a \textit{composition} of an integer $s$ with $t$ parts is an ordered $t$-tuple  $(a_1,a_2,\ldots,a_t)$ with strictly positive integer entries whose coordinate sum equals $s$. Namely $a_i>0$ for all $i\in[t]$ and $\sum_{i=1}^t a_t=s$.
Let $\mathrm{comps}(s,t)$ denote the set of compositions of $s$ into $t$ parts. 

\begin{theorem}
    Let $k\leq \lfloor\frac{n}{2}\rfloor$ and let \(S=[n-k+1,n]\) be a size $k$ pinnacle set of the cycle graph \(C_n\). 
    Then the number of labelings of $C_n$ with pinnacle set $S$ is given by \[(k-1)!\cdot n\cdot 2^{n-2 k}\cdot\sum_{(a_1,a_2,\ldots,a_k)\in \mathrm{comps}(n-k,k)
    }\binom{n-k}{a_1,a_2,\dots,a_{k}}.\]
\end{theorem}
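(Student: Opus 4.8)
The plan is to set up a structural correspondence between labelings of $C_n$ with pinnacle set $S=[n-k+1,n]$ and data consisting of: a choice of which $k$ vertices carry the ``big'' labels $n-k+1,\dots,n$, an assignment of the ``small'' labels $1,\dots,n-k$ to the remaining vertices, and the internal orderings within the resulting arcs. First I would observe, via Proposition~\ref{observe1}, that in any such labeling the $k$ vertices carrying labels from $S$ form an independent set $D$ of $C_n$; since every neighbor of such a vertex is then a small vertex with a strictly smaller label, every vertex of $D$ is automatically a pinnacle. Deleting $D$ from $C_n$ leaves $k$ vertex-disjoint paths — the \emph{gaps} — whose sizes $(a_1,\dots,a_k)$, read off in cyclic order, form a composition of $n-k$ into $k$ positive parts; such compositions exist precisely because $k\le\lfloor n/2\rfloor$. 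Thus the count decomposes as a sum over $D$ of (the number of ways to place the big labels on $D$) times (the number of ways to place the small labels on the gaps so that no small vertex is a pinnacle).

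The combinatorial heart is a count for a single gap. Both endpoints of a gap are adjacent to a vertex of $D$, hence are never pinnacles regardless of their labels; an interior vertex of a gap is a pinnacle if and only if it is a strict local maximum of the sequence of labels read along the gap. So I need the number of linear orderings of a set of $a$ distinct values having no interior peak, and I would prove by a short induction that this number is $2^{a-1}$: the maximum value must occupy one of the two ends (otherwise it is an interior peak), deleting it leaves a gap of size $a-1$ subject to the same no-interior-peak condition, and for $a\ge 2$ the two end-placements are disjoint. Consequently, for a fixed $D$ with gap sizes $(a_1,\dots,a_k)$, the number of valid small-label placements is $\binom{n-k}{a_1,\dots,a_k}\prod_{i}2^{a_i-1}=\binom{n-k}{a_1,\dots,a_k}\,2^{n-2k}$ since $\sum_i(a_i-1)=n-2k$; multiplying by the $k!$ placements of the big labels on $D$ gives $k!\,2^{n-2k}\binom{n-k}{a_1,\dots,a_k}$ labelings for that $D$.

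It then remains to evaluate $\sum_{D}\binom{n-k}{a_1(D),\dots,a_k(D)}$, the sum over all independent $k$-sets $D$ of $C_n$ of the multinomial ``gap weight.'' Here I would use that the multinomial coefficient depends only on the \emph{multiset} of gap sizes, so it is a well-defined function $M(D)$ of $D$ alone, and then double-count pairs $(D,r)$ with $r\in D$: reading the gap sequence clockwise from $r$ gives a bijection between such pairs and $\{1,\dots,n\}\times\mathrm{comps}(n-k,k)$, since a start position together with a composition reconstructs a unique $(D,r)$ (the arc lengths satisfy $\sum_i(a_i+1)=n$). Summing $M$ across this bijection in the two obvious ways yields $k\sum_{D}M(D)=n\sum_{(a_1,\dots,a_k)\in\mathrm{comps}(n-k,k)}\binom{n-k}{a_1,\dots,a_k}$, so $\sum_D M(D)=\tfrac{n}{k}\sum_{\mathrm{comps}}\binom{n-k}{a_1,\dots,a_k}$. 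Assembling the pieces, the total number of labelings is $k!\,2^{n-2k}\sum_D M(D)=(k-1)!\,n\,2^{n-2k}\sum_{(a_1,\dots,a_k)\in\mathrm{comps}(n-k,k)}\binom{n-k}{a_1,\dots,a_k}$, as claimed. The main obstacle is more a bookkeeping hazard than a genuine difficulty: one must be careful that nothing is double- or under-counted in passing from independent sets $D$ to compositions, and it is precisely the marked-vertex device, combined with the symmetry of the multinomial, that converts the natural $k!$ into the stated $(k-1)!\cdot n$.
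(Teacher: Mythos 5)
Your proposal is correct, and its skeleton matches the paper's: identify the pinnacle positions as an independent set, cut the cycle into $k$ gaps whose sizes form a composition of $n-k$, distribute the small labels via a multinomial coefficient, and count no-interior-peak arrangements within a gap of size $a$ as $2^{a-1}$. The differences are in two places. First, your proof of the $2^{a-1}$ count (the maximum must sit at an end; induct on the remaining $a-1$ values) is a different and somewhat cleaner induction than the paper's, which places the \emph{smallest} value at position $j$ and sums $\sum_j\binom{a-1}{j-1}=2^{a-1}$. Second, and more substantively, the factor $(k-1)!\cdot n$ arises differently: the paper anchors the label $n$ at a fixed vertex, permutes the remaining $k-1$ pinnacle labels, and multiplies by $n$ for the rotations, whereas you sum over all independent $k$-sets $D$, pick up a full $k!$ for assigning the big labels, and then evaluate $\sum_D\binom{n-k}{a_1(D),\dots,a_k(D)}$ by double-counting marked pairs $(D,r)$ against $[n]\times\mathrm{comps}(n-k,k)$, which yields the $\tfrac{n}{k}$ that converts $k!$ into $(k-1)!\cdot n$. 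Your version makes explicit (via the symmetry of the multinomial in the rotated composition) a point the paper's anchor-and-rotate step leaves implicit, at the cost of an extra bijection; both are valid, and the bookkeeping hazard you flag is exactly where the paper's write-up is terser than yours.
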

\begin{proof}
As in Figure~\ref{fig:labelingCn}, order the vertices $\{v_i\}_{i=1}^n$ of $C_n$ clockwise with $v_n$ placed at the top of the circle, and followed by $v_1$.
   We know that pinnacles cannot be adjacent, so we first select the position at which the pinnacles will be placed. To begin we position the pinnacle $n$, at the vertex $v_n$. 
   To know where the remainder of the pinnacles lie, it is equivalent to know the number of vertices between any two pinnacles. Since there are $k$ pinnacles, there are $k$ sets of vertices who lie completely between any two pinnacles. We call those sets of vertices lying between two pinnacles the set of gaps. 

   Moreover, since there are $k$ pinnacles, there are $n-k$ non-pinnacle values to place in those gaps. Each composition of $n-k$ with exactly $k$ parts gives all of the possible sizes of the gaps, and the way of selecting non-pinnacle numbers to place within the gaps is given by a multinomial. 
   Thus the number of ways of selecting the size of the gaps and selecting among the non-pinnacle numbers to fill those gaps with is given by 
   \[\sum_{(a_1,a_2,\ldots,a_k)\in\mathrm{comps}(n-k,k)}\binom{n-k}{a_1,a_2,\dots,a_{k}}.\]
   Now for each composition $(a_1,a_2,\ldots,a_k)$ of $n-k$ and the choice of what non-pinnacle numbers go into each gap, we must arrange these numbers so as to not create any pinnacles within the gaps. 
   For each gap we claim this can be done in $2^{a_i-1}$ ways. 
To see this, notice that since the $a_i$ numbers are all distinct, you can place the smallest number at the $j$th vertex on that path of vertices (where $1\leq j\leq a_i$). Then you must select $j-1$ numbers to place to the left and order them in decreasing order. This can be done in $\binom{a_i-1}{j-1}$ ways. 
The remaining numbers must be placed to the right of where we placed the smallest value and all placed in increasing order. There is only one way to do this. Now we must account for all of the possible places we can put the smallest number, hence the total number of ways to arrange the $a_i$ numbers so as to not create any pinnacles is given by 
\[\sum_{j=1}^{a_i}\binom{a_i-1}{j-1}=2^{a_i-1}\]
   as claimed.
   
   Once we know what numbers go in each gap, arranging the numbers in each gap so as to not create any pinnacles can be done independently. Hence, for any composition, the total number of ways to arrange the non-pinnacle values is given by \[\prod_{i=1}^k 2^{a_i-1}=2^{n-2k}.\]

   To complete the proof we note that we can permute the position of the pinnacles, not including the pinnacle $n$, which  we anchored at vertex $v_n$. This can then be done in $(k-1)!$ ways. 
   To conclude we note that we can rotate the labeling in order to account for all possible locations of where to place $n$ within the cycle graph. This can be done in $n$ ways. 
   Taking these two factors into account gives the final count for the total number of labelings of $C_n$ with pinnacle set $[n-k+1,n]$:
   \[(k-1)!\cdot n\cdot 2^{n-2 k}\cdot\sum_{(a_1,a_2,\ldots,a_k)\in\mathrm{comps}(n-k,k)}\binom{n-k}{a_1,a_2,\dots,a_{k}}.\qedhere\]
\end{proof}

\section*{Acknowledgements}
P.~E.~Harris was partially supported through a Karen Uhlenbeck EDGE Fellowship. S.~Lasinis was partially supported through a SURF award at the University Wisconsin, Milwaukee.

    \bibliographystyle{plain}
    \bibliography{refs}

\begin{thebibliography}{10}

\bibitem{albertson96distlabeling}
Michael Albertson and Karen Collins.
\newblock Symmetry breaking in graphs.
\newblock {\em The Electronic Journal of Combinatorics [electronic only]},
  3(1):R18, 1996.

\bibitem{bagga2015new}
Jay Bagga, Laure~Pauline Fotso, Max~Pambe Biatch', and S~Arumugam.
\newblock New classes of graceful unicyclic graphs.
\newblock {\em Electronic Notes in Discrete Mathematics}, 48:27--32, 2015.

\bibitem{biatch2020survey}
Max~Pambe Biatch’, Jay~S Bagga, and S~Arumugam.
\newblock A survey and a new class of graceful unicylic graphs.
\newblock {\em AKCE International Journal of Graphs and Combinatorics},
  17(2):673--678, 2020.

\bibitem{billey2012permutations}
Sara Billey, Krzysztof Burdzy, and Bruce Sagan.
\newblock Permutations with given peak set.
\newblock {\em arXiv preprint arXiv:1209.0693}, 2012.

\bibitem{bloom1985graceful}
Gary~S. Bloom and D~Frank Hsu.
\newblock On graceful directed graphs.
\newblock {\em SIAM Journal on Algebraic Discrete Methods}, 6(3):519--536,
  1985.

\bibitem{castro2013number}
Francis Castro-Velez, Alexander Diaz-Lopez, Rosa Orellana, Jos\'{e} Pastrana,
  and Rita Zevallos.
\newblock The number of permutations with the same peak set for signed
  permutations.
\newblock {\em J. Comb.}, 8(4):631--652, 2017.

\bibitem{cheng2006computing}
Christine~T Cheng.
\newblock On computing the distinguishing numbers of trees and forests.
\newblock {\em The Electronic Journal of Combinatorics}, pages R11--R11, 2006.

\bibitem{davis2018pinnacle}
Robert Davis, Sarah~A Nelson, T~Kyle Petersen, and Bridget~E Tenner.
\newblock The pinnacle set of a permutation.
\newblock {\em Discrete Mathematics}, 341(11):3249--3270, 2018.

\bibitem{diaz2017peaks}
Alexander Diaz-Lopez, Lucas Everham, Pamela~E. Harris, Erik Insko, Vincent
  Marcantonio, and Mohamed Omar.
\newblock Counting peaks on graphs.
\newblock {\em Australas. J. Combin.}, 75:174--189, 2019.

\bibitem{diaz2021formula}
Alexander Diaz-Lopez, Pamela~E Harris, Isabella Huang, Erik Insko, and Lars
  Nilsen.
\newblock A formula for enumerating permutations with a fixed pinnacle set.
\newblock {\em Discrete mathematics}, 344(6):112375, 2021.

\bibitem{peakpolyproof2017}
Alexander Diaz-Lopez, Pamela~E. Harris, Erik Insko, and Mohamed Omar.
\newblock A proof of the peak polynomial positivity conjecture.
\newblock {\em J. Combin. Theory Ser. A}, 149:21--29, 2017.

\bibitem{DLsignedperms2017}
Alexander Diaz-Lopez, Pamela~E. Harris, Erik Insko, and Darleen Perez-Lavin.
\newblock Peak sets of classical {C}oxeter groups.
\newblock {\em Involve}, 10(2):263--290, 2017.

\bibitem{domagalski2022pinnacle}
Rachel Domagalski, Jinting Liang, Quinn Minnich, Bruce~E Sagan, Jamie Schmidt,
  and Alexander Sietsema.
\newblock Pinnacle set properties.
\newblock {\em Discrete Mathematics}, 345(7):112882, 2022.

\bibitem{edwards2006survey}
Michelle Edwards and Lea Howard.
\newblock A survey of graceful trees.
\newblock {\em Atlantic Electronic Journal of Mathematics}, 1(1):5--30, 2006.

\bibitem{falque2021pinnacle}
Justine Falque, Jean-Christophe Novelli, and Jean-Yves Thibon.
\newblock Pinnacle sets revisited.
\newblock {\em arXiv preprint arXiv:2106.05248}, 2021.

\bibitem{gallian2018dynamic}
Joseph~A. Gallian.
\newblock A dynamic survey of graph labeling.
\newblock {\em Electronic Journal of Combinatorics}, 1(DynamicSurveys):DS6,
  2018.

\bibitem{golomb1972number}
Solomon~W Golomb.
\newblock How to number a graph.
\newblock In {\em Graph theory and computing}, pages 23--37. Elsevier, 1972.

\bibitem{gonzalez2023pinnacle}
Nicolle Gonz{\'a}lez, Pamela~E Harris, Gordon~Rojas Kirby, Mariana Smit~Vega
  Garcia, and Bridget~Eileen Tenner.
\newblock Pinnacle sets of signed permutations.
\newblock {\em Discrete Mathematics}, 346(7):113439, 2023.

\bibitem{mesas}
Nicolle González, Pamela~E. Harris, Gordon~Rojas Kirby, Mariana Smit~Vega
  Garcia, and Bridget~Eileen Tenner.
\newblock Mesas of stirling permutations, 2023.

\bibitem{NPchapter}
Richard Karp.
\newblock Reducibility among combinatorial problems (1972).
\newblock In {\em Ideas that created the future---classic papers of computer
  science}, pages 349--356. MIT Press, Cambridge, MA, [2021] \copyright 2021.

\bibitem{OEIS}
{OEIS Foundation Inc.}
\newblock The {O}n-{L}ine {E}ncyclopedia of {I}nteger {S}equences, 2023.
\newblock Published electronically at \url{http://oeis.org}.

\bibitem{robeva2011extensive}
Elina Robeva.
\newblock An extensive survey of graceful trees.
\newblock {\em Undergraduate Honours Thesis, Standford University, USA}, 2011.

\bibitem{rusu2021admissible}
Irena Rusu and Bridget~Eileen Tenner.
\newblock Admissible pinnacle orderings.
\newblock {\em Graphs and Combinatorics}, 37:1205--1214, 2021.

\bibitem{shivarajkumar2021graceful}
Shivarajkumar, M.~A. Sriraj, and S.~M. Hegde.
\newblock Graceful labeling of digraphs—a survey.
\newblock {\em AKCE International Journal of Graphs and Combinatorics},
  18(3):143--147, 2021.

\bibitem{truszczynski1984graceful}
Mirostaw Truszczynski.
\newblock Graceful unicyclic graphs.
\newblock {\em Demonstratio Mathematica}, 17(2):377--388, 1984.

\end{thebibliography}

\end{document}